\documentclass[12 pt,leqno]{amsart}

\usepackage[all]{xy}
\usepackage {amsfonts}
\usepackage{amsthm}
\usepackage{amssymb}
\usepackage{latexsym}
\usepackage{amsmath}
\usepackage{comment}
\usepackage{color}
\usepackage{a4wide}

\newenvironment{rlist}
{

\begin{enumerate}}
{\end{enumerate}}

\newtheorem{thmx}{Theorem}

\newtheorem{corx}[thmx]{Corollary}
\pagestyle{plain}

\theoremstyle{plain}
\newtheorem{thm}{Theorem}[section]

\newtheorem {lem} [thm]{Lemma}
\newtheorem {prop}[thm] {Proposition}

\newtheorem{cor}[thm]{Corollary}

\theoremstyle{definition}
\newtheorem {defn}[thm] {Definition}
\newtheorem {rem} [thm]{Remark}
\newtheorem{ex}[thm]{Example}

\newcommand{\eqn}{\begin{equation}}
\newcommand{\eqne}{\end{equation}}

\DeclareMathOperator{\supp}{supp}
\DeclareMathOperator{\Bis}{Bis}

\newcommand{\Hilt}{\tilde{\Hil}}


\newcommand{\Ind}{\mathcal I}
\newcommand{\B}{\mathcal B}
\newcommand{\LL}{\mathcal L}

\newcommand{\G}{ G} 
\newcommand{\Gz}{{\G^{(0)}}} 

\newcommand{\Hil}{H}

\newcommand{\K}{\mathcal{K}}

\renewcommand{\L}{\mathcal L}

\newcommand{\A}{\mathcal A}

\newcommand{\C}{\mathbb C}
\newcommand{\R}{\mathbb R}
\newcommand{\Z}{\mathbb Z}
\newcommand{\N}{\mathbb N}
\newcommand{\T}{\mathbb T}
\newcommand{\bu}{\mathcal B}

\newcommand{\cst}{\ifmmode\mathrm{C}^*\else{$\mathrm{C}^*$}\fi}

\newcommand*{\onto}{\twoheadrightarrow}


\begin{document}
	\author{Bartosz K. Kwa\'sniewski}
	\address{Bartosz K. Kwa\'sniewski, Faculty of Mathematics, University of Bia\l ystok, ul. K. Cio\l kowskiego 1M, 15-245
	Bia\l ystok, Poland}
	\email{bartoszk@math.uwb.edu.pl}
	\author{Kang Li}
	\address{Kang Li, Institute of Mathematics of the Polish Academy of Sciences, ul.\ \'Sniadeckich 8, 00-656 Warszawa, Poland}
	\curraddr{Department of Mathematics, 
		Friedrich-Alexander-Universität Erlangen-N\"urnberg (FAU),
		Cauerstra\ss e 11, 91058 Erlangen, Germany}
	\email{kang.li@fau.de}
	\author{Adam Skalski}
	\address{Adam Skalski, Institute of Mathematics of the Polish Academy of Sciences, ul.\ \'Sniadeckich 8, 00-656 Warszawa, Poland}
	\email{a.skalski@impan.pl}

\title{\bf The Haagerup property for twisted groupoid dynamical systems}

\begin{abstract}

We introduce the Haagerup property for  twisted groupoid $C^*$-dynamical systems in terms of naturally defined positive-definite operator-valued multipliers. 
By developing a version of `the Haagerup trick' we prove that this property is equivalent to the  Haagerup property 
 of  the  reduced crossed product $C^*$-algebra with respect  to the canonical conditional expectation $E$. This extends a theorem of
Dong and Ruan for discrete group actions, and  implies that a given Cartan inclusion of separable $C^*$-algebras has the 
Haagerup  property if and only if the associated Weyl groupoid has the Haagerup property in the sense of Tu. We use the latter statement to prove that every separable $C^*$-algebra which has the Haagerup  property with respect to some Cartan subalgebra   satisfies the  Universal Coefficient Theorem. This  generalises a recent result of Barlak and Li on the UCT for nuclear Cartan pairs.

\end{abstract}

\subjclass[2010]{Primary 46L05; Secondary  22A22, 43A22, 46L55}

\keywords{\'etale groupoids; twisted groupoid crossed products; Haagerup property; UCT problem; Cartan subalgebras}

\date{\today} 
\maketitle

\section{Introduction}

One of the most active areas of research at the intersection of geometric group theory and the theory of operator algebras in recent years is the study of interactions between the geometric properties of a group and analytic properties of the associated $C^*$- or von Neumann algebras. 
The prototypical example is the equivalence of  amenability of a discrete group $\Gamma$ and the nuclearity of the group $C^*$-algebra $C^*_r(\Gamma)$ or injectivity of the group von Neumann algebra $VN(\Gamma)$, see e.g. \cite{BO}. 
Similarly, the Haagerup property  for $\Gamma$ can be characteristed  by approximation properties
of $C^*_r(\Gamma)$ or  $VN(\Gamma)$,  
see \cite{BO, book, choda, Dong}.
 The aforementioned correspondences, at a technical level,  are achieved in two stages: first one encodes some property of $\Gamma$ in terms of the existence of certain functions on $\Gamma$, and then one shows that an abstract approximation property of the associated operator algebra can be 
 achieved with the approximating maps being Herz-Schur multipliers, i.e.\ maps determined by functions on $\Gamma$. This second step is sometimes called `averaging maps into Herz-Schur multipliers', and is essentially based on a simple but very efficient trick attributed to Haagerup (see \cite{Haagerup}). 

With the increased interest in various  constructions generalising the group operator algebras there is a need to develop and apply appropriate versions of the above procedure in other contexts. This has been done,  for example, for (unimodular) discrete quantum groups (\cite{KrausRuan}, \cite{dfsw}) and for $C^*$-algebraic crossed products (\cite{mstt}, \cite{dong_ruan}). The latter case shows an interesting dichotomy: if one is interested directly in the Haagerup property of the crossed product by an action on a $C^*$-algebra $A$, one has to allow a broader class of analogues of Herz-Schur multipliers, sometimes called mapping multipliers (\cite{mtt}, \cite{BedosConti2}), whereas if one wants to consider only the $A$-valued multipliers, one arrives rather at a relative Haagerup property of the crossed product with respect to $A$ viewed as a subalgebra, as defined in \cite{dong_ruan}.

Recent years have also brought a renewed interest in $C^*$-algebras associated to twisted \'etale groupoids and their actions. This is partially due to the fact that such structures lead to many new interesting examples of $C^*$-algebras, but also due to a certain universality: the celebrated result of Renault from \cite{Re} says that any Cartan pair $(B,A)$, i.e. a $C^*$-algebra $B$ and a regular 
 maximal abelian subalgebra $A\subseteq B$ that admits a faithful conditional expectation, comes from a (unique) twisted-groupoid model. Also Cartan pairs are ubiquitous. For example  Li proves in \cite{XinLi}  that any simple classifiable $C^*$-algebra admits a Cartan subalgebra, so also a twisted groupoid model. 
On the other hand Barlak and Li show in  \cite{BL} that  nuclear algebras admitting Cartan subalgebras satisfy the UCT. 
They  exploit  results of Tu (\cite{Tu}) that connect the UCT property for \'etale groupoid $C^*$-algebras with what we call the Haagerup property of a groupoid. Our first initial motivation was  to generalise the result of \cite{BL} by replacing  nuclearity with a weaker property of a Cartan pair, equivalent to the Haagerup property of the associated Weyl groupoid. 
A relevant variant of the weaker property mentioned above was introduced by Dong and Ruan  \cite{dong_ruan}. 
The authors of \cite{dong_ruan}
defined the relative Haagerup property for a unital $C^*$-inclusion $A\subseteq B$ and characterised it when $B=A\rtimes_r \Gamma$ is 
the reduced crossed product, of  a discrete group action $\Gamma$ on a unital $C^*$-algebra $A$, via $Z(A)$-valued positive-definite  multipliers defined on $\Gamma$. Thus our second key motivation was to extend this result to the twisted actions of locally compact \'etale groupoids on not necessarily unital $C^*$-algebras. The necessity to consider the twist comes from the fact that Renault's theorem involves twisted groupoids.
However, it suffices to consider only \emph{Kumjian's twists} by a circle bundle \cite{Kumjian0}, 
and in the present paper we restrict ourselves  to such twists. Again a central role in the proofs is based on a suitable version of the Haagerup Trick.

In particular we establish the following two results (see Theorems \ref{FH}, \ref{uct for crossed product})  that are far reaching generalisations of  the main results of \cite{dong_ruan} and \cite{BL}, and  apply to $C^*$-algebras admitting  Cartan subalgebras (see Corollary \ref{GammaCartan pair}). Pertinent definitions of the  Haagerup  property can be found in Definitions \ref{def:Haagerup_for_twisted_actions} and \ref{defn:E-Haagerup} below.

\begin{thmx}\label{Theorem_A}
	Let $B$ be the reduced crossed product of a twisted action 
	$(\A,\G,\Sigma,\alpha)$ of an \'etale locally compact Hausdorff groupoid $\G$ on 
	the $C^*$-algebra $A:=C_0(\A)$ where $\A$ has a continuous unit section. Denote   the canonical expectation from $B$ to $A$ by $E$.
	Then  $B$ has the $E$-Haagerup property, if and only if $(\A,\G,\Sigma,\alpha)$ has the  Haagerup  property.
\end{thmx}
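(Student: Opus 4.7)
The plan is to prove both implications separately, with the bridge being a multiplier construction that sends a positive-definite $\A$-valued function $h$ on $\Sigma$ (with the appropriate $\T$-equivariance under the central circle action of the Kumjian twist) to a completely positive $A$-bimodular map $\Phi_h\colon B\to B$, defined on the dense $*$-subalgebra $C_c(\Sigma;\A)$ by pointwise multiplication of convolution kernels. This recipe should specialise to the classical Schur/Herz--Schur multiplier construction when $\G$ is a discrete group and the twist is trivial, thereby recovering the Dong--Ruan theorem in that case.

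For the forward direction I would take a net $(h_i)$ of positive-definite multipliers witnessing the Haagerup property of $(\A,\G,\Sigma,\alpha)$ and verify three properties. First, the formula $\Phi_{h_i}$ extends from $C_c(\Sigma;\A)$ to a completely positive, $A$-bimodular, $E$-preserving contraction on the reduced crossed product; complete positivity is deduced from pointwise positive-definiteness via a disintegration of $E$ over the unit space $\Gz$, analogous to the Schur-multiplier argument in the discrete group case. Second, the convergence $h_i\to 1$ uniformly on compact subsets of $\Sigma$ forces $\Phi_{h_i}(b)\to b$ in norm on the dense subalgebra, and hence everywhere by contractivity. Third, the $C_0$-type decay of $h_i$ translates into $E$-compactness of $\Phi_{h_i}$ by recognising the latter as a ``diagonal'' multiplier on the Hilbert $A$-module obtained from $B$ via $E$.

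The converse is the harder direction. Starting from a net $(\Phi_i)$ of $A$-bimodular, completely positive, $E$-compact maps with $\Phi_i\to\tu{id}$ pointwise, the central idea is the twisted-groupoid Haagerup trick: one extracts a candidate multiplier $h_i\colon\Sigma\to\A$ by evaluating $\Phi_i$ on compactly supported sections concentrated on bisections through the range of a given $\sigma\in\Sigma$ and reading off the coefficient at $\sigma$ via $E$, using $A$-bimodularity of $\Phi_i$ to ensure independence of the choice of approximating section. Positive-definiteness of $h_i$ is then an algebraic consequence of complete positivity of $\Phi_i$ applied to finite sums of the form $\sum_j a_j u_{\sigma_j}$; the $C_0$-decay is inherited from $E$-compactness through the correspondence picture of the $E$-compact operators; and $h_i\to 1$ on compacts follows from $\Phi_i\to\tu{id}$ applied to a suitable approximate unit in $C_c(\Sigma;\A)$. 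The main obstacle will be showing that the extracted $h_i$ is a genuine continuous, $\T$-equivariant $\A$-valued section on $\Sigma$ rather than a merely fibrewise-defined Borel object; this will demand careful use of the \'etale structure of $\G$ to produce enough local bisections around every point, combined with an equicontinuity argument that propagates the continuity of $\Phi_i$ and $E$ to the extracted multiplier.
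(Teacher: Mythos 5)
Your proposal follows essentially the same route as the paper: positive-definite multipliers yield completely positive $A$-bimodular maps via a Schur-multiplier/Stinespring-type argument (with the locally-$C_0$ decay matched to local $E$-compactness through the Hilbert module $B_E$), and the converse is obtained by the twisted Haagerup trick of evaluating $\Phi_i$ on unit-like sections supported on bisections and reading off the coefficient via $E$. The only cosmetic difference is that you phrase the multipliers as $\T$-equivariant $\A$-valued functions on $\Sigma$, whereas the paper works with sections of the pullback bundle $\A*_r\G$ over $\G$ and encodes the twist in the positive-definiteness condition via a local choice of section of $\Sigma\to\G$; these formulations are equivalent.
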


\begin{thmx}\label{Theorem_B}
	For any continuous separable saturated   Fell bundle $\B$ over a second countable locally compact Hausdorff \'etale groupoid $\G$ with the Haagerup property
	such that $C^*(\bu|_{\Gz})$  is of type I, both the reduced $C^*$-algebra $C^*_r(\bu)$ and the full $C^*$-algebra $C^*(\bu)$ satisfy the UCT.
\end{thmx}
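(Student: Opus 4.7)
The plan is to reduce Theorem~\ref{Theorem_B} to Tu's theorem \cite{Tu} on the UCT for $C^*$-algebras of second countable Hausdorff \'etale groupoids with the Haagerup property, by producing a twisted groupoid model of $\bu$. The type I hypothesis on $A := C^*(\bu|_{\Gz})$, together with separability and continuity, realises $A$ as a continuous $C_0(\Gz)$-algebra whose spectrum is a second countable locally compact Hausdorff space $X$ with a continuous open surjection $p\colon X\to\Gz$. Disintegrating the Fell bundle multiplication in the style of Muhly--Williams and Ionescu--Williams, I expect to produce a second countable \'etale locally compact Hausdorff groupoid $\mathcal{H}$ with unit space $X$ and a Kumjian circle-bundle twist $\Sigma\to\mathcal{H}$ such that $\bu$ is equivalent, as a Fell bundle, to the complex line bundle associated with $\Sigma$. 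By the equivalence theorem for Fell bundles, this yields Morita equivalences $C^*(\bu)\sim C^*(\mathcal{H};\Sigma)$ and $C^*_r(\bu)\sim C^*_r(\mathcal{H};\Sigma)$.

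Next I would check that $\mathcal{H}$ inherits the Haagerup property from $\G$. The construction in the previous step equips $\mathcal{H}$ with a canonical continuous open surjective groupoid homomorphism $\pi\colon\mathcal{H}\to\G$ covering $p$. A conditionally negative type function witnessing the Haagerup property of $\G$ then pulls back along $\pi$ to one on $\mathcal{H}$, with properness preserved because $p$ is open with locally compact Hausdorff fibres. Equivalently, by Tu's permanence results for Haagerup groupoids, $\mathcal{H}$ and $\G$ are Morita equivalent as groupoids and the Haagerup property transfers.

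Finally, applying Tu's UCT theorem, extended to cover a Kumjian twist (which is essentially cosmetic since the proof runs with complex line bundle coefficients and the bootstrap class is closed under such twists), to $(\mathcal{H},\Sigma)$ gives the UCT for both $C^*(\mathcal{H};\Sigma)$ and $C^*_r(\mathcal{H};\Sigma)$; Morita invariance of the UCT then delivers both conclusions for $\bu$. The main obstacle is the disintegration in the first step: producing $(\mathcal{H},\Sigma)$ compatibly with all the continuity data of $\bu$ requires handling the Dixmier--Douady invariants fibrewise and ensuring continuous openness of $p$. Once this is in place, Steps 2 and 3 are essentially permanence plus citation, with the type I hypothesis being exactly what makes the disintegration into a twist by a circle bundle available in the first place.
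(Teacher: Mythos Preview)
Your approach differs substantially from the paper's. The paper does \emph{not} change the groupoid: it invokes the Stabilization Theorem of Ionescu--Kumjian--Sims--Williams \cite{stabilization} to show that $\bu$ is Morita equivalent, as a Fell bundle over the \emph{same} groupoid $\G$, to the bundle of an untwisted groupoid dynamical system $(\A,\G,\alpha)$, with $C_0(\A)$ still separable, continuous, and type~I. Renault's equivalence theorems and Tu's $KK$-equivalence between full and reduced crossed products then reduce the problem to showing the UCT for $C^*_r(\A,\G,\alpha)$, and at that point one simply quotes Tu's proper-action machinery, following \cite[Theorem~3.1]{BL}. No disintegration of the coefficient algebra, no auxiliary groupoid $\mathcal{H}$, and no reduction to a line bundle is needed; the type~I hypothesis is used only inside Tu's argument.

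Your Step~1 has a genuine gap. Type~I alone does not force the spectrum of $A$ to be Hausdorff; separable type~I $C^*$-algebras can have spectra that are merely $T_0$. The disintegration you sketch---producing from $\bu$ an \'etale groupoid $\mathcal{H}$ over the spectrum $X$ together with a Kumjian twist $\Sigma$ realising $\bu$ up to equivalence as a line bundle---is essentially the content of van~Erp--Williams \cite{EW}, and that construction requires the coefficient algebra to have \emph{continuous trace}, a strictly stronger hypothesis than type~I. So under the stated assumptions the reduction is not available. Even if one strengthened the hypothesis to continuous trace, Step~2 would still need work: the relation between $\mathcal{H}$ and $\G$ is a groupoid homomorphism covering $p\colon X\to\Gz$, not a Morita equivalence of groupoids, and the pullback along $\pi$ of a locally proper negative-type function on $\G$ need not be locally proper on $\mathcal{H}$ unless $p$ is proper---which it is not once the fibres $A_x$ have infinite spectrum. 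The paper's route via stabilisation sidesteps all of these issues.
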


\begin{corx}\label{Theorem_C}
	Let $B$ be a   separable $C^*$-algebra which admits a Cartan subalgebra $A$. If the inclusion $A \subseteq B$ has the Haagerup  property, then $B$ satisfies the UCT.
\end{corx}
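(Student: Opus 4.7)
The plan is to combine Renault's theorem, Theorem A, and Theorem B in sequence. Starting from a Cartan pair $(B,A)$ with $B$ separable, Renault's theorem provides a (unique) topologically principal second countable \'etale locally compact Hausdorff groupoid $\G$ (the Weyl groupoid), a Kumjian twist $\Sigma\to\G$ by the circle bundle, and an isomorphism $B\cong C^*_r(\G;\Sigma)$ carrying the canonical conditional expectation $E_{\G}:C^*_r(\G;\Sigma)\to C_0(\Gz)$ onto the unique faithful conditional expectation $E:B\to A$. In particular, identifying $A$ with $C_0(\A)$ where $\A:=\Gz$, we may regard $B$ as the reduced crossed product of the trivial twisted action $(\A,\G,\Sigma,\alpha)$ (with fibrewise action on the trivial line bundle $\A\times\C$), thereby placing ourselves precisely in the setting of Theorem A.

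Next I would translate the hypothesis. The assumption that $A\subseteq B$ has the Haagerup property is, by definition, the $E$-Haagerup property of $B$. Applying Theorem A to the twisted system $(\A,\G,\Sigma,\alpha)$ constructed above, this is equivalent to $(\A,\G,\Sigma,\alpha)$ having the Haagerup property. Since the action on $C_0(\Gz)$ is the canonical one, an inspection of Definition~\ref{def:Haagerup_for_twisted_actions} shows that this in turn is nothing but the Haagerup property of the groupoid $\G$ itself in the sense of Tu, so $\G$ has Tu's Haagerup property (this is the explicit intermediate statement announced in the introduction just before Theorem~A).

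Finally I would invoke Theorem B on the continuous Fell line bundle $\B$ over $\G$ associated with the Kumjian twist $\Sigma$. This bundle is separable and saturated, $\G$ is second countable, locally compact, Hausdorff, \'etale, and has the Haagerup property by the previous step. Moreover $\B|_{\Gz}$ is the trivial one-dimensional bundle $\A\times\C$, so $C^*(\B|_{\Gz})\cong C_0(\A)$ is abelian, hence of type~I. Thus Theorem B applies and yields that $C^*_r(\B)$ satisfies the UCT. Since $C^*_r(\B)\cong C^*_r(\G;\Sigma)\cong B$, we conclude that $B$ satisfies the UCT.

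I expect the only subtle step to be the careful identification of the Haagerup property of the inclusion $A\subseteq B$ (formulated for general inclusions) with the $E$-Haagerup property to which Theorem~A applies, and the verification that for the Weyl twisted groupoid this transfers to Tu's Haagerup property of $\G$; the remaining steps are essentially bookkeeping, using that the Weyl construction always produces the trivial (one-dimensional) Fell bundle on the unit space, which makes the type~I hypothesis of Theorem~B automatic.
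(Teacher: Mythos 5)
Your proposal is correct and follows essentially the same route as the paper: Renault's theorem to pass to a twisted Weyl groupoid $(\G,\Sigma)$, Theorem~A together with the identification of the trivial-line-bundle system's Haagerup property with Tu's Haagerup property of $\G$ (the paper's Lemma~\ref{lem:Haagerup for twisted groupoids} and Corollary~\ref{cor:Cartan_Haagerup}), and then Theorem~B applied to the associated continuous Fell line bundle, whose restriction to $\Gz$ is commutative and hence type~I. The paper phrases this via Proposition~\ref{uct for twisted} and the more general Corollary~\ref{GammaCartan pair} (taking $\Gamma=\{e\}$), but the substance is identical to your argument.
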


An important fact which distinguishes our work is that we treat groupoids with not neccessarily compact unit spaces.
This forces us to work with non-unital algebras and has significant consequences even at the level of the relevant definitions.
Since the proof of Theorem \ref{Theorem_B}  relies heavily on the work of Tu (in fact it is a combination of the arguments of \cite{BL} and the Stabilization Theorem from \cite{stabilization}),  our model initial condition is 
existence of  a \emph{locally proper function of negative type} in the sense of  \cite{Tu}. 
This
local properness, when translated to an approximation property of positive-definite maps,  means 
that the maps in question are only \emph{locally $C_0$} (see Proposition \ref{Haagequivgeneral}). 
Accordingly, we  adapt the relative Haagerup property  of \cite{dong_ruan}  to not-neccesarily unital  $C^*$-inclusions $A\subseteq B$, mimicking the `locally $C_0$ condition' by using the Pedersen's ideal of $A$ (see Definition \ref{defn:E-Haagerup}).

As alluded to above, the key step in proving Theorem \ref{Theorem_A} is a version of \emph{the Haagerup Trick}
that gives a systematic way of passing from general maps acting on the crossed product  to  certain functions on $\G$, which in turn lead to analogues of Herz-Schur multipliers. In the context of operator algebras associated to groupoids, generalisations of Herz-Schur multipliers were studied for example in \cite{RamsayWalter}, \cite{RenaultFourier}, \cite{Takeishi} and \cite[Subsection 5.6]{BO}. 
We assume in Theorem \ref{Theorem_A} that $\A$ has a continuous unit section only to facilitate the Haagerup trick. This is enough 
for all of our applications and allows  us to consider very concrete and natural operator-valued multipliers. 
The genuinely new element is that we are dealing with the multipliers of \emph{twisted} groupoid $C^*$-dynamical systems,
with  twisting in the sense of Green-Renault  \cite{Green}, \cite{Renault}, \cite{Renault2}.
In the group case this brings us close to some considerations in \cite{BedosConti}.
The authors of \cite{BedosConti} employ  twists by cocycles  in the spirit of  \cite{Zeller-Meier}, \cite{BusbySmith}, rather than 
that of Green \cite{Green}, but as shown in \cite{PackerRaeburn}  cocycle twists cover  those coming from the group extension by a choice of 
a global continuous section.  
This   can not be directly generalised to groupoids, as   groupoid extensions admit non-trivial continuous sections only locally.
Nevertheless, it seems that we need a  variant of this result to make the technicalities related to our framework tractable.
We found a way out by describing  twisted groupoid dynamical systems  using   suitable pictures of  inverse semigroup actions, that exploit the ideas of \cite{BussExel}, \cite{BussMeyer}, \cite{BussExelMeyer} and
\cite{BartoszRalf2}. 
We establish the relevant equivalences, describing them in detail, as we believe they are of independent interest and could be used in other contexts. 
In particular, for  \'etale groupoids  twisted crossed products in the sense of Green-Kumjian-Renault can be 
replaced by inverse semigroup  crossed products twisted by a cocycle,  as introduced in \cite{BussExel}. This is the content of the following theorem (see Theorem \ref{thm:twisted_action_groupoid_vs_inverse_semigroup}).
\begin{thmx}\label{thm_D}
	Let $(\A,\G,\Sigma,\alpha)$ be a twisted groupoid $C^*$-dynamical system with $\G$ \'etale locally compact and Hausdorff.
	There is a natural  twisted inverse semigroup action $(A,S,\beta, \omega)$ 
	(described in Lemma \ref{lem:twisted_action_groupoid_vs_inverse_semigroup}) where $A=C^*(\A)$ and $S$ is an inverse semigroup consisting of
	open bisections, and we have natural isomorphisms
	for the corresponding crossed products:
	$
	C^*(\A,\G,\Sigma, \alpha)\cong A\rtimes_{\beta}^{\omega} S$,  $C^*_r(\A,\G,\Sigma, \alpha)\cong A\rtimes_{\beta, r}^{\omega} S.
	$
\end{thmx}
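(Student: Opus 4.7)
The plan is to construct the inverse semigroup picture $(A,S,\beta,\omega)$ explicitly from local trivializations of the twist $\Sigma\to\G$, and then identify the two crossed products at the level of their canonical dense $*$-subalgebras before extending to the full and reduced $C^*$-completions.

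First I would take $S$ to be an inverse subsemigroup of $\Bis(\G)$, the inverse semigroup of open bisections $U\subseteq\G$, consisting of those bisections over which the $\T$-bundle $\Sigma\to\G$ admits a continuous section $\sigma_U\colon U\to\Sigma$. Such bisections form a basis for the topology of $\G$ because $\Sigma\to\G$ is locally trivial, and they are closed under $(U,V)\mapsto UV$ and $U\mapsto U^{-1}$ since local sections can be multiplied and inverted in $\Sigma$. For each $U\in S$, composition of the partial homeomorphism $r\circ (s|_U)^{-1}$ with $\alpha_{\sigma_U(\cdot)}$ yields an isomorphism between the ideals $D_{U^{-1}}=C_0(s(U))\cdot A$ and $D_U=C_0(r(U))\cdot A$ of $A=C_0(\A)$, and these assemble into a partial action $\beta=\{\beta_U\}_{U\in S}$. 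The hypothesis that $\A\to\Gz$ has a continuous unit section is what lets the fibrewise action of $\Sigma$ be converted into a genuine partial automorphism of $A$ rather than only a Hilbert-module equivalence.

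Second, the cocycle $\omega$ arises from comparing choices of local sections: for $U,V\in S$ and $\gamma\in UV$ decomposed as $\gamma=\gamma_1\gamma_2$ with $\gamma_1\in U$, $\gamma_2\in V$, the element $\sigma_U(\gamma_1)\sigma_V(\gamma_2)\sigma_{UV}(\gamma)^{-1}$ lies in the kernel $\T$ of $\Sigma\to\G$, giving a continuous $\T$-valued function on $UV$ that plays the role of a twisting $2$-cocycle. The groupoid extension axioms together with the associativity of $\alpha$ force $\omega$ to satisfy the $2$-cocycle identity and the intertwining relation $\beta_U\circ\beta_V=\mathrm{Ad}(\omega_{U,V})\circ\beta_{UV}$ on the relevant ideals, so that $(A,S,\beta,\omega)$ is a twisted inverse semigroup action in the formalism of \cite{BussExel}. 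I would then identify the two crossed products by matching their canonical dense $*$-subalgebras: the twisted convolution $*$-algebra of compactly supported $\Sigma$-equivariant sections decomposes, via a partition of unity subordinate to a cover by bisections in $S$, as the span of elements $f\delta_U$ with $U\in S$ and $f\in C_c(D_U)$, and the algebraic crossed product $A\rtimes_{\beta,\mathrm{alg}}^{\omega} S$ is spanned by exactly the same symbols. A direct calculation shows that the obvious assignment preserves involution and twisted convolution, with $\omega$ absorbing the ambiguities produced when a given element is written through different bisections. Universality then yields $C^*(\A,\G,\Sigma,\alpha)\cong A\rtimes_{\beta}^{\omega}S$, and for the reduced version one compares the canonical faithful conditional expectations onto $A$ on both sides; these agree on the common dense subalgebra, so their faithfulness forces the reduced norms to coincide.

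The main obstacle will be the coherent construction of $\omega$ and the verification of the twisted inverse semigroup axioms. Because $\Sigma\to\G$ only admits global continuous sections when the extension is trivial, $\omega$ must be defined bisection by bisection, and the cocycle identity as well as the compatibility with $\beta$ have to be checked on all triple products while simultaneously respecting the ideal structure of $A$ in the non-unital setting. This is exactly the technical point that the authors remark forced them to adopt the inverse semigroup viewpoint of \cite{BussExel, BussExelMeyer, BartoszRalf2}, and I would expect the bulk of the proof of Lemma \ref{lem:twisted_action_groupoid_vs_inverse_semigroup} to be devoted to these compatibilities before the isomorphism of crossed products follows cleanly by universality and conditional-expectation arguments.
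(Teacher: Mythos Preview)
Your construction of $(A,S,\beta,\omega)$ matches the paper's Lemma~\ref{lem:twisted_action_groupoid_vs_inverse_semigroup} closely, but one remark is incorrect: the existence of a continuous unit section for $\A$ is \emph{not} assumed in Theorem~\ref{thm_D} and is not needed to make $\beta_U$ a genuine partial automorphism. The maps $\beta_U\colon D_{U^*}\to D_U$ are always honest $*$-isomorphisms of ideals of $A=C_0(\A)$, simply because each $\alpha_{c_U(\gamma)}$ is a $*$-isomorphism $A_{s(\gamma)}\to A_{r(\gamma)}$; the unit-section hypothesis enters only later, in Section~\ref{Sect:HTrick}, to facilitate the Haagerup trick. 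Also note that in the general twisted system $\omega(U,V)$ is not literally $\T$-valued but rather $UM(D_{UV})$-valued, obtained by pushing the $\T$-valued comparison of sections through the twisting homomorphism $u$.

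The more substantive divergence is in how the crossed-product isomorphism is established. You propose to match dense $*$-subalgebras directly via a partition of unity, invoke universality for the full algebra, and compare faithful conditional expectations for the reduced one. This can be made to work, but you would have to handle the quotient relations in the algebraic inverse-semigroup crossed product coming from the inclusion maps $j_{s,t}$ for $t\le s$, which is where the bookkeeping gets heavy. The paper avoids this entirely: it inserts an intermediate step (Lemma~\ref{lem:equivalent actions on A}) showing that the Fell bundle over $S$ produced by $(A,S,\beta,\omega)$ is naturally isomorphic to the ``standard'' inverse-semigroup action $\bigsqcup_{U\in S}C_0(\B|_U)$ associated to the groupoid Fell bundle $\B$. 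Once that identification is in hand, the isomorphisms $C^*(\B)\cong C^*(\bigsqcup_U C_0(\B|_U))$ and $C^*_r(\B)\cong C^*_r(\bigsqcup_U C_0(\B|_U))$ are quoted from the existing literature \cite{BussExel,BussMeyer,BussExelMeyer,BartoszRalf2}, so the proof of Theorem~\ref{thm:twisted_action_groupoid_vs_inverse_semigroup} itself is only a few lines. Your route is more self-contained; the paper's is shorter and makes the point that the result is really a repackaging of known equivalences once Lemma~\ref{lem:equivalent actions on A} is established.
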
 
We use the $S$-grading of the reduced crossed product from Theorem \ref{thm_D}
to establish our version of the Haagerup Trick and study its properties (see Propositions \ref{prop:Haagerup Trick}, \ref{prop:Hilbert multipliers_fell_groupoids}). We believe  this tool will be  useful in the study of other approximation properties of groupoid crossed products.

The detailed plan of the paper is as follows: after discussing preliminary facts and notations concerning groupoid Fell bundles in the beginning of Section \ref{Sec:prelim}, we provide in the same section an extended discussion of various equivalent pictures of twisted groupoid actions and associated crossed product operator algebras, focusing on the interpretation in terms of inverse semigroup actions on Hilbert bimodules.  In Section \ref{sect:Fell bundles} we introduce various classes of Herz-Schur type multipliers for such twisted crossed products and show that positive-definite $\A$-valued functions correspond to completely positive multipliers. Section \ref{Sect:HTrick} contains a version of the Haagerup Trick in our framework: every  map $\Phi$ on the resulting crossed product algebra which is a bimodule map (with respect to the subalgebra given by the $C^*$-bundle $\A$ on which our groupoid acts)  is shown to determine in a canonical way a function $h^\Phi$ on the groupoid with values in the associated bundle. There we also establish the interplay between properties of $\Phi$ and $h^\Phi$, which are key for the following results. In Section \ref{Haagerup property} we discuss the Haagerup property for groupoids, as defined in \cite{Tu}, and its variant (equivalent in the $\sigma$-compact setting) which provides the right notion for twisted groupoid dynamical systems. Then we exploit the content of the previous sections to show one of the main results of the paper, namely the equivalence between the Haagerup  property for such a system and a relative Haagerup property of the pair of associated $C^*$-algebras. We also record in this section certain immediate corollaries for Cartan pairs. Finally in Section \ref{Sect:UCT} we present some consequences of the theorem mentioned above for the UCT property of Cartan pairs, based on the observation that the key arguments of \cite{BL} require only the Haagerup property of the associated groupoid, and not neccessarily amenability, as was used in that paper.

\section{Groupoid Fell bundles, twisted groupoid actions and their $C^*$-algebras} \label{Sec:prelim}
In this section we  establish the notation and conventions regarding groupoids and their Fell bundles.
We also discuss the concept of twisted groupoid actions on $C^*$-algebras
and relate them to certain twisted inverse semigroup actions. Further we describe the relevant convolution operator algebras and prove their isomorphism (Theorem D of the introduction).

\subsection{Banach bundles}

Throughout this paper $X$ is a locally compact Hausdorff space.  
We consider Fell bundles that are upper semicontinuous, see for instance  \cite{dupre_gillette}, \cite{BussExel}.
So by a \emph{Banach bundle over} $X$  we mean 
a topological space $\bu=\bigsqcup_{x\in X }B_x$  such that the canonical projection
 $p:\bu\onto X$ is continuous and open,  each fiber $B_x$ is a complex Banach space, and the Banach space structure is 
consistent with the topology of $\bu$ in the sense that the maps $\bu\times \bu\supseteq \bigsqcup_{x\in X }B_x\times B_x\ni (v, w) \to v + w\in \bu$  and  
$\C\times \bu\ni (\lambda,v) \to \lambda v\in \bu $  are continuous,
 but  we require the map  $\bu\ni v\mapsto  \|v\|\in \R$  only to be upper  semicontinuous. One also requires another technical condition: if $\{v_i\}_{i \in \Ind}$ is a net in $\bu$ such that $\|v_i\|\stackrel{i \in \Ind}{\longrightarrow} 0$  and $p(v_i)\stackrel{i \in \Ind}{\longrightarrow} x$ in $X$, then $v_i$ converges to $0_x$, where $0_x$ is the zero element in $B_x$. 
If the map $\bu\ni v\mapsto  \|v\|\in \R$ is continuous,
 then we say the \emph{bundle is continuous}. We will also use an analogous notion of (upper semicontinuous) $C^*$-bundles. The bundle as above is said to be \emph{locally trivial} if for any point $x \in X$ there is an open neighbourhood $U$ of $x$ in $X$ and a Banach space $\mathsf{Y}$ such that $\bu|_{U} \cong U\times \mathsf{Y}$, where the isomorphism respects the bundle structure.

In a number of sources, including the original work of Fell,  Banach bundles are continuous  by definition,
see, for instance, \cite{Fell_Doran},  \cite{Kumjian}, \cite{Takeishi}. 
However,  nowadays it seems more standard to consider upper-semicontinuous  bundles. 
Firstly, this more general setting is often very useful. Secondly,
upper semicontinuity is usually  sufficient to prove important theorems for Banach bundles. 
For instance, the reconstruction theorem \cite[Theorem II.13.18]{Fell_Doran} is generalised (to upper semicontinuous bundles) 
in \cite[Proposition 2.4]{BussExel}, and the Douady-dal Soglio-Herault theorem \cite[Appendix C]{Fell_Doran}  is generalised in \cite[Corollary 2.10]{Lazar}.
The reconstruction theorem says that whenever we have a disjoint sum   of Banach spaces $\bu=\bigsqcup_{x\in X }B_x$ and a set  $\Gamma$ of sections of $\bu$ such that 
(a) for each $f\in \Gamma$ the function $X\ni x\mapsto \|f(x)\|\in \R$ is upper semicontinuous, and (b) for each $x\in X$ the
set $\{f(x):f\in \Gamma\}$ is dense in $B_x$, then  there is a unique topology on $\bu$ making it into a Banach bundle such that 
$\Gamma\subseteq C(\bu)$, i.e. all the sections in $\Gamma$ become continuous. The Douady-dal Soglio-Herault theorem states that for every 
Banach bundle $\bu=\bigsqcup_{x\in X }B_x$ and every $v\in B_x$ there is a continuous section $f\in C(\bu)$ such that $f(x)=v$. In fact,
 one can choose
$f$ to be vanishing at infinity in the sense that for every $\varepsilon >0$ the set $\{x\in X: \|f(x)\|\geq \varepsilon \}$ is compact.
In particular, the space of continuous sections vanishing at infinity, which we denote by  $C_0(\bu)$, is  a  Banach $C_0(X)$-module (with the norm $\|f\|=\sup_{x\in X}\|f(x)\|$),
and we can recover $\bu=\bigsqcup_{x\in X }B_x$ from this module.

A \emph{line bundle} is a Banach bundle $\bu=\bigsqcup_{x\in X }B_x$ where each space $B_x$, $x\in X$, is one-dimensional (we will usually use $\L$ to denote line bundles).
It is well known  that every  \emph{continuous line bundle is locally trivial}. More specifically, if $\bu$ is a continuous line bundle, then for every point $x_0\in X$ there is a section $f\in C_0(\bu)$ with $|f(x_0)|=1$ so that 
 $U:=\{x\in X: |f(x)| > 1/2\}$ is an open neighborhood of $x_0$. Then $C_0(\bu|_{U})\cong C_0(U)$ because for every $g\in C_0(\bu|_{U})$  
there is a unique function $h:U\to \C$  such that $g(x)=h(x)f(x)$ for every $x\in U$. Continuity of $g$ and $f$ forces continuity of $h$ 
and since $g\in C_0(\bu|_U)$ and $\|f|_U\|\geq 1/2$ we get $h\in C_0(U)$. Thus $\bu|_U=\bigsqcup_{x\in U}B_x$ is isomorphic to the trivial bundle.
We stress that general (upper semicontinuous) Banach bundles need not be locally trivial. 
For instance, we may consider  a bundle $\bu=\bigsqcup_{x\in X } \C$ with topology in which  all upper semicontinuous functions 
are continuous sections. 

\subsection{Fell bundles over groupoids and groupoid dynamical systems}
In this paper $\G$ will always denote an \'etale locally compact Hausdorff groupoid. Equivalently, \(\G\) is a topological groupoid such that 
 the unit space $X:=\G^{(0)}$ is locally compact and Hausdorff,  the source and range maps $s,r:\G\to X$ are local homeomorphisms
and $X$ is a clopen subset of $\G$ (note that it is in fact  automatically open as a range of a local homeomorphism).  
Yet another equivalent phrasing is that $\G$ is a locally compact Hausdorff groupoid 
with a topological basis consisting of bisections. 
A bisection for $\G$ is a subset (usually assumed to be open) $U\subseteq \G$ such that the restrictions $s:U\to s(U)$,   $r:U\to r(U)$ are 
homeomorphisms. The set of all open bisections of $\G$ will be denoted $\Bis(\G)$.

A \emph{Fell bundle over the groupoid~$\G$}
is (an upper-semicontinuous)  Banach bundle $\bu=\bigsqcup_{\gamma\in \G }B_\gamma$
equipped with a continuous involution
\(^*: \bu\to \bu\)
and a continuous multiplication
${\cdot}: \{(a,b)\in \bu\times \bu : a\in B_{\gamma_1},\ b
  \in B_{\gamma_2},\ (\gamma_1,\gamma_2)\in \G^{(2)}\} \to \bu$,
that satisfy the standard set of axioms,  for details see, for instance, 
 \cite[Section 2]{BussExel}  or \cite{Kumjian}, \cite{Takeishi}. 
 Accordingly, this structure
turns the fibres~\(B_x\)
for \(x\in X\)
into $C^*$-algebras
and the fibres~\(B_\gamma\)
for \(\gamma\in \G\)
into Hilbert $B_{r(\gamma)}$-$B_{s(\gamma)}$-bimodules,
such that the multiplication map yields isometric Hilbert bimodule
maps
$B_{\gamma_1}\otimes_{B_{s(\gamma_1)}}
B_{\gamma_2}\to B_{\gamma_1\gamma_2}
$
for all \((\gamma_1,\gamma_2)\in \G^{(2)}\).
If these maps are surjective, the Fell bundle~\(\bu\)
is called \emph{saturated}. This is equivalent to assuming that all 
Hilbert $B_{s(\gamma)}$-modules~\(B_\gamma\) are \emph{full}, i.e.\ for each $\gamma \in \G$
the linear span of $
\langle B_{\gamma}, B_{\gamma}\rangle_{B_{s(\gamma)}}:=\{\langle \xi, \eta\rangle_{B_{s(\gamma)}}:\xi, \eta \in B_{\gamma}\}
$
is dense in $B_{s(\gamma)}$. 
Obviously, all Fell line bundles are saturated.

For every Fell bundle $\bu=\bigsqcup_{\gamma\in \G }B_\gamma$ its restriction 
$\bu|_{X}:=\bigsqcup_{x\in X}B_x$  
to the unit space is an upper semicontinuous $C^*$-bundle and thus $A:=C_0(\bu|_{X})$ is a \emph{$C_0(X)$-$C^*$-algebra}, i.e. 
$A$ is a $C^*$-algebra and  a $C_0(X)$-module with the module map taking values in $ZM(A)$ - the center of the multiplier algebra.
Every $C_0(X)$-$C^*$-algebra $A$ arises as the algebra of continuous sections for some $C^*$-bundle $\bu|_{X}$, cf.\ \cite[Appendix C]{WLbook}; further $A$ is called a \emph{continuous $C_0(X)$-$C^*$-algebra} if the corresponding bundle is continuous.

(Twisted)
groupoid actions on (continuous) $C_0(X)$-$C^*$-algebras were introduced  in \cite{Renault}.  
\begin{defn} 
A \emph{groupoid $C^*$-dynamical system} is a triple $(\A,\G,\alpha)$  where $\A=\bigsqcup_{x\in X}A_x$ is an upper semicontinuous $C^*$-bundle,
$\G$ is a topological groupoid with  unit space $X$ 
and $\alpha$ 
is a \emph{continuous groupoid homomorphism}, i.e.\ 
a family $\{\alpha_{\gamma}\}_{\gamma\in \G}$ of $*$-isomorphisms $\alpha_\gamma:A_{s(\gamma)}\to  A_{r(\gamma)}$, $\gamma\in \G$, such 
that $\alpha_{\gamma\eta}=\alpha_{\gamma}\circ\alpha_{\eta}$ for all $(\gamma,\eta)\in \G^{(2)}$
and the map $(\gamma,a) \to \alpha_{\gamma}(a)$ from 
 $\G *_s \A :=\{ (\gamma,a)\in \G \times \A:  a\in A_{s(\gamma)}\}$ to $\A$   is continuous.
\end{defn}
\begin{rem}
Sometimes groupoid  $C^*$-dynamical systems  are written as $(A,\G,\alpha)$
where $A=C_0(\A)$ is the  $C_0(X)$-$C^*$-algebra corresponding to the $C^*$-bundle $\A$.
If $A$ is (isomorphic to) $C_0(X)$, that is $\A=X\times \C$ is a trivial line bundle, then 
there is a unique action $\alpha$ of $\G$ on $\A$ consisting of identities on $\C$. Thus
 the triple $(C_0(X), \G, \alpha)$ might be then identified with $\G$. 
Note also that for every groupoid $C^*$-dynamical system the `unit' automorphisms $\alpha_x\in \textup{Aut}(A_x)$ for $x \in \Gz$ are all trivial. 
 
\end{rem}
\begin{ex}[\cite{Kumjian}]\label{ex:bundle_from_action}
Every groupoid $C^*$-dynamical system $(\A,\G,\alpha)$ yields a saturated Fell bundle $\A\rtimes_{\alpha} \G$ over $\G$ that extends
the $C^*$-bundle $\A$. 
As a topological space $\A\rtimes_{\alpha} \G$ is the pullback bundle  subspace $\A*_{r} \G:=\{(a,\gamma)\in \A\times \G: a\in A_{r(\gamma)}, \gamma \in \G\}$.
We may also write $\A\rtimes_{\alpha} \G=\bigsqcup_{\gamma\in \G }B_\gamma$
where  \(B_\gamma:= A_{r(\gamma)}\) for \(\gamma\in \G\). The multiplication maps and involutions are defined by
  \(B_\eta\times B_\gamma \to B_{\eta\gamma}\),
  \((a,b) \mapsto a\cdot \alpha_\eta(b)\), and
  \(B_\gamma \to B_{\gamma^{-1}}\),
  \(a\mapsto \alpha_\gamma^{-1}(a^*)\). 
	\end{ex}
\subsection{Twisted groupoid $C^*$-dynamical systems vs twisted inverse semigroup actions}
In most of this paper (with a brief exception in Section 6) we will only consider Kumjian twists (\cite{Kumjian0}), i.e.\ twists by the trivial circle bundle $X\times \T$. 
Thus by a \emph{twist of a groupoid} $\G$ we mean  a topological groupoid $\Sigma$ such that we have  
a central groupoid
extension
$$
 X \times \T \stackrel{i}{\hookrightarrow}\Sigma \stackrel{\pi}{\onto} \G,
$$
where $ X\times \T$ is the group bundle over $X$. More specifically, cf.\ \cite[Definition 5.1.1]{Sims}, $i$ and $\pi$ are groupoid homomorphisms that restrict to the identity on 
the unit spaces $X=\Sigma^{(0)}\cong\G^{(0)}$, $i$ is injective, $\pi$ is surjective and $\pi^{-1}(X)=i(X\times \T)$. 
So suppressing  the map $i$ we may assume that $ X \times \T$ is a subgroupoid of $\Sigma$.
 This subgroupoid is assumed to be central in the sense that for every   
$\sigma\in \Sigma$ and $z\in \T$ we have $ (r(\sigma),z)\sigma=\sigma (s(\sigma),z)$; we denote  this common element  by
 $z \cdot \sigma$. Moreover, one also requires that $\Sigma$ is a locally trivial $\G$-bundle in the sense that every point $\gamma\in \G$ has a
bisection neighbourhood $U$ on which there exists a continuous section $c:U\to \Sigma$ satisfying 
$\pi\circ c= id_{U}$, and such that the map $U\times \T \ni (\gamma,z) \mapsto  z \cdot c(\gamma) \in \pi^{-1}(U)$
is a homeomorphism  $U\times \T\cong \pi^{-1}(U)$.
Thus $\Sigma$ is a principal $\T$-space and $\Sigma/\T\cong \G$. We will also call the pair $(\G,\Sigma)$ as above simply a \emph{twisted groupoid}.

Renault \cite{Renault}, \cite{Renault2} generalised Green's notion of twisted group actions \cite{Green}  to groupoid actions   as follows. 
We denote by $UM(A)$ the group of unitaries in the multiplier algebra of a $C^*$-algebra $A$.

\begin{defn}\label{def:twisted_groupoid_action}
A \emph{twisted groupoid $C^*$-dynamical system}, usually denoted by a quadruple   $(\A,\G,\Sigma,\alpha)$, 
consists of a twisted groupoid  $(\G,\Sigma)$ and a groupoid $C^*$-dynamical system  $(\A,\Sigma, \alpha)$ 
that are combined by a ``twisting'' continuous groupoid homomorphism $u: X \times  \T \to \bigsqcup_{x\in X} UM(A_x)$. This means that  
putting $u_x(z):=u(x,z)$, $x\in X$, $z\in \T$, we have a family $\{u_x\}_{x\in X}$ of 
group homomorphisms $u_x:\T\to UM(A_x)$, $x\in X$, such that 
\begin{enumerate}
\item\label{def:twisted_groupoid_action1} the map  $ \A  \times  \T \to \A$, where $ A_x \times  \T \ni ( a_x, z) \to a_xu_x(z)\in A_x$,
is continuous;
\item\label{def:twisted_groupoid_action2}
$\alpha_{(x,z)}:A_x\to A_x$  is  given by $u_x(z) (\cdot) u_x(z)^{-1}$, for every $(x,z)\in X \times \T$;

\item\label{def:twisted_groupoid_action3} $u_{r(\sigma)}(z)=\alpha_{\sigma} (u_{s(\sigma)}(z))$ 
for every $\sigma \in \Sigma$ and $z\in \T$; here 
$\alpha_{\sigma}:M(A_{s(\sigma)})\to M(A_{r(\sigma)})$ is the unique strictly continuous extension of $\alpha_{\sigma}:A_{s(\sigma)}\to A_{r(\sigma)}$.

\end{enumerate}
\end{defn}
	\begin{ex}\label{ex:bundle_from_twisted_action}
Every twisted groupoid  $C^*$-dynamical system $(\A,\G,\Sigma,\alpha)$  yields a saturated Fell bundle  over $\G$, cf.\ \cite[Section 3.1]{Lalonde}. 
We define it as the quotient $\A\rtimes_{\alpha} \Sigma/ \T$ of 
the Fell bundle $\A\rtimes_{\alpha} \Sigma$ associated to $(\A,\Sigma, \alpha)$ as in Example \ref{ex:bundle_from_action},
by the 
$\T$-action defined by 
\begin{equation}\label{eq:circle_action_twisted}
z \cdot(a,\sigma):= (a u_{r(\sigma)}(z)^{-1}, z\cdot \sigma ), \qquad (a,\sigma)\in \A\rtimes_{\alpha} \Sigma, z\in \T.
\end{equation}
We denote by $[a,\sigma]$ the corresponding class - the $\T$-orbit of $(a,\sigma)$. 
We also write  $\dot{\sigma}$ for the image of $\sigma\in \Sigma$ in $\G=\Sigma/\T$.
Then the fibers of the corresponding bundle are
$B_{\dot{\sigma}}=\{[a,\sigma]: a\in A_{r(\sigma)}\}$. 
The multiplication maps and involutions on $\A\rtimes_{\alpha} \Sigma$ factor through to 
well defined operations on the  quotient $\A\rtimes_{\alpha} \Sigma/ \T$. So 
the  formulas  
  $[a,\sigma]\cdot [b,\tau]:=[a\alpha_\sigma(b),  \sigma \tau]$, and
 $[a,\sigma]^{*}:=[\alpha_\sigma^{-1}(a^*),\sigma^{-1}]$, $a\in A_{r(\sigma)}$, $b\in A_{r(\tau)}$, 
yield the structure of the Fell bundle on $\B:=\bigsqcup_{\gamma\in \G} B_{\gamma}$. We will refer to it as 
the  \emph{Fell bundle associated to the twisted groupoid  $C^*$-dynamical system} $(\A,\G,\Sigma,\alpha)$.
 \end{ex}
\begin{rem}\label{rem:line_bundles_twists}
If $\A= X\times \C$ is a trivial line bundle (and $u_x$ are identities), then the construction in Example \ref{ex:bundle_from_twisted_action}
specializes to the standard construction of a continuous line bundle $\L=\bigsqcup_{\gamma\in \G} L_\gamma$ from a twisted groupoid $(\G,\Sigma)$, see \cite[2.5.iv]{Kumjian} or \cite{Re}.  
This gives a  part of the well known equivalence between continuous Fell line bundles over  $\G$ and twists of $\G$. 
For the other part, recall that if  $\L=\bigsqcup_{\gamma\in \G} L_\gamma$ is a continuous Fell line bundle then the circle bundle
$
\Sigma:=\{\xi \in \L: |\xi|=1\}
$ with the topology and multiplication inherited from $\L$ is a twist of $\G$.
Note that in this case both the line bundle $\L$ and the extension $\T\times X \hookrightarrow\Sigma \onto \G$ are  locally
trivial.
\end{rem}

Busby and Smith \cite{BusbySmith} introduced twisted actions by groups that involve cocycles rather than group extensions. As shown by Packer and Raeburn \cite{PackerRaeburn} this formalism covers
Green's approach to twisted crossed products.  Also the original Renault's twisted groupoid  $C^*$-algebras \cite{Renault0}  were defined in the spirit of Busby and Smith (dating back in fact already to Zeller-Meier \cite{Zeller-Meier}). 
Then Renault switched to  Green's approach in \cite{Renault}, \cite{Renault2},
as it was not clear whether the Packer-Raeburn result generalises to groupoids. 
This question is raised, for instance, in \cite[Remark 5.1.6]{Sims}, and  we were informed by Tristan Bice that the answer to it is negative.
Bice's example of a groupoid twist that does admit a continuous global section  comes from 
the Pedersen-Petersen $C^*$-Algebras, 
see  \cite{Bice}. The relevant $C^*$-algebra $\mathfrak{B}_1$ is a cross-sectional algebra of 
a continuous line bundle over  the principal groupoid $\G = \C P^1  \times\{0,1\}^2$
whose every continuous section  has to vanish by the Borsuk-Ulam Theorem. 

Nevertheless, we will show that Kumjian-Renault twisted actions are covered by 
a version of Busby-Smith twisted actions adapted to inverse semigroups by Buss and Exel  \cite{BussExel0}.
\begin{defn}\label{defn:twisted_inverse_semigroup_action}
A \emph{twisted action of an inverse semigroup}~\(S\)
  on a \(C^*\)-algebra~\(A\)
  consists of partial automorphisms
  \(\beta_t\colon D_{t^*}\to D_t\)
  of~\(A\)
  for \(t\in S\)
  -- that is, \(D_t\)
  is an ideal in~\(A\)
  and~\(\beta_t\)
  is a $*$-isomorphism -- and unitary multipliers
  \(\omega(t, u)\in UM(D_{t u})\)
  for \(t,u \in S\),
  such that \(D_1=A\)
  and the following conditions hold for \(r,t,u\in S\)
  and \(e,f\in E(S)\), where $E(S)$ is the set of units in $S$:
  \begin{enumerate}
  \item[(T1)] $D_{(rt)^*} = D_{t^*} \cap \beta_t^{-1}(D_{r^*})$ and  \(\beta_r\circ \beta_t= Ad_{\omega(r,t)}\beta_{r t}\);
  \item[(T2)]  \(\beta_r\big(a \omega(t,u)\big) \omega(r,t u)=
    \beta_r(a)\omega(r,t)\omega(r t,u)\) for \(a\in D_{r^*}\cap
    D_{t u}\);
  \item[(T3)]  \(\omega(e,f) = 1_{e f}\)
    and \(\omega(r,r^*r)=\omega(r r^*,r)=1_r\),
    where~\(1_r\) is the unit of \(M(D_r)\);
  \item[(T4)]  \(\omega(t^*,e)\omega(t^*e,t)a = \omega(t^*,t) a\)
    for all \(a\in D_{t^* e t}\).
  \end{enumerate}
\end{defn}

Given an \'etale groupoid $\G$, the set of its open bisections $\Bis(\G)$ forms naturally an inverse semigroup with operations
$$
U\cdot V:=\{\gamma\eta: \gamma\in U, \eta \in V, s(\gamma)=r(\eta)\},\qquad U^{*}=\{\gamma^{-1}:\gamma \in U\}, \qquad U,V\in \Bis(\G).
$$ 
An inverse semigroup $S\subseteq \Bis(\G)$ defines an inverse semigroup action $\{h_{U}\}_{U\in S}$ on the unit space $X=\G^{(0)}$,
where $h_U:=r\circ s|_{U}^{-1}:s(U)\to r(U)$. The transformation groupoid $X\rtimes_h S$ for this action is naturally isomorphic to $\G$ if and only if the semigroup $S$ is\emph{ wide}, that is
 \(\bigcup S = \G\) and \(U\cap V\) is a union of bisections
  in~\(S\) for all \(U,V\in S\); the corresponding notions and facts mentioned above are described for example in \cite[Section 2]{BartoszRalf2}.
  
Note that the conclusion of the next lemma does not depend on the choice of the section $c_U$; the resulting actions for different choices of sections need not explicitly coincide, but the resulting crossed products are isomorphic (see Theorem \ref{thm:twisted_action_groupoid_vs_inverse_semigroup} below).
	
\begin{lem}\label{lem:twisted_action_groupoid_vs_inverse_semigroup}
Let $(\A,\G,\Sigma,\alpha)$ be a twisted groupoid $C^*$-dynamical system with $\G$ \'etale. 
Let $S$ be the family of open bisections $U$  of $\G$ on which the $\T$-bundle $\pi:\Sigma\to \G$ is trivial, and for each  $U\in S$
choose a continuous section $c_U:U\to \Sigma$ that induces the homeomorphism $U\times \T\cong \pi^{-1}(U)$. 
If $U\subseteq X$ we choose $c_{U}$ to be given by  $c_U(x):=(x,1)\subseteq X\times \T\subseteq \Sigma$, for $x\in U$. Then the following statements hold.
\begin{enumerate}
\item\label{enu:inverse_semigroup_action1}   $S$ is a wide unital inverse subsemigroup of $\Bis(\G)$.
\item\label{enu:inverse_semigroup_action2}  For each $U\in S$ we may  treat $D_U:=C_0(\A|_{r(U)})$ as an ideal in $A:=C_0(\A)$. 
The formula
$$
\beta_{U}(a)(r(\gamma))= \alpha_{c_{U}(\gamma)}(a (s(\gamma)),\qquad  a\in D_{U^*}=C_0(\A|_{s(U)}),\,\, \gamma\in U,
$$
gives a well defined $*$-isomorphism $\beta_{U}:D_{U^*} \to D_{U}$.
\item\label{enu:inverse_semigroup_action4} For every $U$, $V\in S$ the formula
$$
\omega(U,V)(r(\gamma\eta)) = u( c_U(\gamma)c_V(\eta) c_{UV}(\gamma\eta)^{-1}), \qquad \gamma\in U, \eta\in V
$$ 
gives a well defined element in $UM(D_{UV})$.
\end{enumerate}
The quadruple $(A,S,\beta, \omega)$ described above yields a twisted action of an inverse semigroup in the sense 
of Definition \ref{defn:twisted_inverse_semigroup_action}.
\end{lem}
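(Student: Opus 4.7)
My plan is to verify (1), (2), (3) in turn and then check the four axioms (T1)--(T4) of Definition~\ref{defn:twisted_inverse_semigroup_action}. The key device throughout is to keep all computations in $\Sigma$ for as long as possible, and transport the result to $A$ only at the very end via $\alpha$ and $u$.

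For (1), local triviality of the twist $\pi\colon \Sigma\to\G$ guarantees that every $\gamma\in\G$ lies in an open bisection on which a continuous section exists, so $\bigcup S=\G$; moreover, for $U,V\in S$ the intersection $U\cap V$ is covered by bisections in $S$ obtained by restricting either of the chosen sections, so $S$ is wide in $\Bis(\G)$. Closure of $S$ under $\cdot$ and $^{*}$ is immediate: $(\gamma,\eta)\mapsto c_U(\gamma)c_V(\eta)$ and $\gamma^{-1}\mapsto c_U(\gamma)^{-1}$ are continuous sections of $\pi$ over $UV$ and $U^{*}$ respectively, and $X\in S$ by the stipulated convention, so $S$ is a wide unital inverse subsemigroup.

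For (2) and (3), $\beta_U$ is well defined because $\alpha_{c_U(\gamma)}\colon A_{s(\gamma)}\to A_{r(\gamma)}$ depends continuously on $\gamma\in U$ (by continuity of $c_U$ and of $\alpha$) and $r|_U\colon U\to r(U)$ is a homeomorphism, so the formula produces a continuous section of $\A|_{r(U)}$ vanishing at infinity; fibrewise it is a composition of $*$-isomorphisms, hence $\beta_U\colon D_{U^{*}}\to D_U$ is a $*$-isomorphism. For $\omega(U,V)$, both $c_U(\gamma)c_V(\eta)$ and $c_{UV}(\gamma\eta)$ project onto $\gamma\eta$, so their ``ratio'' $c_U(\gamma)c_V(\eta)c_{UV}(\gamma\eta)^{-1}$ lies in the isotropy fibre $\pi^{-1}(r(\gamma\eta))\cong\{r(\gamma\eta)\}\times\T$; applying $u$ and using continuity of all ingredients yields a continuous unitary section of $UM(\A|_{r(UV)})$, and hence a unitary multiplier in $UM(D_{UV})$.

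To verify (T1)--(T4) I rewrite each axiom as an identity in $\Sigma$. The defining relation $c_U(\gamma)c_V(\eta)=\omega(U,V)(r(\gamma\eta))\cdot c_{UV}(\gamma\eta)$ in $\Sigma$, combined with the second clause of Definition~\ref{def:twisted_groupoid_action} (which describes $\alpha$ on the central $\T$-fibre as conjugation by $u$), gives $\beta_U\circ\beta_V=\mathrm{Ad}_{\omega(U,V)}\beta_{UV}$; the domain equality in (T1) reduces to the set-theoretic identity $s(UV)=s(V)\cap h_V^{-1}(s(U))$ for the induced inverse semigroup action on $X$. The $2$-cocycle condition (T2) arises from associativity $(c_R(\rho)c_T(\tau))c_U(\eta)=c_R(\rho)(c_T(\tau)c_U(\eta))$ in $\Sigma$, together with the third clause of Definition~\ref{def:twisted_groupoid_action}, which permits moving $\alpha_{c_R(\rho)}$ past the unitary multipliers $u(\cdot)$. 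The normalisation axiom (T3) is immediate from the convention $c_U(x)=(x,1)$ for $U\subseteq X$: all units in $\Sigma$ cancel in the relevant products, leaving $u(x,1)=1$. Finally, (T4) follows by comparing $c_{U^{*}}(\gamma^{-1})c_e(x)c_U(\gamma)$ with $c_{U^{*}}(\gamma^{-1})c_U(\gamma)$ on the support of $a\in D_{U^{*}eU}$, where $e$ acts as a unit and the extra factor $c_e(x)=(x,1)$ disappears.

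The main obstacle I expect is the clean verification of (T2). It requires tracking how a central $\T$-factor that appears in an associativity relation inside $\Sigma$ is transported through $\alpha_{c_R(\rho)}$ to become an inner unitary on the $C^*$-level via the third clause of Definition~\ref{def:twisted_groupoid_action}; after this translation the identity collapses into associativity in $\Sigma$, but assembling the sections, ranges, and $u$-values in the correct order is the delicate bookkeeping step, and one must be careful to use the strict continuity of $\alpha_\sigma$ on multipliers to make sense of expressions like $\alpha_{c_R(\rho)}(u_{s(\rho)}(z))$.
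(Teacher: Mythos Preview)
Your proposal is correct and follows essentially the same approach as the paper: you verify (1)--(3) via the explicit sections $c_U(\gamma)c_V(\eta)$ and $c_U(\gamma)^{-1}$, and then reduce (T1)--(T4) to identities in $\Sigma$ transported to $A$ via the axioms of Definition~\ref{def:twisted_groupoid_action}. The paper carries out exactly this plan, with (T2) handled (as you anticipate) by unwinding associativity of the product $c_Zc_Uc_V$ in $\Sigma$ after using clause~\eqref{def:twisted_groupoid_action3} to pull the central $\T$-factor through $\alpha_{c_Z(\sigma)}$.
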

\begin{proof}
\eqref{enu:inverse_semigroup_action1} Let $U,V\in S$ and $c_U:U\to \Sigma$ and $c_V:V\to \Sigma$ be continuous sections that trivialise $\pi^{-1}(U)$ and $\pi^{-1}(V)$.
Then  putting $d_{U^{*}}(\gamma):=c_{U}(\gamma^{-1})^{-1}$, for $\gamma \in U^*$, and  $d_{UV}(\gamma\eta):=c_{U}(\gamma)c_{V}(\eta)$ for $\gamma\in U, \eta\in V$, 
we get continuous sections that trivializes $\pi^{-1}(U^*)$ and $\pi^{-1}(UV)$, respectively.
Thus $S$ is an inverse subsemigroup of $\Bis(\G)$. It is unital because $X\in S$ and it is wide as it is closed under inclusions and every $\gamma\in \G$ has 
a neighbourhood $U\in S$.  

\eqref{enu:inverse_semigroup_action2} Identifying sections of $\A|_{r(U)}$ with  sections of $\A$ that vanish outside the open set $r(U)$ 
we have that $D_U:=C_0(\A|_{r(U)})\subseteq C_0(\A)=A$ is an ideal. 
Using  continuity of the map $ \Sigma \, *_s\A  \ni(\sigma,a) \to \alpha_{\sigma}(a)\in \A$ and that 
$r:U\to r(U)$, $s:U\to s(U)$ and $c_U:U\to c_U(U)\subseteq \Sigma$ are homeomorphisms,
we see that the formula $\beta_{U}(f)(x):= \alpha_{c_{U}(r|_{U}^{-1}(x))}(f (s(r|_{U}^{-1}(x)))$, for $f\in C_0(\A|_{s(U)})$, $x\in r(U)$, gives a well defined element $\beta_{U}(f)$ in $C_0(\A|_{r(U)})$. This is exactly the 
formula in \eqref{enu:inverse_semigroup_action2}.
The map $\beta_{U}:D_{U^*} \to D_{U}$ is a $*$-isomorphism with the inverse given by 
 $\beta_{U}^{-1}(f)(s(\gamma))= \alpha_{c_{U}(\gamma)^{-1}}(f (r(\gamma)))$ for $f\in D_{U}=C_0(\A|_{r(U)})$, $\gamma \in U$.

\eqref{enu:inverse_semigroup_action4} Note first that we have  $UV\ni \gamma\eta\mapsto c_U(\gamma)c_V(\eta) c_{UV}(\gamma\eta)^{-1}\in r(UV)\times \T\subseteq X\times \T$ because 
$\pi(c_U(\gamma)c_V(\eta) c_{UV}(\gamma\eta)^{-1})=\pi(r(\gamma))$. Moreover this function is continuous, as so is the natural map $UV \ni \gamma \eta \to (\gamma, \eta) \in U \times V$.
Since $r:UV\to r(UV)$ is a homeomorphism and  $u: X \times  \T \to \bigsqcup_{x\in X} UM(A_x)$ is  continuous, we get that for every $a\in D_{UV}=C_0(\A|_{r(UV)})$ the formula 
$$
  [\omega(U,V)a]  (r(\gamma\eta))=u( c_U(\gamma)c_V(\eta) c_{UV}(\gamma\eta)^{-1}) a(r(\gamma\eta)) 
$$
defines an element of $C_0(\A|_{r(UV)})$. Now one readily infers that  $\omega(U,V)$ is a multiplier of $C_0(\A|_{r(UV)})$ with the adjoint
given by $\omega(U,V)^*=u( c_U(\gamma)c_V(\eta) c_{UV}(\gamma\eta)^{-1})^*$. Hence $\omega(U,V)\in UM(D_{UV})$. Note that in terms of the notation introduced in Definition \ref{def:twisted_groupoid_action} we have ($\gamma \in U, \eta \in V, \gamma \eta \in UV$)
\[ \omega(U,V) (r(\gamma \eta)) = u_{r(\gamma)} (p(c_U(\gamma) c_V(\eta) c_{UV}(\gamma\eta)^{-1} )),  \]
where $p:X\times \T\to \T$ is the projection onto the second coordinate.
Now we need to check conditions (T1)-(T4) in Definition \ref{defn:twisted_inverse_semigroup_action}. To simplify the notation in what follows we will sometimes write $\omega_{U,V}$ instead of $\omega(U,V)$. Choose  
$\gamma\in U\in S, \eta\in V\in S$ so that $\gamma\eta\in UV$. Then for $x:=r(\gamma\eta)=r(\gamma)$ and $a\in D_{(UV)^*}$, using 
Definition \ref{def:twisted_groupoid_action} \eqref{def:twisted_groupoid_action2}, we have
\begin{align*}
\left((Ad_{\omega_{U,V}} \circ \beta_{UV }) (a)\right) (x)&=\alpha_{c_U(\gamma)c_V(\eta) c_{UV}(\gamma\eta)^{-1}} (\alpha_{c_{UV}(\gamma\eta)}(a(x))
\\
&=\alpha_{c_U(\gamma)c_V(\eta)} (a(x))= \left((\beta_{U} \circ  \beta_{V}) (a)\right) (x).
\end{align*}
This proves (T1).  

Now take $a\in D_{Z^*}\cap D_{UV}=C_0(\A|_{s(Z)\cap r(UV)})$ where $U,V,Z\in S$. 
Take any $x\in r(ZUV)$ and choose $\gamma\in U$, $\eta\in V$ and $\sigma\in Z$ so that $x=r(\sigma\gamma\eta)$. 
Denoting by $p$ again the projection onto the second coordinate and using Definition \ref{def:twisted_groupoid_action}\eqref{def:twisted_groupoid_action3}
we get 
\begin{align*}
\beta_Z\Big(a\cdot  \omega_{U,V}\Big) (x)&=\alpha_{c_Z(\sigma)}\Big(a(s(\sigma)) u( c_U(\gamma)c_V(\eta) c_{UV}(\gamma\eta)^{-1}) \Big) \\
&=\alpha_{c_Z(\sigma)} \left(a(s(\sigma)) \cdot  u_{r(\gamma)}\Big( p(c_U(\gamma)c_V(\eta) c_{UV}(\gamma\eta)^{-1})\Big)\right)
\\
&=\beta_Z(a)(x) \cdot  u_{r(\sigma)}\Big( p(c_U(\gamma)c_V(\eta) c_{UV}(\gamma\eta)^{-1})\Big).
\end{align*}
Using that the extension is central, for the pointwise muliplication on  $r(ZUV)$ we have
\begin{align*}
p(c_Uc_V c_{UV}^{-1})p(c_Z c_{UV} c_{ZUV}^{-1})&=p(c_Z  p(c_Uc_V c_{UV}^{-1}) c_{UV} c_{ZUV}^{-1})=p(c_Z  c_Uc_V c_{UV}^{-1} c_{UV} c_{ZUV}^{-1})
\\
&=p(c_Z  c_Uc_V c_{ZUV}^{-1})=p(c_Z  c_U (c_{ZU}^{-1} c_{ZU})c_V c_{ZUV}^{-1})
\\
&=p(c_Z  c_U c_{ZU}^{-1}) p( c_{ZU}c_V c_{ZUV}^{-1}).
\end{align*}
Combining the above and using that $\omega_{Z,UV} (x)= u_{r(\sigma)}( p(c_Z(\sigma)c_{UV}(\gamma\eta) c_{ZUV}(\sigma\gamma\eta)^{-1})$,  we get

\begin{align*}
\beta_Z\Big(a\cdot  \omega_{U,V}\Big)\cdot \omega_{Z,UV}&=\beta_Z(a) \cdot  u\Big( p(c_U(\gamma)c_V(\eta)c_{UV}(\gamma\eta)^{-1}) c_Z(\sigma)c_{UV}(\gamma \eta) c_{ZUV}(\sigma\gamma \eta)^{-1}\Big) \\
&=\beta_Z(a) \cdot  u\big(c_Z(\sigma)c_U(\gamma) c_{ZU}(\sigma\gamma)^{-1}\big)  u \big(c_{ZU}(\sigma\gamma)c_{V}(\eta) c_{ZUV}(\sigma \gamma \eta)^{-1}\big)
\\
&=\beta_Z(a) \omega_{Z,U} \omega_{ZU,V}
\end{align*}
on $r(ZUV)$. This proves (T2).

Idempotents in $S$ correspond to open subsets of $X$. Thus axiom (T3) follows readily from our choice of $c_{U}$ for $U\subseteq X$.
To check (T4) let $U, V\in S$ where $V\subseteq X$, so that $c_V(V)=V\times \{1\}\subseteq V\times \T\subseteq \Sigma$. Then
on the set  $r(U^*VU)$ we have
$
\omega(U^*,V)\omega(U^*V,U) = u(c_{U^*}c_V c_{U^*V}^{-1}) u(c_{U^*V} c_{U}c_{U^*VU}^{-1})=u(c_{U^*}c_{U}c_{U^*VU}^{-1})=\omega(U^*,U) 
$, where we for example use the fact that $U^*VU$ is an idempotent, so that $c_{U^*VU} = c_{U^*U}|_{U^*VU}$.
\end{proof}
\emph{Saturated Fell bundles over inverse semigroups} (\cite{Exel:noncomm.cartan}, \cite{BussExel0}, \cite{BussExel}) are equivalent to inverse semigroup actions by Hilbert bimodules (\cite{BussMeyer}, \cite{BussExelMeyer}), whose  definition is relatively concise.
\begin{defn}[Definition 4.7, \cite{BussMeyer}]
Let $S$ be a unital inverse semigroup and $A$ a $C^*$-algebra. \emph{An action of $S$ on $A$ by Hilbert bimodules} is a semigroup 
$\mathcal{C}=\bigsqcup_{t\in S} C_t$ 
where each fiber $C_t$ is a Hilbert $A$-bimodule, $C_1=A$ is a trivial bimodule, and the semigroup product in $\mathcal{C}$ 
induces isomorphisms $C_t\otimes C_s\cong C_{ts}$ of Hilbert $A$-bimodules, for all $t,s\in S$. 
\end{defn}
\begin{rem} Any inverse semigroup action by Hilbert bimodules 
$\mathcal{C}=\bigsqcup_{t\in S} C_t$ induces uniquely defined canonical involutions
\(C_t^*\to C_{t^*}\), \(x\mapsto x^*\), and inclusion maps
\(j_{s,t}\colon C_t\to C_s\) for $t,s \in S$, \(t \le s\) that satisfy the
conditions required for a saturated Fell bundle,  as studied in \cite{Exel:noncomm.cartan}, \cite{BussExel0} and \cite{BussExel},
see 
\cite[Theorem~4.8]{BussMeyer}. Note further  that the structure of $\mathcal{C}=\bigsqcup_{t\in S} C_t$ is also uniquely determined by the multiplication
and involution in $\mathcal{C}$ (these operations determine the  Hilbert $A$-module operations on each fiber $C_t$).
\end{rem}
\begin{ex}\label{ex:bundle_from_semigroup_action}
Any twisted action  $(A,S,\beta, \omega)$ of an inverse semigroup $S$ on a $C^*$-algebra $A$ yields a natural action of $S$ on $A$ by Hilbert bimodules given by 
a family of the standard bimodules associated to partial automorphisms $\{\beta_t\}_{t\in S}$ of $A$. Namely, we put
$
\mathcal{C}:=\bigsqcup_{t\in S} D_t 
$, 
where each $D_t$ is a Hilbert $A$-bimodule with operations
$$
a\cdot \xi:=a\xi,\qquad \xi \cdot a:= \beta_t(\beta^{-1}_t(\xi)a),\qquad  \langle \xi,\psi\rangle_A:=\beta_t^{-1}(\xi^*\psi).
$$
where $a\in A$, $\xi,\psi\in D_t$. Multiplication between different fibers is given by 
$$
D_s\times D_t \ni (\xi_s, \xi_t)\longmapsto \beta_s(\beta^{-1}_s(\xi_s)\xi_t) \omega(s,t)\in D_{st}.
$$
The induced involution is given by 
$
D_s \ni \xi_s \longmapsto  \beta_s^{-1}(\xi_s^*) \omega(s^*,s)^*\in D_{s^*}, 
$
cf.\ \cite[page 250]{BussExel0}. In particular, 
every twisted groupoid $C^*$-dynamical system $(\A,\G,\Sigma,\alpha)$ yields 
a saturated Fell bundle $\bigsqcup_{U\in S}C_0(\A|_{r(U)})$ 
with operations coming from the twisted action  $(A,S,\beta, \omega)$ of an inverse semigroup $S$ on $A:=C_0(\A)$ 
as described in Lemma \ref{lem:twisted_action_groupoid_vs_inverse_semigroup}.  
\end{ex}
\subsection{The associated convolution $C^*$-algebras}
Let  $\bu=\bigsqcup_{\gamma\in \G }B_\gamma$ be a Fell bundle over an \'etale locally compact Hausdorff groupoid  $\G$.
The space  of compactly supported continuous sections $C_c(\bu)$ is naturally a $*$-algebra with operations 
\begin{equation}\label{eq:groupoid_algebra_operation}
  (f*g)(\gamma):= \sum_{r(\eta) = r(\gamma)}
  f(\eta)\cdot g(\eta^{-1}\cdot \gamma),\qquad
  (f^*)(\gamma) := f(\gamma^{-1})^*, 
\end{equation}
$f, g \in C_c(\bu)$, $\gamma \in \G$.
The
\emph{full section $C^*$-algebra} $C^*(\bu)$  is the maximal $C^*$-completion of
the $*$-algebra $C_c(\bu)$. Let $X:=\G^{(0)}$  be the unit space. The $C_0(X)$-$C^*$-algebra 
$
A:=C_0(\bu|_{X})
$
embeds naturally into $C^*(\bu)$, and the restriction map  $C_c(\bu)\ni f\to f|_X\in A$ extends to a conditional expectation
$C^*(\B)\to A$.  The \emph{reduced section $C^*$-algebra} $C_r^*(\bu)$ can be defined as the completion of the $*$-algebra $C_c(\bu)$ with respect to the minimal $C^*$-norm such that the restriction map $C_c(\bu)\to A$ is contractive. Thus the restriction 
extends to a faithful conditional expectation $E:C_r^*(\bu)\to A$. Note that if this conditional expectation is tracial, then  $A$ is commutative; but this condition is itself not sufficient. 
We will also need an explicit faithful representation of $C_r^*(\bu)$, as described for example in \cite{Kumjian}. 
Given $x \in \G^{(0)}$  put $\G_x:=\{\gamma \in \G:s(\gamma)=x\}$ and consider the Hilbert $B_{x}$-module
$V_x:=\bigoplus_{\gamma \in \G_x} B_\gamma$ (recall that the scalar product comes  from the bundle operations: if $a,b \in B_\gamma$, then $a^*b \in B_{\gamma^{-1}} B_\gamma\subseteq B_{\gamma^{-1} \gamma}= B_{s(\gamma)}$). Then we have a representation $\pi_x:C_r^*(\bu) \to B(V_x)$ determined by the formula
\begin{equation} 
\pi_x(f) (\bigoplus_{\gamma \in \G_x} \xi_\gamma ) = \bigoplus_{\gamma \in \G_x} \left(\sum_{\beta \in \G_x} f(\gamma \beta^{-1}) \xi_\beta \right),\label{pix}
\end{equation}
where  $f \in C_c(\bu)$, $\xi_\gamma \in B_\gamma$ for each $\gamma \in \G_x$, and the series $\sum_{\gamma \in \G_x} \langle \xi_\gamma, \xi_\gamma\rangle$ converges in $B_x$. The direct sum of the representations $\pi_x$ over $x \in \G^{(0)}$ is faithful.
\begin{rem}\label{rem:groupoid_identifications}
When equipped with the supremum norm, $C_0(\B)$ is a Banach space.
The embedding \(C_c(\B) \to C_0(\B)\)
  extends uniquely to an injective and contractive linear map
  \(C_r^*(\bu) \to  C_0(\B)\) which allows us to assume the identification
	$$
	C_r^*(\bu) \subseteq C_0(\B).
	$$
	Under this identification the $*$-algebraic operations in $C_r^*(\bu)$ are still given by \eqref{eq:groupoid_algebra_operation},
	see \cite[Proposition 7.10]{BartoszRalf2}. 
Moreover, for any $C^*$-completion $B=\overline{C_c(\B)}$ in a  $C^*$-norm $\|\cdot \|$  
that extends the supremum norm  on $C_c(\bu|_{\Gz})$  we have natural isometric embeddings $C_0(\B|_U)\hookrightarrow B$ for all open bisections 
$U\subseteq \G$. Indeed, if $f \in C_c(\B|_U)$, then 
$
\|f\|^2=\|f^**f\|_{C_0(\bu|_{\Gz})}=\max_{x\in \Gz} \|(f^**f)(x)\|=\max_{\gamma \in U} \|f^*(\gamma) f(\gamma)\|=\max_{\gamma \in U} \|f(\gamma)\|^2.
$
\end{rem}

\begin{ex}
Let $(\A,\G,\alpha)$ be a groupoid $C^*$-dynamical system and let $\A\rtimes_{\alpha} \G$
be the associated Fell bundle, cf.\ Example  \ref{ex:bundle_from_action}. 
The $*$-algebra $C_c(\A\rtimes_{\alpha} \G)$
consists of compactly supported continuous sections of $\A*_{r} \G=\bigsqcup_{\gamma\in \G }A_{r(\gamma)}$ with operations given by 
$$
  (f*g)(\gamma):= \sum_{r(\eta) = r(\gamma)}
  f(\eta)\cdot \alpha_{\eta}\big(g(\eta^{-1}\cdot \gamma)\big),\qquad
  (f^*)(\gamma) := \alpha_{\gamma}(f(\gamma^{-1}))^*, \;\;\; \gamma \in \G.
$$
The \emph{full} and \emph{reduced crossed product}, denoted by $C^*(\A,\G,\alpha)$ and $C^*_r(\A,\G,\alpha)$,
is by definition  $C^*(\A\rtimes_{\alpha} \G)$ and $C^*_r(\A\rtimes_{\alpha} \G)$, respectively.
\end{ex}

Let now $(\A,\G,\Sigma,\alpha)$  be a twisted groupoid  $C^*$-dynamical system  and let $\bu$
be the associated Fell bundle over $\G$, cf.\ Example  \ref{ex:bundle_from_twisted_action}. 
By definition, the \emph{full} and \emph{reduced crossed product}, denoted by $C^*(\A,\G,\Sigma, \alpha)$ and $C^*_r(\A,\G,\Sigma, \alpha)$,
is   $C^*(\bu)$ and $C^*_r(\bu)$, respectively. 
Using the fact  that  $\pi^{-1}(X)=X\times \T\subseteq \Sigma$ is a trivial bundle, we get 
that $\A\supseteq A_x\ni a\mapsto [a,(x,1)]\in B_x\subseteq \B|_X$ is the isomorphism
of $C^*$-bundles $\A\cong \B|_X$. Therefore we may  treat 
$$
A:=C_0(\A)\cong C_0(\bu|_{X})
$$
as a $C^*$-subalgebra of both $C^*(\A,\G,\Sigma, \alpha)$ and $C^*_r(\A,\G,\Sigma, \alpha)$.
In particular, we have a faithful conditional expectation $E:C^*_r(\A,\G,\Sigma, \alpha)\to A$.
In a similar manner we may identify the spaces of sections of $\B$ restricted to open bisections
on which the twist $\Sigma\onto \G$ is trivial.

\begin{lem}\label{lem:equivalent actions on A}
Let $(\A,\G,\Sigma,\alpha)$ be a twisted groupoid $C^*$-dynamical system.
 Let $\B$ be the associated Fell bundle over $\G$ (see Example \ref{ex:bundle_from_twisted_action}), 
and let $(A,S,\beta, \omega)$ be the associated twisted action   of an inverse semigroup $S$ on $A:=C_0(\A)$ 
described in Lemma \ref{lem:twisted_action_groupoid_vs_inverse_semigroup}.  
Put $B:=C^*_r(\A,\G,\Sigma, \alpha)$. 
We have the following  natural actions of the inverse semigroup $S$ by Hilbert bimodules on $A$, which are all naturally isomorphic:
\begin{enumerate}
\item\label{enu:iso_actions1} $\bigsqcup_{U\in S} B_U$  where $B_U:=\overline{C_c(\B|_U)} \subseteq B$ and all the operations are inherited from $B$ (the closure is with respect to the supremum norm).
\item\label{enu:iso_actions1.5} $\bigsqcup_{U\in S} C_0(\B|_U)$  with the  multiplication $
C_0(\B|_U)\times C_0(\B|_V)\ni (f,g) \mapsto  f*g \in C_0(\B|_{UV})
$ and the involution  $
C_0(\B|_U)\ni f \mapsto  f^* \in C_0(\B|_{U^*})
$ given by
$$
f * g \;(\gamma\eta):=f(\gamma)\cdot g(\eta), \qquad f^*(\gamma^{-1})=f(\gamma)^*, \quad \text{ for $\gamma\in U$ and $\eta \in V$}.
$$
\item\label{enu:iso_actions2}  $\bigsqcup_{U\in S} C_0(\A*_r\G|_U)$ with  the multiplication $
C_0(\A*_r\G|_U)\times C_0(\A*_r\G|_V)\ni (f,g) \mapsto  f*g \in C_0(\A*_r\G|_{UV})
$ and the involution  $
C_0(\A*_r\G|_U)\ni f \mapsto  f^* \in C_0(\A*_r\G|_{U^*})
$ given by
$$
f * g \;(\gamma\eta):=f(\gamma) \alpha_{c_U(\gamma)}(g(\eta)) u( c_U(\gamma)c_V(\eta) c_{UV}(\gamma\eta)^{-1}),
$$
$$
f^*(\gamma^{-1}):=\alpha_{c_U(\gamma)}^{-1}(f(\gamma)^*)u(c_{U^*}(\gamma^{-1})c_U(\gamma))^*,
$$
where $\gamma\in U$ and $\eta \in V$.
\item\label{enu:iso_actions3} $\bigsqcup_{U\in S}D_U$ associated to $(A,S,\beta, \omega)$  as in Example \ref{ex:bundle_from_semigroup_action},  so that the  
Hilbert $A$-bimodule structure on $D_U=C_0(\A|_{r(U)})$ is given by
$$
a\cdot f:=af,\quad f \cdot a:= \beta_U(\beta^{-1}_U(f)a),\quad  {_A\langle} f,g\rangle:=fg^*,\quad \langle f,g\rangle_A:=\beta_U^{-1}(f^*g),
$$
and the multiplication $D_U\times D_V \ni (f, g)\longmapsto \beta_U(\beta^{-1}_U(f) g) \omega(U,V)\in D_{UV}$.
\end{enumerate}
The  isomorphisms are given by 
$B_U\supseteq C_c(\B|_U)\ni f\stackrel{id}{\mapsto}f\in  C_0(\B|_U)$,  $C_0(\A|_{r(U)})\ni f\mapsto f\circ r\in  C_0(\A*_r\G|_U) 
$ and $
C_c(\A*_r\G|_U)\ni f \mapsto [f]  \in C_c(\B|_U)\subseteq B_U$ where $[f](\gamma):=[f(\gamma), c_U(\gamma)]$.
\end{lem}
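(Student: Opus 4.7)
My plan is first to note that each of the four candidate objects is an inverse semigroup action by Hilbert bimodules for independent reasons, and then to verify that the three concrete maps identified in the statement are isometric bijections intertwining the bimodule, multiplicative and involutive structures. Throughout, the key combinatorial fact I will exploit is that every bisection $U$ separates points via $s$ and $r$, so for $\gamma \in U$ and $\eta \in V$ with $s(\gamma)=r(\eta)$, the product $\gamma\eta \in UV$ determines $\gamma$ and $\eta$ uniquely. This collapses the convolution sum \eqref{eq:groupoid_algebra_operation} to pointwise multiplication on bisections.

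First I would deal with the equivalence of \eqref{enu:iso_actions1} and \eqref{enu:iso_actions1.5}. By the last part of Remark \ref{rem:groupoid_identifications}, the inclusion $C_c(\B|_U)\hookrightarrow B$ extends to an isometric embedding $C_0(\B|_U)\hookrightarrow B$, and its image is exactly $B_U=\overline{C_c(\B|_U)}$. Restricting the convolution product \eqref{eq:groupoid_algebra_operation} from $B$ to sections supported on a bisection yields the pointwise formula in \eqref{enu:iso_actions1.5} by the uniqueness of factorisation above; the same holds for the involution. The Hilbert $A$-bimodule structure on $B_U$ inherited from $B$ coincides with the one on $C_0(\B|_U)$ coming from the fibrewise structure of the Fell bundle, because both are determined by the same convolution formulas.

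Next I would treat the equivalence of \eqref{enu:iso_actions2} and \eqref{enu:iso_actions3}. Since $r:U\to r(U)$ is a homeomorphism, the assignment $f\mapsto f\circ r$ is a topological bijection between $C_0(\A|_{r(U)})=D_U$ and $C_0(\A*_r\G|_U)$. Under this identification one translates the bimodule operations and the multiplication $D_U\times D_V\ni(f,g)\mapsto \beta_U(\beta_U^{-1}(f)g)\omega(U,V)\in D_{UV}$ from Example \ref{ex:bundle_from_semigroup_action} into pointwise formulas on $\A*_r\G$ by unwinding the definitions of $\beta_U$ and $\omega(U,V)$ given in Lemma \ref{lem:twisted_action_groupoid_vs_inverse_semigroup}. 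A routine calculation then recovers the formulas stated in \eqref{enu:iso_actions2}.

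The main step, and the one I expect to absorb most effort, is the identification of \eqref{enu:iso_actions1.5} with \eqref{enu:iso_actions2}. Since $c_U$ trivialises $\pi^{-1}(U)\cong U\times\T$, the map $\A*_r\G|_U\ni (a,\gamma)\mapsto [a,c_U(\gamma)]\in \B|_U$ is a bundle homeomorphism, and hence $f\mapsto [f]$ is a topological bijection between the section spaces. For $f\in C_0(\A*_r\G|_U)$, $g\in C_0(\A*_r\G|_V)$ and $\gamma\in U,\eta\in V$ with composable product $\gamma\eta$, the Fell bundle multiplication in \eqref{enu:iso_actions1.5} gives
\[
[f](\gamma)\cdot[g](\eta)=[f(\gamma)\alpha_{c_U(\gamma)}(g(\eta)),\,c_U(\gamma)c_V(\eta)].
\]
Writing $c_U(\gamma)c_V(\eta)=(r(\gamma),z)\cdot c_{UV}(\gamma\eta)$ with $z=p(c_U(\gamma)c_V(\eta)c_{UV}(\gamma\eta)^{-1})$, one applies \eqref{eq:circle_action_twisted} to absorb the scalar $z$ into the $A$-coefficient as $u_{r(\gamma)}(z)$, yielding exactly the cocycle factor in the multiplication formula of \eqref{enu:iso_actions2}. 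An analogous computation, using that $c_U(\gamma)^{-1}$ and $c_{U^*}(\gamma^{-1})$ differ by a $\T$-element, handles the involution and produces the unitary $u(c_{U^*}(\gamma^{-1})c_U(\gamma))^*$. The hardest bookkeeping is precisely this matching of twist factors; once it is done, compatibility with the Hilbert bimodule operations follows automatically since all three structures are determined by the same multiplication and involution. Combining the three identifications completes the proof.
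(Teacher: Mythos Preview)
Your proposal is correct and follows essentially the same route as the paper: you use Remark \ref{rem:groupoid_identifications} for \eqref{enu:iso_actions1}$\leftrightarrow$\eqref{enu:iso_actions1.5}, the homeomorphism $r:U\to r(U)$ for \eqref{enu:iso_actions2}$\leftrightarrow$\eqref{enu:iso_actions3}, and the trivialisation via $c_U$ together with the $\T$-action \eqref{eq:circle_action_twisted} for the main identification \eqref{enu:iso_actions1.5}$\leftrightarrow$\eqref{enu:iso_actions2}. The paper's proof is organised identically, with the only difference being that it writes out the chains of equalities for $[f][g](\gamma\eta)$ and $[f]^*(\gamma^{-1})$ in full, whereas you sketch the mechanism; the content is the same.
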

\begin{proof}
For the isomorphism between \eqref{enu:iso_actions1} and \eqref{enu:iso_actions1.5}  recall that the identity map $B_U\supseteq C_c(\B|_U)\ni f\stackrel{id}{\mapsto}f\in  C_0(\B|_U)$ is isometric and hence extends to the
isomorphism $B_U\cong C_0(\B|_U)$, see Remark \ref{rem:groupoid_identifications}. That this isomorphism is consistent with the multiplication and the involution is obvious.

The isomorphism between the actions in \eqref{enu:iso_actions2} and \eqref{enu:iso_actions3}
 is straightforward, since for each $U\in S$ the homeomorphism $r:U\to r(U)$ induces an isomorphism  $\A*_r\G|_U\cong \A|_{r(U)}$ of bundles 
 (recall also that $c_U(x)=(x,1)$ for $U\subseteq X$, by definition).  We will show below that these actions are isomorphic to the action \eqref{enu:iso_actions1.5}.

Let $U\in S$. Recall that $U\times \T \ni (\gamma,z) \mapsto  z \cdot c_U(\gamma) \in \pi^{-1}(U)$
is a homeomorphism  $U\times \T\cong \pi^{-1}(U)$. 
This homeomorphism transfers $\A*_{r}\Sigma|_{\pi^{-1}(U)}$ into a bundle 
over $U\times \T$ whose quotient under the $\T$-action \eqref{eq:circle_action_twisted}
is isomorphic to $\A*_r\G|_U$. Thus $\B|_U=\A*_{r}\Sigma|_{\pi^{-1}(U)}/\T \cong \A*_r\G|_U$ 
and
the isomorphism between algebras presented in items (3) and (2) of the lemma  is given  by $
C_c(\A*_r\G|_U)\ni f \mapsto [f]  \in C_c(\B|_U)\subseteq B_U$ where $[f](\gamma):=[f(\gamma), c_U(\gamma)]$.
In particular, for any $f\in C_c(\A*_r\G|_U)$, $g\in C_c(\A*_r\G|_V)$, $\gamma\in U$ and $\eta\in V$we have
\begin{align*}
[f][g](\gamma\eta)&=[f](\gamma)[g](\eta)=[f(\gamma), c_U(\gamma)] [g(\eta), c_V(\eta)]
=[f(\gamma) \alpha_{c_U(\gamma)}(g(\eta)), c_U(\gamma)c_V(\eta)]
\\
&=[f(\gamma) \alpha_{c_U(\gamma)}(g(\eta)) u(c_{UV}(\gamma\eta) c_V(\eta)^{-1} c_U(\gamma)^{-1})^{-1}, c_{UV}(\gamma\eta) ]
\\
&=[f(\gamma) \alpha_{c_U(\gamma)}(g(\eta)) u( c_U(\gamma)c_V(\eta) c_{UV}(\gamma\eta)^{-1}), c_{UV}(\gamma\eta) ]
\\
&=[f * g (\gamma\eta), c_{UV}(\gamma\eta) ].
\end{align*}
Thus we have a semigroup isomorphism $\bigsqcup_{U\in S}  C_c(\A*_r\G|_U) \to C_c(\B|_U)$. 
It preserves the involution as we have 
\begin{align*}
[f]^*(\gamma^{-1})&=[f](\gamma)^*=[f(\gamma), c_{U}(\gamma)]^*=[\alpha_{c_{U}(\gamma)^{-1}}(f(\gamma)^*), c_{U}(\gamma)^{-1}]
\\
&=[\alpha_{c_{U}(\gamma)}^{-1}(f(\gamma)^*), \big(c_{U^*}(\gamma^{-1})c_U(\gamma)\big)^{-1}c_{U^*}(\gamma^{-1})]
\\
&=[\alpha_{c_{U}(\gamma)}^{-1}(f(\gamma)^*) u(c_{U^*}(\gamma^{-1})c_U(\gamma))^{-1},c_{U^*}(\gamma^{-1})]
\\
&=[f^*](\gamma^{-1}).
\end{align*}

\end{proof}

\begin{rem}
The last lemma could be equally well formulated with  $B=C^*(\A,\G,\Sigma, \alpha)$ or more generally with $B$ being	
any quotient of  $C^*(\A,\G,\Sigma, \alpha)$ by an ideal that intersects $A$ trivially, cf.\ Remark \ref{rem:groupoid_identifications}.
\end{rem}

If  $\mathcal{C}=\bigsqcup_{t\in S} C_t$ is an action of an inverse semigroup 
$S$ by Hilbert bimodules on $A=C_1$, then  $\bigoplus_{t\in S} C_t$ with operations
from $\mathcal{C}$ is naturally a $*$-algebra and using the induced inclusion maps one can 
define a quotient $*$-algebra of $\bigoplus_{t\in S} C_t$ whose full completion
and a completion allowing a faithul weak conditional expectation are respectively the 
full and reduced $C^*$-algebras of $\mathcal{C}$, see   \cite{BussExelMeyer}. 
They coincide with
the $C^*$-algebras associated to $\mathcal{C}$ treated as a Fell bundle over $S$ as defined in 
\cite{Exel:noncomm.cartan}, \cite{BussExel0} or \cite{BussExel}. \label{page:twisted_crossed_products}
The  \emph{full $A\rtimes_{\beta}^{\omega} S$} and \emph{reduced crossed product $A\rtimes_{\beta, r}^{\omega} S$ of an inverse semigroup twisted action} $(A,S,\beta, \omega)$ are by definition the full and reduced $C^*$-algebra of the associated Fell bundle over $S$, 
described in Example  \ref{ex:bundle_from_semigroup_action},  see  \cite{BussExel0}.

As explained below Remark \ref{rem:line_bundles_twists}, the following result can be viewed as a generalisation
of  \cite[Proposition 5.1]{PackerRaeburn} from group actions to groupoid actions.
It also generalises a part of \cite[Theorem 7.2]{BussExel0} that characterises $C^*$-algebras of groupoid twists as crossed products by twisted actions.
\begin{thm}
\label{thm:twisted_action_groupoid_vs_inverse_semigroup}
Let $(\A,\G,\Sigma,\alpha)$ be a twisted groupoid $C^*$-dynamical system with $\G$ \'etale and let  $(A,S,\beta, \omega)$ 
be the twisted inverse semigroup action defined in Lemma \ref{lem:twisted_action_groupoid_vs_inverse_semigroup}, so that $S$ denotes the family of open bisections $U$  of $\G$ on which the $\T$-bundle $\pi:\Sigma\to \G$ is trivial. Let $S_0\subseteq S$ be any  inverse semigroup which is wide in $\G$, as defined before Lemma \ref{lem:twisted_action_groupoid_vs_inverse_semigroup} (one may take $S_0=S)$. Then the corresponding twisted crossed products  are  canonically isomorphic:
$$
C^*(\A,\G,\Sigma, \alpha)\cong A\rtimes_{\beta}^{\omega} S_0,\qquad C^*_r(\A,\G,\Sigma, \alpha)\cong A\rtimes_{\beta,r}^{\omega} S_0.
$$
\end{thm}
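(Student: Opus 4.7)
The plan is to realize both sides of each asserted isomorphism as $C^*$-completions of a common $*$-algebra, namely the convolution algebra $C_c(\bu)$ of the Fell bundle $\bu$ associated to $(\A,\G,\Sigma,\alpha)$ via Example \ref{ex:bundle_from_twisted_action}. First, Lemma \ref{lem:equivalent actions on A} applies verbatim with $S_0$ in place of $S$ (only the sections $c_U$ for $U\in S_0$ are needed), showing that the Fell bundle over $S_0$ built from $(A,S_0,\beta,\omega)$ through Example \ref{ex:bundle_from_semigroup_action} is naturally isomorphic to $\bigsqcup_{U\in S_0} B_U$, where $B_U = \overline{C_c(\bu|_U)} \subseteq C^*_r(\bu)$ inherits its operations. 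By the definitions recalled just before Theorem \ref{thm:twisted_action_groupoid_vs_inverse_semigroup}, the crossed products $A\rtimes_{\beta}^{\omega} S_0$ and $A\rtimes_{\beta,r}^{\omega} S_0$ are the full and reduced $C^*$-algebras of this Fell bundle over $S_0$.

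Next, I would construct a natural $*$-algebra isomorphism between $C_c(\bu)$ and the algebraic Fell bundle crossed product over $S_0$, the latter being the quotient of $\bigoplus_{U\in S_0} C_c(\bu|_U)$ by the inclusion-map identifications $f\sim j_{V,U}(f)$ for $f\in C_c(\bu|_U)$ and $U\subseteq V$ in $S_0$. The map sends $(f_U)_U$ to $\sum_U f_U$, where each $f_U$ is viewed as a section of $\bu$ supported in $U$. Surjectivity follows from the wideness of $S_0$: the compact support of any $f\in C_c(\bu)$ is covered by finitely many bisections $U_1,\ldots,U_n\in S_0$, and a subordinate partition of unity $\{\phi_i\}$ on $\G$ gives $f=\sum_i \phi_i f$ with $\phi_i f\in C_c(\bu|_{U_i})$. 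That the map respects multiplication and involution is a direct comparison of \eqref{eq:groupoid_algebra_operation} with the formulas in Lemma \ref{lem:equivalent actions on A}\eqref{enu:iso_actions1.5}. Injectivity on the quotient -- the delicate step -- amounts to showing that any relation $\sum_U f_U=0$ in $C_c(\bu)$ can be resolved, modulo inclusion relations, by subdividing each $f_U$ across a common refinement of the $U_i$; this is the content of the general correspondence between Fell bundles over \'etale groupoids and Fell bundles over wide inverse semigroups developed in \cite{BartoszRalf2} and \cite{BussExel}.

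Having identified $C_c(\bu)$ with the algebraic crossed product, the full isomorphism $C^*(\A,\G,\Sigma,\alpha)=C^*(\bu)\cong A\rtimes_{\beta}^{\omega} S_0$ follows immediately by universality, as both sides are maximal $C^*$-completions of the same $*$-algebra. For the reduced isomorphism, both $C^*_r(\bu)$ and $A\rtimes_{\beta,r}^{\omega} S_0$ carry canonical faithful conditional expectations onto $A$ -- the restriction map $E$ for the former and the weak conditional expectation of \cite{BussExelMeyer} for the latter -- and these expectations agree on the common dense $*$-algebra $C_c(\bu)$, both sending a section $f$ to $f|_X$. Since a faithful conditional expectation onto $A$ uniquely determines the $C^*$-norm via the associated GNS-type construction (equivalently, via the regular representation \eqref{pix}), the two reduced norms coincide, yielding the reduced isomorphism.

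The main obstacle is the injectivity step in the second paragraph: identifying the kernel of the natural surjection $\bigoplus_{U\in S_0} C_c(\bu|_U)\onto C_c(\bu)$ with the subspace generated by inclusion relations. This is a purely sectional statement, independent of the twist, and its proof -- either by a careful partition-of-unity argument on pairwise intersections or by invoking the general machinery of \cite{BartoszRalf2} -- is where the hypothesis of wideness is essential.
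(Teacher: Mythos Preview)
Your proposal is correct and follows essentially the same approach as the paper. The paper's proof is terser: it invokes Lemma~\ref{lem:equivalent actions on A} to identify the Fell bundle over $S_0$ coming from $(A,S_0,\beta,\omega)$ with the standard bundle $\bigsqcup_{U\in S_0} C_0(\B|_U)$, and then cites \cite[Theorem~2.13]{BussExel}, \cite[Corollary~5.6]{BussMeyer}, \cite[Theorem~8.11]{BussExelMeyer}, and \cite[Propositions~7.6,~7.9]{BartoszRalf2} directly for the isomorphisms $C^*(\bigsqcup_{U\in S_0} C_0(\B|_U))\cong C^*(\B)$ and $C^*_r(\bigsqcup_{U\in S_0} C_0(\B|_U))\cong C^*_r(\B)$. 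Your second and third paragraphs are essentially a sketch of what those cited results prove (the partition-of-unity surjectivity, the injectivity modulo inclusion relations, and the matching of conditional expectations for the reduced case), and you yourself point to the same references for the delicate injectivity step; so the two arguments differ only in how much is unpacked versus outsourced.
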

\begin{proof}
Let $\mathcal{C}=\bigsqcup_{U\in S} C_0(\B|_U)$ be the inverse semigroup action by Hilbert bimodules on $A$ described in Lemma \ref{lem:equivalent actions on A}\eqref{enu:iso_actions1.5}. This is a standard 
inverse semigroup action associated to the Fell bundle $\B$ over $\G$, see  \cite[Example 2.9]{BussExel} or 
\cite[section 7.1]{BartoszRalf2}. Thus restricting $\mathcal{C}$ to any inverse subsemigroup $S_0\subseteq S$    which is wide in $\G$
we get $C^*(\bigsqcup_{U\in S_0} C_0(\B|_U))\cong C^*(\B)$ and $C^*_{r}(\bigsqcup_{U\in S_0} C_0(\B|_U))\cong C^*_r(\B)$ by 
\cite[Theorem 2.13]{BussExel}, or \cite[Corollary 5.6]{BussMeyer}, and \cite[Theorem 8.11]{BussExelMeyer},  cf.\ also
\cite[Propositions 7.6, 7.9]{BartoszRalf2}.
This gives the assertion as $\B$ is isomorphic to the bundle associated to $(\A,\G,\Sigma, \alpha)$ by Lemma \ref{lem:equivalent actions on A}.
\end{proof}

 \section{Multipliers for  twisted groupoid dynamical systems} \label{sect:Fell bundles}

In this section we discuss the notion of mapping multipliers for Fell bundles over groupoids,
and we characterise a natural class of completely positive multipliers for Fell bundles coming from twisted groupoid dynamical systems.  We 
consider the maps acting on the reduced $C^*$-algebras, as opposed to the universal ones, studied in detail for example in \cite{RamsayWalter}, \cite{RenaultFourier} or \cite{RenaultAFA} (albeit only in the case of trivial line bundles).

\begin{defn} Let $\B$ be a Fell bundle over the groupoid $\G$. A \emph{mapping $\B$-multiplier} is a function $G\ni \gamma\mapsto h(\gamma)\in B(B_\gamma)$ which is continuous in the sense  that
the formula 
\begin{equation}\label{eq:multiplier_definition}
(m_h f )(\gamma)= h(\gamma)f(\gamma)\;\;\;\; f\in  C_c(\bu), \gamma \in \G, 
\end{equation}
defines a map (a priori with no continuity requirement) $m_h:C_c(\bu)\to C_c(\bu)$. We say that a  mapping $\B$-multiplier $h$ is \emph{bounded} (resp.\emph{ completely bounded} or \emph{completely positive}) 
if the map $m_h$ extends to to a bounded (resp. completely bounded, completely positive) map on $C_r^*(\bu)$.
\end{defn}
\begin{ex}[Scalar multipliers] Every continuous function $h:\G\to\C$ can be treated as a mapping $\B$-multiplier 
for every Fell bundle $\B$ over $\G$, as then \eqref{eq:multiplier_definition} defines $m_h:C_c(\bu)\to C_c(\bu)$. We call such multipliers \emph{scalar $\B$-multipliers}. Recall that a function $h:\G \to \C$ is called \emph{positive-definite} if for any $x \in \Gz$ and a finite set $F\subseteq \G_x$ the matrix 
$$
\left[h(\gamma\eta^{-1})\right]_{\eta,\gamma \in F}
$$
is positive-definite. Takeishi showed in \cite[Lemma 4.2]{Takeishi} that every compactly supported positive-definite function is a completely positive scalar $\bu$-multiplier for any $\G$-bundle $\bu$ and remarked that the compact support condition can be dropped; we record the corresponding fact, together with its converse, in Proposition \ref{prop:positive-defnite_vs_completely_positive}.
\end{ex}
\begin{ex}[$M(\B|_{\Gz})$-valued multipliers] Let again $\B$ be a Fell bundle  over $\G$. Then $\B|_{\Gz}$ is a continuous field of $C^*$-algebras and so we have a pullback bundle $\B|_{\Gz}*_r\G=\bigsqcup_{\gamma\in \G} B_{r(\gamma)}$. Every fiber $B_\gamma$ is naturally a left $B_{r(\gamma)}$-Hilbert module, and hence also a left 
$M(B_{r(\gamma)})$-Hilbert module. Hence every  \emph{strictly continuous} section $h$ of the multiplier bundle $M(\B|_{\Gz}*_r\G):=\bigsqcup_{\gamma\in \G} M(B_{r(\gamma)})$ can be treated as a mapping $\B$-multiplier. 
\emph{Strict continuity} here means that 
 for any continuous section $a$ of $\B|_{\Gz}*_r\G$ the pointwise product $ha$ is also  continuous as a section of $\B|_{\Gz}*_r\G$.
If $\B$ is saturated then such multipliers coincide with mapping $\B$-multipliers with the values in adjointable operators on the fibers $B_\gamma$, $\gamma\in \G$. Sometimes we will call mapping multipliers of this type simply $\B$-multipliers.

Recall that if $\B$ is a Fell bundle associated to  a
twisted groupoid  $C^*$-dynamical system $(\A,\G,\Sigma,\alpha)$, then $\B|_{\Gz}\cong\A$ with the isomorphism given by $\A\ni a_\gamma \mapsto [a_\gamma,1]$. So
every strictly continuous section $h$ of $M(\A*_r\G)=\bigsqcup_{\gamma\in \G} M(A_{r(\gamma)})$ is a $\B$-multiplier;  we have
$(m_h  f)(\dot{\sigma})=[h(\dot{\sigma})a,\sigma]$ for each $\sigma \in \Sigma$, $f\in C_c(\B)$ and $a \in A_{r(\sigma)}$ such that $f(\dot{\sigma}) = [a, \sigma]$.
\end{ex}
In general it is not clear how to describe explicitly completely positive mapping $\B$-multipliers. But we can do it
for $\B$-multipliers on Fell bundles coming from twisted groupoid  $C^*$-dynamical systems.

\begin{defn}\label{defn:positive_definite} Let  $(\A,\G,\Sigma,\alpha)$ be a twisted groupoid  $C^*$-dynamical system.
We say that a section $h$ of $M(\A*_r\G)$ is \emph{positive-definite} if for any $x \in \Gz$ and any finite set $F
\subseteq \G_x$, for some section $c:F\to \Sigma$   the matrix 
$$
\left[\alpha_{c(\gamma)}^{-1} \left(h(\gamma\eta^{-1})\right)\right]_{\eta,\gamma \in F} \in M_{|F|}(M(A_x))
$$
is positive.
\end{defn}

We will now show that the definition above in fact does not depend on the choice of the section, and discuss the central $\B$-multipliers.

\begin{lem}\label{lem:independence_of_section}
Let  $(\A,\G,\Sigma,\alpha)$ be a twisted groupoid  $C^*$-dynamical system and  let $h$ be a section of $M(\A*_r\G)$.
\begin{enumerate}
\item\label{enu:independence_of_section1} Let  $x \in \Gz$ and let $F
\subseteq \G_x$ be a finite set.
If there is  a  section $c:F\to \Sigma$ such that  the matrix 
$
\left[\alpha_{c(\gamma)}^{-1} \left(h(\gamma\eta^{-1})\right)\right]_{\eta,\gamma \in F}
$ is positive, then the corresponding matrix is positive for any section $c':F\to \Sigma$. 
\item \label{enu:independence_of_section2} 
The map $m_h$ is
a $C_c(\A)$-bimodule map if and only if  
$
h(\gamma)\in ZM(A_{r(\gamma)})$, for all $\gamma \in \G.
$

\end{enumerate}
\end{lem}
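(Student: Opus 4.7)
For part (1), I would use the principal $\T$-bundle structure of $\pi\colon\Sigma\to\G$. Since each fibre of $\pi$ is a $\T$-torsor, for each $\gamma\in F$ there is a unique $z_\gamma\in\T$ with $c'(\gamma)=z_\gamma\cdot c(\gamma)=c(\gamma)\cdot(s(\gamma),z_\gamma)$, the second equality following from the centrality of the extension. Writing $\alpha_{c'(\gamma)}=\alpha_{c(\gamma)}\circ\alpha_{(s(\gamma),z_\gamma)}=\alpha_{c(\gamma)}\circ\mathrm{Ad}_{u_x(z_\gamma)}$ via Definition \ref{def:twisted_groupoid_action}(2), and using the identity $\alpha_{c(\gamma)}^{-1}\bigl(u_{r(\gamma)}(z_\gamma)\bigr)=u_x(z_\gamma)$ coming from Definition \ref{def:twisted_groupoid_action}(3), I would derive the transformation rule
\[
\alpha_{c'(\gamma)}^{-1}(b)=u_x(z_\gamma)^{*}\,\alpha_{c(\gamma)}^{-1}(b)\,u_x(z_\gamma),\qquad b\in M(A_{r(\gamma)}).
\]
From this I expect the new matrix $M'$ to be obtained from $M$ by a unitary conjugation $M'=W^{*}MW$ in $M_{|F|}(M(A_x))$, where $W$ is a suitable diagonal unitary built from the $u_x(z_\gamma)$. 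Since unitary conjugation preserves positivity in any $C^{*}$-algebra, this gives $M'\geq 0$ whenever $M\geq 0$.

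For part (2), the strategy is a direct pointwise computation on $C_c(\B)$. Identifying $A=C_0(\A)$ with sections of $\B$ supported on $X=\G^{(0)}$, the convolution formula gives, for $a\in C_c(\A)$, $f\in C_c(\B)$ and $\gamma\in\G$,
\[
(af)(\gamma)=a(r(\gamma))\cdot f(\gamma),\qquad (fa)(\gamma)=f(\gamma)\cdot a(s(\gamma)).
\]
Hence $m_h(fa)=m_h(f)\cdot a$ holds automatically by associativity of the Hilbert $(A_{r(\gamma)},A_{s(\gamma)})$-bimodule action on $B_\gamma$, so $m_h$ is always right $A$-linear. The condition $m_h(af)=a\cdot m_h(f)$ reduces fibrewise to $h(\gamma)\cdot(b\cdot v)=b\cdot(h(\gamma)\cdot v)$ for all $b\in A_{r(\gamma)}$ and $v\in B_\gamma$. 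Writing $v=[a,\sigma]\in B_\gamma$ and using the multiplication formula, this becomes $(h(\gamma)b-bh(\gamma))\,a=0$; as $a$ ranges over the $C^{*}$-algebra $A_{r(\gamma)}$ (which has an approximate unit), this is equivalent to $h(\gamma)b=bh(\gamma)$ in $M(A_{r(\gamma)})$ for every $b\in A_{r(\gamma)}$, i.e.\ $h(\gamma)\in ZM(A_{r(\gamma)})$. Combining the two directions yields (2).

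The point I anticipate to be genuinely delicate is the matrix-level claim in (1): the transformation rule modifies each entry $M_{\eta,\gamma}$ by conjugation by $u_x(z_\gamma)$, a unitary depending only on the column index $\gamma$, whereas the standard recipe $W^{*}MW$ with $W=\mathrm{diag}(u_x(z_\gamma))$ would produce the entry $u_x(z_\eta)^{*}M_{\eta,\gamma}u_x(z_\gamma)$. Reconciling the two requires unpacking the positivity statement as a non-negativity condition on $\sum_{\eta,\gamma\in F}a_\eta^{*}M'_{\eta,\gamma}a_\gamma$ and performing a suitable change of variables in $(a_\gamma)_{\gamma\in F}\in A_x^{|F|}$; getting this bookkeeping right is the crux of the argument, after which positivity is preserved for the usual $C^{*}$-algebraic reason.
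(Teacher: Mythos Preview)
Your proposal is correct and follows essentially the same approach as the paper. For part~(1), the paper likewise writes $c=fc'$ for a $\T$-valued function $f$ on $F$, derives the same transformation rule $\alpha_{z\sigma}^{-1}(\cdot)=u_{s(\sigma)}(z)^{-1}\alpha_\sigma^{-1}(\cdot)u_{s(\sigma)}(z)$ from Definition~\ref{def:twisted_groupoid_action}(2)--(3), and then---exactly as you anticipate in your ``delicate point''---bypasses any matrix-level conjugation identity by working directly with the sum characterisation of positivity and substituting $b_\gamma:=a_\gamma u_x(f(\gamma))$; your diagnosis that $M'\neq W^*MW$ entrywise is correct, and the change-of-variables fix is precisely what the paper does. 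For part~(2), the paper carries out the same pointwise computation: right $A$-linearity is automatic, and for left $A$-linearity one picks $f\in C_c(\A)$ with $f(r(\sigma))=a$ and $g\in C_c(\B)$ with $g(\dot\sigma)=[b,\sigma]$, compares $m_h(\dot f\cdot g)(\dot\sigma)$ with $(\dot f\cdot m_h(g))(\dot\sigma)$, and obtains $h(\dot\sigma)ab=ah(\dot\sigma)b$ for all $a,b\in A_{r(\sigma)}$, whence centrality.
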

\begin{proof}\eqref{enu:independence_of_section1}
Consider two sections $c,c':F \to \Sigma$ for $x, F$ as above. Then there is a function $f:F\to \T$ such that $c=fc'$. Note that using Definition \ref{def:twisted_groupoid_action}\eqref{def:twisted_groupoid_action2} we obtain for any element $z \in \mathbb{T}$ and $\sigma \in \Sigma$ the following equality: $\alpha_{z\sigma} (\cdot)= u_{r(\sigma)}(z)\alpha_\sigma(\cdot)
u_{r(\sigma)}(z)^{-1}$, and further by Definition \ref{def:twisted_groupoid_action}\eqref{def:twisted_groupoid_action3} we obtain $\alpha_{z\sigma} (\cdot)= \alpha_\sigma(u_{s(\sigma)}(z) \cdot u_{s(\sigma)}(z)^{-1})
$. This is in turn equivalent to $\alpha_{z\sigma}^{-1} (\cdot)= u_{s(\sigma)}(z)^{-1}\alpha_\sigma^{-1}(\cdot ) u_{s(\sigma)}(z) 
$.
Thus for any $\{a_\gamma\}_{\gamma\in F}\subseteq M(A_x)$ putting $b_\gamma:=a_{\gamma}u_x(f(\gamma))$, $\gamma \in F$ we get 
$$
\sum_{\gamma,\eta \in F}  a_{\gamma} \alpha_{c'(\gamma)}^{-1} \left(h(\gamma\eta^{-1})\right) a_{\eta}^*
= \sum_{\gamma,\eta \in F}  b_{\gamma} \alpha_{c(\gamma)}^{-1} \left(h(\gamma\eta^{-1})\right) b_{\eta} ^*. 
$$
This proves that  positivity of $[\alpha_{c(\gamma)}^{-1} (h(\gamma\eta^{-1}))]_{\eta,\gamma \in F}$ implies that of $
[\alpha_{c'(\gamma)}^{-1} (h(\gamma\eta^{-1}))]_{\eta,\gamma \in F}.
$

\eqref{enu:independence_of_section2}
The formula \eqref{eq:multiplier_definition} implies that each $m_h$  is a right $C_c(\A)$-module map. Hence it is also  a $C_c(\A)$-bimodule map if  
$
h(\gamma)\in ZM(A_{r(\gamma)})$, for all $\gamma \in G$. Conversely, assume $m_h$ is a left $A$-module map. 
 For arbitrary  $a,b\in A_{r(\sigma)}$, $\sigma \in \Sigma$,   
there are  functions $f\in C_c(\A)$ and  $g\in C_c(\B)$ supported on open bisections and such that
$f(r(\sigma))=a$, $g(\dot{\sigma})=[b,\sigma]$. Then, denoting the canonical image of $f$ in $ C_c(\B)$ by $\dot{f}$ (so that $\dot{f}(\dot{\sigma}) = [f(\sigma), r(\sigma)]$), we have
$$
[h(\dot{\sigma})ab,\sigma]=m_h(\dot{f}\cdot g)(\dot{\sigma})=(\dot{f}\cdot m_h(g))(\dot{\sigma})=[a,r(\sigma)][h(\dot{\sigma})b,\sigma]=[ah(\dot{\sigma})b, \sigma].
$$
 Hence $h(\dot{\sigma})ab=ah(\dot{\sigma})b$, for every $a,b\in A_{r(\sigma)}$. This implies $h(\dot{\sigma})\in ZM(A_{r(\sigma)})$.
 \end{proof}

Following the classical argument from the context of Herz-Schur multipliers (see for example \cite[Subsection 5.6]{BO}) 
we obtain the next result.
\begin{prop}\label{prop:pdCrossed}
Let  $\B$ be a  Fell bundle associated to a twisted groupoid  $C^*$-dynamical system
$(\A,\G,\Sigma,\alpha)$, and let  $h$  be  a section of
 $M(\A*_r\G)$. The following conditions are equivalent:
\begin{enumerate}
\item\label{it:pdCrossed1} $h$ is a completely positive  $\B$-multiplier, i.e \eqref{eq:multiplier_definition}  defines
a completely positive map $m_h:C_r^*(\bu)\to C_r^*(\bu)$;
\item\label{it:pdCrossed2} $h$ is strictly continuous, bounded and  positive-definite.
\end{enumerate}
If these equivalent conditions hold then 
$h$ is  central (i.e. $
h(\gamma)\in ZM(A_{r(\gamma)})$, for all $\gamma \in \G$),  $m_h$ is a $C_0(\A)$-bimodule map, and
$
\|m_h\|=\sup_{\gamma\in \G} \|h(\gamma)\|=\sup_{x\in \Gz}\|h(x)\|.
$
	\end{prop}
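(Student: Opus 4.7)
The plan is to prove the equivalence together with the supplementary statements by passing between $m_h$ and the pointwise data of $h$. The direction (1) $\Rightarrow$ (2) is a ``test against bisections'' argument, while (2) $\Rightarrow$ (1), which is the crux of the proof, is a Fell-bundle version of the classical dilation showing that positive-definite Schur-type kernels give completely positive multipliers. The main obstacle will be the bookkeeping of the twist in this Schur/Stinespring construction.

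For (1) $\Rightarrow$ (2), I would use Remark~\ref{rem:groupoid_identifications}: for every open bisection $U\in S$, the space $C_0(\bu|_U)$ embeds isometrically into $C_r^*(\bu)$, and $m_h$ acts on this subspace by pointwise multiplication by $h|_U$. Thus $\sup_{\gamma\in U}\|h(\gamma)\|\leq \|m_h\|$, and a covering argument gives boundedness of $h$ together with $\sup_\gamma\|h(\gamma)\|\leq \|m_h\|$; strict continuity follows because $m_h$ preserves $C_c(\bu)$, so $\gamma\mapsto h(\gamma) a(r(\gamma))$ is continuous for every continuous section $a$ of $\A$. To extract positive-definiteness, fix $x\in \Gz$ and a finite $F\subseteq \G_x$, choose for each $\gamma\in F$ a bisection $U_\gamma\ni \gamma$ with trivialising section $c_{U_\gamma}$, and for prescribed elements $\{a_\gamma\}\subseteq A_x$ build bump-like sections $f_\gamma\in C_c(\bu|_{U_\gamma})$ peaked at $\gamma$ with values approaching $[a_\gamma, c_{U_\gamma}(\gamma)]$. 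The column matrix $(f_\gamma)\in M_{|F|,1}(C_r^*(\bu))$ yields a positive $F^*F\in M_{|F|}(C_r^*(\bu))$; applying $\mathrm{id}_{M_{|F|}}\otimes m_h$, the conditional expectation $E$, and evaluating at $x$ (via a bounded approximate unit if necessary), one obtains in the limit the matrix $[a_\eta^*\alpha_{c(\gamma)}^{-1}(h(\gamma\eta^{-1})) a_\gamma]_{\eta,\gamma\in F}$ as a positive element of $M_{|F|}(A_x)$. Since the $a_\gamma$ are arbitrary, this gives the required positive-definiteness of $h$.

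For (2) $\Rightarrow$ (1), I would fix $x\in \Gz$ and analyse the matrix structure of $\pi_x$ from~\eqref{pix} on $V_x=\bigoplus_{\gamma\in \G_x} B_\gamma$. A direct computation shows that $\pi_x(m_h f)$ is obtained from $\pi_x(f)$ by left-multiplying the $(\gamma,\beta)$-entry $B_\beta\ni\xi\mapsto f(\gamma\beta^{-1})\xi\in B_\gamma$ by the multiplier $h(\gamma\beta^{-1})\in M(A_{r(\gamma)})$; that is, under $\pi_x$ the map $m_h$ becomes Schur multiplication by the kernel $(\gamma,\beta)\mapsto h(\gamma\beta^{-1})$. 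Positive-definiteness of $h$ gives a Kolmogorov decomposition of the bounded $M(A_x)$-valued kernel $K_x(\gamma,\eta)=\alpha_{c(\gamma)}^{-1}(h(\gamma\eta^{-1}))$: completing $\bigoplus_{\gamma\in \G_x} A_x$ with respect to the $M(A_x)$-valued pre-inner product induced by $K_x$ produces a Hilbert $A_x$-module $\mathcal{K}_x$ together with adjointable maps $v_\gamma\colon A_x\to \mathcal{K}_x$ realising $K_x$ as a Gram matrix, with $\|v_\gamma\|^2\leq \sup_{x\in \Gz}\|h(x)\|$. Twisting through $\alpha_{c(\gamma)}$ to match the left $A_{r(\gamma)}$-module structure on $B_\gamma$, I would define an adjointable map
\[
W_x\colon V_x\longrightarrow V_x\otimes_{A_x}\mathcal{K}_x,\qquad B_\gamma\ni \xi_\gamma\longmapsto \xi_\gamma\otimes_{A_x} v_\gamma,
\]
and verify the intertwining identity $W_x^*(\pi_x(f)\otimes 1_{\mathcal{K}_x})W_x=\pi_x(m_h f)$ for all $f\in C_c(\bu)$. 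Summing over $x\in \Gz$ realises $m_h$ as a Stinespring-type compression of a $*$-homomorphism, hence completely positive and bounded with $\|m_h\|\leq \sup_{x\in \Gz} \|h(x)\|$.

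Finally, the supplementary assertions fall out quickly. Combining the two directions gives $\|m_h\|=\sup_\gamma\|h(\gamma)\|$; applying positive-definiteness to the pair $\{s(\gamma),\gamma\}\subseteq \G_{s(\gamma)}$ yields the $2\times 2$ inequality $\|h(\gamma)\|^2\leq \|h(s(\gamma))\|\cdot \|h(r(\gamma))\|$, hence $\sup_\gamma\|h(\gamma)\|=\sup_{x\in \Gz}\|h(x)\|$. The $C_0(\A)$-bimodule property of $m_h$ is immediate from the Schur realisation, since left multiplication by $C_0(\A)$-valued sections corresponds to diagonal left multiplication which commutes with $m_h$, and then Lemma~\ref{lem:independence_of_section}\eqref{enu:independence_of_section2} gives centrality of $h$.
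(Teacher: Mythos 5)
Your overall strategy coincides with the paper's: you prove \eqref{it:pdCrossed1}$\Rightarrow$\eqref{it:pdCrossed2} by testing $m_h$ against sections supported on bisections and against approximate units, and \eqref{it:pdCrossed2}$\Rightarrow$\eqref{it:pdCrossed1} by a fibrewise Kolmogorov--Stinespring dilation of the kernel $K_x(\gamma,\eta)=\alpha_{c(\gamma)}^{-1}(h(\gamma\eta^{-1}))$ through the regular representations $\pi_x$. The first direction and the supplementary assertions are essentially correct (your $2\times 2$ argument for $\sup_\gamma\|h(\gamma)\|=\sup_{x\in\Gz}\|h(x)\|$ is a harmless variant of the paper's, which derives both equalities from $\|m_h\|$).

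There is, however, a concrete problem with the intertwining identity in \eqref{it:pdCrossed2}$\Rightarrow$\eqref{it:pdCrossed1}: the tensor factors are in the wrong order. With $W_x\colon V_x\to V_x\otimes_{A_x}\mathcal K_x$, $\xi_\gamma\mapsto\xi_\gamma\otimes v_\gamma$, the matrix entries of the compression are
\[
\langle W_x\xi_\gamma,(\pi_x(f)\otimes 1)W_x\xi_\eta\rangle=\bigl\langle v_\gamma,\ \langle\xi_\gamma,\pi_x(f)\xi_\eta\rangle_{A_x}\cdot v_\eta\bigr\rangle ,
\]
so $W_x^*(\pi_x(f)\otimes 1)W_x$ depends only on the $A_x$-valued inner products $\langle\xi_\gamma,\pi_x(f)\xi_\eta\rangle_{A_x}=\xi_\gamma^*f(\gamma\eta^{-1})\xi_\eta$, which are computed on the \emph{source} side of $B_\gamma$. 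But $\pi_x(m_hf)$ requires inserting $h(\gamma\eta^{-1})$ \emph{between} $\xi_\gamma^*$ and $f(\gamma\eta^{-1})\xi_\eta$, i.e.\ the kernel must act by left multiplication on the range side $M(A_{r(\gamma)})$; this cannot be recovered from the source-side inner products without commutation hypotheses you do not have (centrality of $h$ is not available here — it is deduced from \eqref{it:pdCrossed1}). The fix is to attach the dilation module on the other side, which is what the paper does: regard $\L(A_x,\mathcal K_x)$ as a Hilbert $M(A_x)$-module $\Hilt_x$, equip $V_x\cong\bigoplus_{\gamma\in\G_x}A_{r(\gamma)}$ with the \emph{twisted} left $M(A_x)$-action $\phi_x(m)\bigl(\bigoplus a_\gamma\bigr)=\bigoplus\alpha_{c(\gamma)}(m)a_\gamma$, and set $\Theta_x\colon V_x\to \Hilt_x\otimes_{M(A_x)}V_x$, $\bigoplus b_\gamma\mapsto\bigoplus\zeta(\gamma)\otimes b_\gamma$. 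Then $\Theta_x^*(1\otimes\pi_x(f))\Theta_x=\pi_x(m_hf)$ does hold, because $\phi_x\bigl(\zeta(\gamma)^*\zeta(\eta)\bigr)=\phi_x\bigl(\alpha_{c(\gamma)}^{-1}(h(\gamma\eta^{-1}))\bigr)$ acts on the $\gamma$-component exactly by left multiplication by $h(\gamma\eta^{-1})$. With this reordering (and the accompanying definition of $\phi_x$, which is the precise meaning of your phrase ``twisting through $\alpha_{c(\gamma)}$'') your argument goes through and agrees with the paper's proof.
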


\begin{proof}
\eqref{it:pdCrossed1}$\Rightarrow$\eqref{it:pdCrossed2} 
Clearly,  $h$ is a mapping $\B$-multiplier, i.e. it defines $m_h:C_c(\bu)\to C_c(\bu)$, if and only if $h$ is strictly continuous. Since $m_h:C_r^*(\bu)\to C_r^*(\bu)$ is completely positive, it is automatically bounded and $\|m_h\|=\sup_{\lambda}\|m_h(\mu_\lambda)\|$ for an approximate unit $(\mu_\lambda)$ in $A:=C_0(\A)$. This readily implies
that $\|m_h\|=\sup_{x\in \Gz}\|h(x)\|\leq \sup_{\gamma\in \G} \|h(\gamma)\|$. Moreover, for any $\gamma\in \G$ and 
$\varepsilon>0$ we may find an open bisection $U$ containing $\gamma$ and  a section $a\in C_c(\B|_{U})\subseteq C_c(\B)\subseteq C^*_r(\B)$ such that
 $\|h(\gamma)\|\leq \|h(\gamma)a(\gamma)\|+\varepsilon$ and $\|a\|=1$ (cf.\ the second part of Remark \ref{rem:groupoid_identifications}). Hence 
$\|h(\gamma)\|\leq \|m_h(a)(\gamma)\|+\varepsilon\leq \|m_h\|+\varepsilon$.
This implies that $\sup_{\gamma\in \G} \|h(\gamma)\| =\|m_h\|$, and so $h$ is bounded.
Furthermore, since  $m_h:C_r^*(\bu)\to C_r^*(\bu)$ is (completely) positive it is $*$-preserving, 
and by its very definition \eqref{eq:multiplier_definition}, $m_h$ is a right $C_0(\A)$-module map. Hence $m_h$ is a  $C_0(\A)$-bimodule map:
$
m_h(ab)=m_h(b^*a^*)^*=a m_h(b)$, $a\in C_c(\A)$, $b\in C_c(\B)$. Thus $h$ is central by Lemma \ref{lem:independence_of_section} \eqref{enu:independence_of_section2}.  

To see that $h$ is positive-definite,  fix $x\in \G^{(0)}$ 
and a finite set $F\subseteq  \G_x$. For every $\gamma \in F$ let $f_\gamma\in C_c(\bu)$ be a section
 supported on an open  bisection. The matrix 
\(
(f_\gamma f_\eta^*)_{\gamma, \eta \in F}\in M_{|F|}(C^*_r(\bu))
\)
 is positive (recall that the representation $\pi_x$ and the module $V_X$ were introduced in the formula \eqref{pix} and the discussion preceding it). So also the operator
\[ 
B:=\pi_x^{(|F|)} \circ m_h^{(|F|)}  ((f_\gamma f_\eta^*)_{\gamma, \eta \in F}) \in B(V_x^{\oplus |F|})   
\]
is positive.  Let us choose a section $c:F\to \Sigma$  and write $f_\gamma(\gamma)=[a_{\gamma},c(\gamma)] $ for  \(a_\gamma \in A_{r(\gamma)}\)
and $\gamma \in F$. Note that  
$$
(f_\gamma f_\eta^*) (\gamma \eta^{-1}) =
f_\gamma(\gamma) f_\eta(\eta)^* = [a_{\gamma},c(\gamma)]  [\alpha_{c(\eta)^{-1}}(a_{\eta}^*),c(\eta)^{-1}]
=[a_\gamma\alpha_{c(\gamma)c(\eta)^{-1}} (a_\eta^*),c(\gamma)c(\eta)^{-1}].
$$
Let then $\xi=\bigoplus_{\eta\in F} \xi_{\eta}= \bigoplus_{\eta\in F} \bigoplus_{\beta \in \G_x} \xi_\eta(\beta)\in V_x^{\oplus |F|}$ be such that 
$ \xi_\eta(\beta)\in B_{\beta}$ is zero unless $\beta=\eta$. Write $ \xi_\eta(\eta)=[b_{\eta},c(\eta)]$ for 
\(b_\eta \in A_{r(\eta)}\) and $\eta \in F$. We have
\begin{align*} 
0\leq\langle \xi, B \xi \rangle &
= \sum_{\gamma, \eta \in F} \langle \xi_\gamma , \pi_x(m_h (f_\gamma f_\eta^*)) \xi_\eta \rangle 
= \sum_{\gamma, \eta \in F} \langle \xi_\gamma(\gamma) , m_h (f_\gamma f_\eta^*)(\gamma\eta^{-1})\xi_\eta(\eta)\rangle
\\&
=\sum_{\gamma, \eta \in F}  \left\langle [b_{\gamma},c(\gamma)], [h(\gamma \eta^{-1}) a_\gamma\alpha_{c(\gamma)c(\eta)^{-1}} (a_\eta^*),c(\gamma)c(\eta)^{-1}] \cdot [b_{\eta},c(\eta)]\right\rangle
\\&
=\sum_{\gamma, \eta \in F}   \alpha_{c(\gamma)}^{-1} \left(b_\gamma^* h(\gamma \eta^{-1}) a_\gamma\alpha_{c(\gamma)c(\eta)^{-1}} (a_\eta^*) \alpha_{c(\gamma)c(\eta)^{-1}}(b_\eta)\right)
\\& 
=\sum_{\gamma, \eta \in F}  \alpha_{c(\gamma)}^{-1} (b_\gamma^*) \alpha_{c(\gamma)}^{-1} (h(\gamma \eta^{-1})) 
\alpha_{c(\gamma)}^{-1} (a_\gamma) \alpha_{c(\eta)^{-1}}  (a_\eta^*b_\eta).
\end{align*}
It now remains to note that we can choose the functions $f^i_\gamma$ so that the corresponding $a^i_\gamma$  approximate $1_{M(A_{r(\gamma)})}$ in the strict topology. Then taking the limit over $i$ we obtain
\[
0\leq \sum_{\gamma, \eta \in F}  \alpha_{c(\gamma)}^{-1} (b_\gamma^*) \alpha_{c(\gamma)}^{-1} (h(\gamma \eta^{-1})) \alpha_{c(\eta)}^{-1} (b_\eta).\]
Now as the choice of elements $b_\gamma$ was arbitrary, the matrix 
$ \left[\alpha_{c(\gamma)}^{-1} \left(h(\gamma\eta^{-1})\right)\right]_{\gamma, \eta \in F} $ is positive in 
$M_{|F|}(M(A_x))$.

\eqref{it:pdCrossed2}$\Rightarrow$\eqref{it:pdCrossed1} Since $h$ is bounded we may normalize it so that it is a strictly continuous section of $M(\A*_r\G)$  bounded by $1$. Fix $x \in \G^{(0)}$ 
and choose a  section $c:\G_x\to \Sigma_x\subseteq \Sigma$. 
By Lemma  \ref{lem:independence_of_section}, the formula 
$k_x(\gamma, \eta)=\alpha_{c(\gamma)^{-1}} h(\gamma\eta^{-1})$, $\gamma, \eta \in \G_x$ defines a positive-definite 
 kernel $k_x:\G_x \times \G_x \to M(A_x)$. Hence by \cite[Theorem 2.3]{Mur} there exists a Hilbert $A_x$-module $\Hil_x$ and a map $\zeta: \G_x \to  \L(A_x,\Hil_x)$ (with values in the unit ball) such that for each $\kappa, \beta \in \G_x$ we have  
\begin{equation}\label{eq:Murphy_given}
\alpha_{c(\kappa)}^{-1} (h(\kappa\beta^{-1})) = \zeta (\kappa)^* \zeta (\beta).
\end{equation}
Note that we can view   $\L(A_x,\Hil_x)$ as a Hilbert module $\Hilt_x$ over $M(A_x)\cong \L(A_x)$, cf.\ \cite[Proposition 1.2]{RaeburnThompson}.

The isomorphisms   $A_{r(\gamma)}\ni a_\gamma \mapsto  [a_{\gamma},c(\gamma)]\in B_\gamma$, extended to direct sums, allow us to identify the right $A_x$-module $V_x = \bigoplus_{\gamma \in \G_x} B_\gamma $ (described in the construction of the regular representation)  with the $A_x$-module $\bigoplus_{\gamma \in \G_x} A_{r(\gamma)}$ with operations   given by 
$$
\left(\bigoplus_{\gamma \in \G_x} a_{\gamma}\right )\cdot a=\bigoplus_{\gamma \in \G_x} a_{\gamma}\alpha_{c(\gamma)}(a), \qquad \left\langle \bigoplus_{\gamma \in \G_x} b_{\gamma}, \bigoplus_{\gamma \in \G_x} d_{\gamma}\right\rangle_{A_x}=\sum_{\gamma \in \G_x} \alpha_{c(\gamma)}^{-1}(b_{\gamma}^*d_{\gamma}).
$$
We  equip $V_x$ with a natural left action  $\phi_x:M(A_x)\to \L(V_x)$, given by the formula:
\[
 \phi_x (m) \cdot \bigoplus_{\gamma \in \G_x} a_{\gamma}:= \bigoplus_{\gamma \in \G_x} \alpha_{c(\gamma)}(m)a_{\gamma},\qquad m \in M(A_x),
\bigoplus_{\gamma \in \G_x} a_{\gamma}\in V_x.\]
 Thus we can form a new $A_x$-Hilbert module $W_x=\Hilt_x \otimes_{M(A_x)} V_x$. Consider the map $\Theta_x: V_x \to W_x$ given by 
\[
\Theta_x \left( \bigoplus_{\gamma \in \G_x} b_\gamma \right) = \bigoplus_{\gamma \in \G_x} \zeta(\gamma) \otimes b_\gamma. 
\]
One readily verifies that $\Theta_x$ is a  contraction and we can define a completely positive contractive map $\Psi_x: B(V_x) \to B(V_x)$ by the formula 
\[ 
\Psi_x (T) = \Theta_x^*(1_{\Hilt_x} \otimes T) \Theta_x, \;\;\; T \in B(V_x).
\]
The formula \eqref{pix} allows us to check that we have
\begin{equation}\label{eq:multiplier_over_pi} 
\pi_x (m_h f) = \Psi_x(\pi_x(f)), \;\;\; f \in C_c(\bu). 
\end{equation}
Indeed, by linearity and continuity, to establish the above equality it suffices to verify that  for each 
$\eta, \gamma \in \G_x$ and $b_\gamma \in B_\gamma, d_\eta \in B_\eta$ we have
$\langle b_\gamma, \pi_x (m_h f) d_\eta \rangle_{A_x} = \langle  b_\gamma, \Psi_x(\pi_x(f)) d_\eta \rangle_{A_x}$.  
Using \eqref{eq:Murphy_given} we get $(m_hf) (\gamma \eta^{-1})d_{\eta}=\phi_x(\langle \zeta(\gamma), \zeta(\eta) \rangle_{M(A_x)})f (\gamma \eta^{-1})d_{\eta}$. Therefore, using the definition of the internal tensor product of Hilbert modules and of the representation $\pi_x$ we obtain 
\begin{align*} 
\langle b_\gamma, (m_hf) (\gamma \eta^{-1}) d_\eta \rangle_{A_x} &=  \langle b_\gamma, \phi_x(\langle \zeta(\gamma), \zeta(\eta) \rangle_{M(A_x)}) \cdot ( f(\gamma \eta^{-1}) \cdot d_\eta) \rangle_{A_x}
\\&=
\langle \zeta(\gamma) \otimes b_\gamma, \zeta(\eta) \otimes \pi_x(f) d_\eta \rangle_{A_x} 
\\&=  
\langle \Theta_x (b_\gamma), (I \otimes \pi_x(f)) \Theta_x (d_\eta) \rangle_{A_x} 
\\&=  
 \langle  b_\gamma, \Psi_x(\pi_x(f)) d_\eta \rangle_{A_x}. 
\end{align*}
Now, as the direct sum of the representations $\pi_x$ is faithful on $C_r^*(\bu)$, \eqref{eq:multiplier_over_pi}  implies that the map $m_h$ is contractive and completely positive.
\end{proof}

A very similar proof gives also the following result: note that we allow below arbitrary $\G$-bundles, but consider only scalar multipliers.

\begin{prop}\label{prop:positive-defnite_vs_completely_positive}
	Let $h:\G\to \C$ be a bounded continuous function and let $\bu$ be a $\G$-bundle. Consider the following conditions:
	\begin{itemize}
		\item[(i)] $h$ is positive-definite;
		\item[(ii)] $h$ is a completely positive scalar $\bu$-multiplier.
	\end{itemize}	
	Then (i) implies (ii), and if $\bu$ is ``strongly saturated'' in the sense that 
	 for every $\gamma \in \G$ there is an approximate unit for $B_{s(\gamma)}$
contained in $\langle B_{\gamma}, B_{\gamma}\rangle_{B_{s(\gamma)}}$,
	then (ii) implies (i).
	Thus (i)$\Leftrightarrow$(ii)  for continuous line bundles and for  bundles coming from twisted groupoid actions.
\end{prop}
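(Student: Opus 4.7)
The plan is to adapt the proof of Proposition \ref{prop:pdCrossed} to the more general setting of an arbitrary Fell bundle with a scalar-valued multiplier. The absence of the twist $\alpha$ simplifies several computations, while the strong saturation hypothesis in the converse direction plays the role of the approximate-units argument used at the end of the proof of Proposition \ref{prop:pdCrossed}.

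For the implication (i)$\Rightarrow$(ii), I fix $x\in\Gz$ and note that $k_x(\gamma,\eta):=h(\gamma\eta^{-1})$ is a $\C$-valued positive-definite kernel on $\G_x\times\G_x$. The classical Kolmogorov--Aronszajn construction (equivalently, the scalar case of \cite[Theorem 2.3]{Mur}) produces a Hilbert space $\Hil_x$ and a bounded map $\zeta:\G_x\to\Hil_x$ satisfying $h(\gamma\eta^{-1})=\langle\zeta(\gamma),\zeta(\eta)\rangle$. I then form the exterior tensor product Hilbert $B_x$-module $W_x:=\Hil_x\otimes V_x$, define the contraction $\Theta_x:V_x\to W_x$ via $\Theta_x(\bigoplus_\gamma b_\gamma):=\bigoplus_\gamma \zeta(\gamma)\otimes b_\gamma$, and set $\Psi_x(T):=\Theta_x^*(1_{\Hil_x}\otimes T)\Theta_x$, a completely positive contraction on $B(V_x)$. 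A direct computation mirroring the corresponding step of the proof of Proposition \ref{prop:pdCrossed} (without the bookkeeping for $\alpha$) establishes $\pi_x(m_h f)=\Psi_x(\pi_x(f))$ for $f\in C_c(\bu)$; combining this with the faithfulness of $\bigoplus_{x\in\Gz}\pi_x$ on $C_r^*(\bu)$ yields that $m_h$ extends to a completely positive contraction.

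For the implication (ii)$\Rightarrow$(i) under strong saturation, I fix $x\in\Gz$, a finite set $F\subseteq\G_x$ and scalars $\{\lambda_\gamma\}_{\gamma\in F}\subseteq\C$, aiming to show $\sum_{\gamma,\eta\in F}\overline{\lambda_\gamma}h(\gamma\eta^{-1})\lambda_\eta\geq 0$. Strong saturation supplies, for each $\gamma\in F$, elements $b_\gamma^i,c_\gamma^i\in B_\gamma$ such that the net $(b_\gamma^i)^*c_\gamma^i$ is an approximate unit for $B_x$ (after passing to a common refinement of index sets). Choosing sections $f_\gamma^i\in C_c(\bu|_{U_\gamma})$ supported on open bisections $U_\gamma\ni\gamma$ with $f_\gamma^i(\gamma)=c_\gamma^i$, the positivity of the rank-one matrix $(f_\gamma^i(f_\eta^i)^*)_{\gamma,\eta\in F}\in M_{|F|}(C_r^*(\bu))$ is preserved by $m_h^{(|F|)}$. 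Pairing against the vector $\Xi^i:=\bigoplus_{\gamma\in F}\lambda_\gamma b_\gamma^i\in V_x^{\oplus|F|}$, with $b_\gamma^i$ placed in the $\gamma$-coordinate of $V_x$, yields the inequality $\sum_{\gamma,\eta\in F}\overline{\lambda_\gamma}h(\gamma\eta^{-1})\lambda_\eta(b_\gamma^i)^*c_\gamma^i(c_\eta^i)^*b_\eta^i\geq 0$ in $B_x$, and since the products $(b_\gamma^i)^*c_\gamma^i(c_\eta^i)^*b_\eta^i$ converge strictly to $1_{M(B_x)}$, the scalar coefficient must be non-negative.

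The final claim follows because both classes of bundles are strongly saturated: for a continuous line bundle any normalised nonzero $\xi\in B_\gamma$ satisfies $\langle\xi,\xi\rangle=1\in B_{s(\gamma)}=\C$; and for a bundle $\B$ arising from a twisted groupoid $C^*$-dynamical system $(\A,\G,\Sigma,\alpha)$, the identification $B_{\dot\sigma}\cong A_{r(\sigma)}$ via a local section of $\Sigma$ combined with the inner-product formula $\langle[a,\sigma],[b,\sigma]\rangle_{B_{s(\sigma)}}=[\alpha_\sigma^{-1}(a^*b),s(\sigma)]$ allows one to pull back any approximate unit of $A_{r(\sigma)}$ via the isomorphism $\alpha_\sigma^{-1}$ into $\langle B_{\dot\sigma},B_{\dot\sigma}\rangle_{B_{s(\sigma)}}$. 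The only real obstacle is the coordination of the approximate units in the converse direction, but this is essentially administrative given strong saturation and the finiteness of $F$.
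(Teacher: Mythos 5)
Your proposal is correct and follows essentially the same route the paper intends: the paper states only that ``a very similar proof'' to that of Proposition \ref{prop:pdCrossed} applies, and you carry out exactly that adaptation, with the Kolmogorov--Aronszajn dilation through $\Hil_x\otimes V_x$ replacing the interior tensor product construction in the forward direction, and the strong saturation hypothesis supplying the approximate units $(b^i_\gamma)^*c^i_\gamma\to 1$ that play the role of the strict approximation of $1_{M(A_{r(\gamma)})}$ in the converse. The verification that line bundles and twisted-action bundles are strongly saturated is also as intended.
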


\begin{rem}
 If we consider a transformation groupoid $\G:= \Gamma \times X$ (where a discrete group $\Gamma$ acts on a compact space $X$), and consider the trivial line bundle over $\G$, then a function $h \in C(\G)$ can be canonically identified with a map	
$\tilde{h}:\Gamma \to C(X)$. The positive-definiteness condition for $h$ reads then as follows: 
for any $x \in X$ and a finite set $F\subseteq \Gamma$ we have that the matrix
\[\left[\tilde{h} (f^{-1} g) (f\cdot)\right]_{f,g \in F} \in M_{|F|}(C(X)) \]
is positive. This is the condition appearing for example in \cite[Equation (3.2)]{dong_ruan} (up to a change of convention) or in \cite[Definition 4.2]{BedosConti2};
see also a discussion in \cite[Section 2]{mstt}.
\end{rem}

\section{The Haagerup trick for 
twisted groupoid dynamical systems} \label{Sect:HTrick} 

Throughout the section we fix a   twisted groupoid  $C^*$-dynamical system $(\A,\G,\Sigma,\alpha)$ where 
$\G$ is an \'etale locally compact Hausdorff groupoid. We let \(\B\) be the Fell bundle associated to it and  consider the following $C^*$-inclusion
$$
 A:= C_0(\A) \subseteq B=C^*_r(\A,\G,\Sigma, \alpha) 
$$
equipped with the canonical faithful conditional expectation $E:B \to A$. 
 In Proposition \ref{prop:pdCrossed} we have seen that
strictly continuous positive-definite sections of $M(\A*_r\G)$ give rise to completely positive maps on $B$.
In this section we prove the converse. To this end,  we develop  a generalisation 
of `the  Haagerup Trick', cf.\ \cite[Lemma 2.5]{Haagerup}. To avoid technicalities, and also because
the approximation properties we want to study in sequel sections seem to be designed for the unital context, from now on we will assume here that
\begin{quote}
	the bundle $\A$ has a \emph{continuous  unital section}, i.e. 
each fiber $A_x$, $x\in \Gz$ is unital and the unit section $\Gz \ni x\mapsto 1_x\in A_x\subseteq \A$ is continuous.
\end{quote}
Here and below $1_x$ denotes the unit in the algebra $A_x$, $x \in \Gz$.
Our standing  assumption implies that $A$ is unital
if and only if $\Gz$ is compact. It also implies that $M(\A*_r\G)=\A*_r\G$ and for sections of this bundle strict continuity  is just continuity. Further  we may assume that $C_0(\G)\subseteq Z(A)$ by identifying functions in $C_0(\G)$ with their product by the unit section.

In the following proposition we use the notation introduced in Lemma \ref{lem:twisted_action_groupoid_vs_inverse_semigroup}, 
(see also Lemma \ref{lem:equivalent actions on A}), we denote by $\B$ the Fell bundle associated to  $(\A,\G,\Sigma,\alpha)$ in  Example  \ref{ex:bundle_from_twisted_action},
and use the identification $C_r^*(\B)\subseteq C_0(\B)$ from Remark \ref{rem:groupoid_identifications}.
\begin{prop}[Haagerup Trick]\label{prop:Haagerup Trick}
	Suppose $\Phi:B\to B$ is a bounded right $A$-module map. 
	Fix an element $a\in A$. 
	For each $\gamma\in \G$ choose a  bisection $U\in S$ containing $\gamma$ and a  section   $f\in C_c(\A|_{r(U)})$ 
	which attains units at a neighborhood of $r(\gamma)$, and define
	$$
	f_\gamma(\eta):=[f(r(\eta)), c_U(\eta)],\qquad \eta \in U.
	$$
	Then $f_\gamma\in C_c(\B|_{U})\subseteq B$ and we define
	\begin{equation}\label{eq:Haagerup_multiplier}
	h^\Phi(\gamma):=\Phi(f_\gamma)(\gamma) f_\gamma(\gamma)^*=E( \Phi(f_\gamma) f_\gamma^*)(r(\gamma))\in A_{r(\gamma)}.
	\end{equation}
	The formula above  does not depend on the choice of $U$, $f$ and $c_U$, and defines a bounded continuous section $h^\Phi$ of  
	$\A*_r\G$ (we have $\|h^\Phi\|_{\infty}\leq \|\Phi\|$). Moreover, if $\Phi$ is completely positive then $h^\Phi$ is positive-definite. 
	
	\end{prop}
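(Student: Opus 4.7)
The plan proceeds in four stages. First, $f_\gamma \in C_c(\B|_U) \subseteq B$ by continuity of $f$ and $c_U$; and the equality
\[
\Phi(f_\gamma)(\gamma)\, f_\gamma(\gamma)^* = E\bigl(\Phi(f_\gamma) f_\gamma^*\bigr)(r(\gamma))
\]
is immediate: since $U$ is a bisection, in the convolution
$(\Phi(f_\gamma) f_\gamma^*)(r(\gamma)) = \sum_{\eta : r(\eta) = r(\gamma)} \Phi(f_\gamma)(\eta)\, f_\gamma(\eta)^*$
the sum collapses to the single index $\eta = \gamma$, because $f_\gamma^*$ is supported on $U^{-1}$ and $r|_{U^{-1}}$ is injective.

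The second stage---the substantive input for everything that follows---is the independence lemma: for any bisection $U \ni \gamma$ and any $g \in C_c(\B|_U) \subseteq B$ with $g(\gamma) = 0$, one has $\Phi(g)(\gamma) = 0$. To prove it, use $c_U$ to write $g(\eta) = [k(r(\eta)), c_U(\eta)]$ with $k \in C_c(\A|_{r(U)})$, $k(r(\gamma)) = 0$; pick $\ell \in C_c(\A|_{r(U)})$ equal to the unit on $\supp(k)$; set $g_1(\eta) := [\ell(r(\eta)), c_U(\eta)]$ and define $\tilde b \in A$ supported in $s(U)$ by $\tilde b(s(\eta)) := \alpha_{c_U(\eta)}^{-1}(k(r(\eta)))$. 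A direct computation using the bundle multiplication rules gives $g_1\, \tilde b = g$ in $B$ and $\tilde b(s(\gamma)) = 0$, so right $A$-linearity of $\Phi$ yields $\Phi(g)(\gamma) = \Phi(g_1)(\gamma) \cdot \tilde b(s(\gamma)) = 0$. By linearity, $\Phi(g)(\gamma)$ depends on $g \in C_c(\B|_U)$ only through $g(\gamma)$. Independence from $U$ follows by passing to $W \subseteq U_1 \cap U_2$; changing $c_U$ to $c'_U = z \cdot c_U$ (with $z\colon U\to \T$ continuous) multiplies $f_\gamma(\gamma) = [1, c_U(\gamma)]$ on the right by the unitary $u := u_{s(\gamma)}(z(\gamma))^{-1} \in A_{s(\gamma)}$, and right $A$-linearity together with $u u^* = 1$ gives
\[
\Phi(f_\gamma \tilde u)(\gamma)\, (f_\gamma \tilde u)(\gamma)^* = \Phi(f_\gamma)(\gamma)\, u u^*\, f_\gamma(\gamma)^* = \Phi(f_\gamma)(\gamma)\, f_\gamma(\gamma)^*
\]
for any $\tilde u \in A$ with $\tilde u(s(\gamma)) = u$. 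Independence from $f$ is subsumed, since any two admissible choices produce sections agreeing at $\gamma$.

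For continuity, fix adapted $(U, c_U, f)$ at $\gamma_0$; the same $f_{\gamma_0}$ is a valid choice for every $\gamma$ in the neighbourhood $V := \{\eta \in U : f(r(\eta)) = 1_{r(\eta)}\}$, so $h^\Phi|_V = E(\Phi(f_{\gamma_0}) f_{\gamma_0}^*) \circ r$ is continuous. For the bound $\|h^\Phi\|_\infty \leq \|\Phi\|$, exploit the continuous unit section to choose $f$ with $0 \leq f \leq 1$ and $f = 1$ near $r(\gamma)$; then $\|f_\gamma\|_B = \|f\|_\infty \leq 1$ by Remark \ref{rem:groupoid_identifications}, giving $\|\Phi(f_\gamma)(\gamma)\|\, \|f_\gamma(\gamma)^*\| \leq \|\Phi\| \cdot 1 \cdot 1$.

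The main obstacle is positive-definiteness. Fix $x \in \Gz$, finite $F \subseteq \G_x$, and data $(U_\gamma, c_{U_\gamma}, f_\gamma)$ for each $\gamma \in F$; set $c(\gamma) := c_{U_\gamma}(\gamma)$. The matrix $[f_\gamma f_\eta^*]_{\gamma, \eta \in F} = \xi \xi^*$ with $\xi = (f_\gamma)_{\gamma\in F}$ a column is positive in $M_{|F|}(B)$; complete positivity of $\Phi$ and the $*$-homomorphism $\pi_x$ of \eqref{pix} propagate positivity to $[\pi_x \Phi(f_\gamma f_\eta^*)]$ in $M_{|F|}(B(V_x))$. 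The independence lemma, applied at $\gamma \eta^{-1} \in U_\gamma U_\eta^{-1}$ with the natural section $c(\gamma) c(\eta)^{-1}$, identifies
\[
\Phi(f_\gamma f_\eta^*)(\gamma \eta^{-1}) = \bigl[h^\Phi(\gamma \eta^{-1}),\; c(\gamma) c(\eta)^{-1}\bigr] \in B_{\gamma \eta^{-1}},
\]
because $(f_\gamma f_\eta^*)(\gamma \eta^{-1}) = [1, c(\gamma) c(\eta)^{-1}]$ and so the formula for $h^\Phi$ applied at $\gamma\eta^{-1}$ with test section $f_\gamma f_\eta^*$ reads off precisely the first component. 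Testing positivity against the vector $v \in V_x^{\oplus |F|}$ with $v_\gamma := [1, c(\gamma)]\, b_\gamma \in B_\gamma \subseteq V_x$ for arbitrary $b_\gamma \in A_x$, the formula \eqref{pix} combined with the bundle multiplications yields
\[
0 \leq \sum_{\gamma, \eta \in F} \bigl\langle v_\gamma,\; \pi_x \Phi(f_\gamma f_\eta^*)\, v_\eta \bigr\rangle_{A_x} = \sum_{\gamma, \eta \in F} b_\gamma^*\, \alpha_{c(\gamma)}^{-1}\bigl(h^\Phi(\gamma \eta^{-1})\bigr)\, b_\eta
\]
in $A_x$, and arbitrariness of the $b_\gamma$ establishes positivity of the matrix $[\alpha_{c(\gamma)}^{-1}(h^\Phi(\gamma \eta^{-1}))]$ in $M_{|F|}(A_x)$, i.e.\ positive-definiteness per Definition \ref{defn:positive_definite}. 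The delicate point throughout this last computation is the bookkeeping with the twist cocycle $u$ and the sections $c_U$ inside the bundle products, which is where the continuous-unit-section hypothesis and the normalisation $f_\gamma(\gamma) = [1, c(\gamma)]$ are indispensable.
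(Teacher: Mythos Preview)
Your proof is correct and follows the same overall architecture as the paper's: independence of choices via right $A$-linearity, continuity and boundedness by freezing a single test section on a neighbourhood, and positive-definiteness by using $f_\gamma f_\eta^*$ as a test section at $\gamma\eta^{-1}$. Two points of packaging differ. First, you isolate a clean ``independence lemma'' (if $g\in C_c(\B|_U)$ vanishes at $\gamma$ then $\Phi(g)(\gamma)=0$), proved by writing $g=g_1\tilde b$ with $\tilde b(s(\gamma))=0$; the paper instead handles the two choices of section $c_U$ and of $f$ by separate ad hoc computations, multiplying by suitable cut-off functions equal to $1$ at $s(\gamma)$. Your formulation is tidier and makes the later use at $\gamma\eta^{-1}$ transparent. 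Second, for positive-definiteness the paper works entirely inside $B$ and $A$: it shows
\[
\alpha_{c(\gamma)}^{-1}\bigl(h^\Phi(\gamma\eta^{-1})\bigr)=E\bigl(f_\gamma^*\,\Phi(f_\gamma f_\eta^*)\,f_\eta\bigr)(x),
\]
so the matrix in question is the image of $[f_\gamma f_\eta^*]\ge 0$ under the composition of the completely positive maps $\Phi^{(|F|)}$, conjugation by $\mathrm{diag}(f_\gamma)$, $E^{(|F|)}$ and evaluation at~$x$. You instead push everything through the regular representation $\pi_x$ and pair with vectors $v_\gamma=[1,c(\gamma)]b_\gamma$ in $V_x$. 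Both routes are valid; the paper's avoids introducing $\pi_x$ at this stage, while yours mirrors the computation in the proof of Proposition~\ref{prop:pdCrossed} and makes the role of the $b_\gamma$ explicit.
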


\begin{proof} The element $f_\gamma$ corresponds to the section $f$ under the isomorphism 
$C_0(\B|_{U})\cong C_0(\A*_r\G|_{U})\cong C_0(\A|_{r(U)})$ from  Lemma \ref{lem:equivalent actions on A}. In particular, $f_\gamma\in C_c(\B|_{U})\subseteq B$. To see that the definition of 
$h^\Phi(\gamma)$ does not depend on the choice of the section $c_U:U\to \Sigma$,
	let $c_U':U\to \Sigma$  be another continuous section that trivializes the $\T$-bundle, and put 
	$f_{\gamma}'(\eta):=[f(r(\eta)), c_U'(\eta)]$,
	$\eta \in \G$.  Then $c(r(\eta)):=c_U'(\eta)c_U(\eta)^{-1}$, $\eta\in U$, defines a continuous section of  $\Sigma|_{r(U)}\cong r(U)\times \T \onto r(U)$. Thus identifying $c(x)$ with the uniquely determined element of $\mathbb{T}$ we see that $s(U)\ni s(\eta) \mapsto u_{s(\eta)}( c(r(\eta)))\in A_x\subseteq \A$ is also a continuous section. Let $h\in C_0(s(U))$ be any function which is equal to one on $s(\supp(f_\gamma))$. Then $u(s(\eta)):=h(s(\eta))u_{s(\eta)}( c(r(\eta)))$, $\eta\in U$, defines an element of  $C_0(\A|_{s(U)})\subseteq A$.
	For any $\eta\in \supp(f_\gamma)\subseteq  U$ we have 
	\begin{align*}
	f_{\gamma}'(\eta)&=[f(r(\eta)), c_U'(\eta)]=[f(r(\eta)), c(r(\eta)) c_U(\eta)]=[f(r(\eta))u_{r(\eta)}(c(r(\eta))),  c_U(\eta)]
	\\
	&=[f(r(\eta))\alpha_{c_U(\eta)}\big(u_{s(\eta)}(c(r(\eta)))\big),  c_U(\eta)]
	=[f(r(\eta))\alpha_{c_U(\eta)}\big(u(s(\eta))\big),  c_U(\eta)]
	\\
	&=
	(f_{\gamma} u)(\eta).
	\end{align*}
	Hence $f_{\gamma}'= f_{\gamma} u$ where $u\in A$ is such that $uu^*$ is the unit section on $s(\supp(f_\gamma))$. As $\Phi$ is a right $A$-module map and $(uu^*)(x)=1_x$ for all 
	$x\in s(\supp(f_\gamma))$, we obtain
	$$
	\Phi(f_{\gamma}') (f_{\gamma}')^*=\Phi(f_{\gamma}) uu^* f_\gamma^* 
	=	\Phi(f_{\gamma}) f_{\gamma}^*.
	$$
	Independence of $h^\Phi(\gamma)$  from the choice of $U$ will follow once we show its independence from the choice of $f$.
	To show the latter, let $f'\in C_c(\A|_{r(U')})$, $U'\in S$, be another section  which  is unital at a neighborhood of $r(\gamma)$, and put 
	$(f_\gamma')(\eta):=[f'(r(\eta)), c_U(\eta)]$. Then $f_\gamma|_V=f_\gamma'|_V$ for some  neighborhood $V$ of $\gamma$ (find $V$ such that both $f$ and $f'$ are unital
	on $r(V)\subseteq r(U)\cap r(U')$). Let $h\in C_c(s(V))$ be such that $h(s(\gamma))=1$. 
	We may treat $h$ as an element of $A$ by multiplying it by the unit section of $\A$.  Then $f_\gamma h=f_\gamma' h$ and using that $\Phi$ is a right $A$-module map we get
	\begin{align*}
	\Phi(f_{\gamma})(\gamma) (f_{\gamma})^* (\gamma)&=\Phi(f_\gamma)(\gamma) h(s(\gamma)) \overline{h(s(\gamma))}(f_{\gamma}^* (\gamma))
	=\Phi(f_{\gamma}h)(\gamma) (f_{\gamma}h)^* (\gamma)
	\\
	&=\Phi(f_{\gamma}'h)(\gamma) (f_{\gamma}'h)^* (\gamma) =\Phi(f_\gamma')(\gamma) h(s(\gamma)) \overline{h(s(\gamma))}(f_{\gamma}'^* (\gamma))
		\\
	&=\Phi(f_{\gamma}')(\gamma) (f_{\gamma}')^* (\gamma).
		\end{align*}
		Hence the formula \eqref{eq:Haagerup_multiplier} gives a well defined section $h^\Phi$ of  
	$\A*_r\G$; the fact that we also have the expression involving the conditional expectation follows as $f_\gamma$ is supported on a bisection.  For any $\gamma \in \G$ 
 the section $f_\gamma$ is  given by $f$ with $f(r(\eta))=1_{r(\eta)}$ for all \(\eta\in V\) for   some open neighbourhood $V$ of $\gamma$.
Hence by the independence on the choice of a right $f$ established above we have $h^\Phi(\eta)= E( \Phi(f_\gamma) f_\gamma^*) (r(\eta))$ for any $\eta \in V$. 
	As $E( \Phi(f_\gamma) f_\gamma^*)\in A=C_0(\A)$ is continuous, this implies that $h^\Phi$ is continuous at $\gamma$.  
  Also we may choose $f$ so that $\|f\|=1$, equivalently $\|f_\gamma\|=1$, and this implies that 
	$\|h^\Phi(\gamma)\|=\|E( \Phi(f_\gamma) f_\gamma^*)(r(\gamma))\|\leq \|\Phi\|$. 
	Thus the section $h^\Phi$ is bounded by $\|\Phi\|$.

	Now suppose that $\Phi$ is completely positive. 
	To see that $h^\Phi$ is positive-definite fix $x\in \Gz$ and a finite set $F \subseteq \G_x$.
	For each $\gamma \in F$ choose $f_\gamma$ as in 
	\eqref{eq:Haagerup_multiplier} supported on $U_\gamma\in S$. Define a section $c:F\to \Sigma$ by $c(\gamma):=c_{U_\gamma}(\gamma)$, $\gamma \in F$. Note that for any $\eta, \gamma \in F$, the product $f_\gamma * f_\eta^*$ is  supported on an open bisection in $S$ and it is the unit section on some neighbourhood of 
	$\gamma\eta^{-1}$.
	Hence 
	\begin{align*}
	h^\Phi(\gamma\eta^{-1})&= \Phi(f_\gamma * f_\eta^*)(\gamma\eta^{-1})(f_\gamma* f_\eta^*)(\gamma\eta^{-1})^* =
	\Phi(f_\gamma * f_\eta^*)(\gamma\eta^{-1})f_\eta(\eta) f_\gamma(\gamma)^*.
	\end{align*}
	A simple  computation using the formulas from Example \ref{ex:bundle_from_twisted_action} and the fact that $f_\gamma(\gamma)=[1_{A_{r(\gamma)}}, c_U(\gamma)]$ shows that 	$f_\gamma(\gamma)^*  h^\Phi(\gamma\eta^{-1}) f_\gamma(\gamma)= \alpha_{c(\gamma)}^{-1}\big(h^\Phi(\gamma\eta^{-1})\big)$
	and $f_\gamma(\gamma)^*f_\gamma(\gamma)=1_{s(\gamma)}$. Thus we get
	$$
	\alpha_{c(\gamma)}^{-1}\big(h^\Phi(\gamma\eta^{-1})\big)= f_\gamma(\gamma)^* \Phi(f_\gamma * f_\eta^*)(\gamma\eta^{-1})f_\eta(\eta)=
	E\left(f_\gamma^* *\Phi(f_\gamma * f_\eta^*) * f_\eta\right)(x),
	$$
	where in the last equality we use again the fact that both $f_\gamma$ and $f_\eta$ are supported on bisections.
	Thus the matrix
	$$
	\left[\alpha_{c(\gamma)}^{-1}\big(h^\Phi(\gamma\eta^{-1})\big)\right]_{\eta,\gamma \in F}
	=\left[E\left(f_\gamma^* *\Phi(f_\gamma * f_\eta^*) * f_\eta\right)(x)\right]_{\eta,\gamma \in F}
	$$
	is positive  as $\Phi$  and $E$ are  completely positive.  
\end{proof}
\begin{rem} In the case of a groupoid dynamical system $(\A,\G,\alpha)$  (without a twist) we may replace $\B$ by $\A*_r\G$ and then the map \eqref{eq:Haagerup_multiplier} 
	induced by $\Phi:B\to B$
	could be defined by 
	$h^\Phi(\gamma)=\Phi(f)(\gamma)
	$  where $f\in C_c(\A*_r\G|_{U})$
	is  supported on an open bisection \(U\)  and such that  $f(\eta)=1_{r(\eta)}$ on some neighbourhood of $\gamma$. We also invite the reader to check that when the crossed product in question is just the usual group $C^*$-algebra (i.e.\ $G$ is a discrete group and the bundle $\A$ is trivial) the formula \eqref{eq:Haagerup_multiplier} reduces to the one from the classical Haagerup trick.
\end{rem}
\begin{rem} Given any bounded mapping $\B$-multiplier $h\in C_b(\A*_r\G)$ the map  $m_h:B\to B$ is a bounded right $A$-module map and  
$h^{m_h}=h$.
 But  for a general bounded right $A$-module $\Phi:B\to B$  it may happen that $m_{h^{\Phi}}\neq \Phi$.
\end{rem}

We now wish to characterise  when a section obtained by the Haagerup Trick is of $C_0$ or $C_c$ class.
To this end, we use the right Hilbert $A$-module $B_E$ associated to the canonical expectation $E:B\to A$. 
Thus
\begin{equation}\label{eq:def_B_E}
B_E:=\overline{B}^{\|\cdot\|_{E}}, \quad \langle x, y\rangle_{E}:=E(x^*y), \qquad x,y \in B\subseteq B_E.
\end{equation}
As is common, we will write $\mathcal{L}(B_E)$ for the algebra of all adjointable operators on $B_E$, for $\xi, \eta \in B_E$ the symbol $\Theta_{\xi, \eta} \in \mathcal{L}(B_E)$ will denote the obvious `rank-one operator' and $\K(B_E)$ will denote the closed linear span of rank-one operators inside $\mathcal{L}(B_E)$ (see for example \cite{Lance}).
Denoting by $\|\cdot\|_{E}$ the norm in $B_E$, by $\|\cdot\|_{\infty}$ the supremum norm on $C_0(\bu)$ and 
by $\|\cdot \|_r$ the norm in $B=C^*_r(\B)$ we have $\|f\|_{\infty}\leq  \|f\|_{E}\leq \|f\|_{r}$ for all
 $f\in C_c(\bu)$. In particular, we may assume the identifications
$$
C_c(\bu)\subseteq B\subseteq B_E=\{f\in C_0(\bu): \sup_{x\in \Gz} \|\sum_{\gamma\in \G_x} f^{*}(\gamma) f(\gamma) \|_{A_x}<\infty\}\subseteq C_0(\bu),
$$
cf.\ Remark \ref{rem:groupoid_identifications}.

In general, for an element  $h\in C_b(\A*_r \G)$ the formula \eqref{eq:multiplier_definition}, 
does not define a bounded map $m_h$ on $B=C^*_r(\bu)$, 
and  a bounded
right $A$-module map $\Phi:B\to B$ does not need to extend to a map  $\tilde{\Phi}\in \mathcal{L}(B_E)$. This can be seen already if one considers $\A$ to be trivial and $\G$ to be a group, so that we are in the context of Herz-Schur multipliers.
However, we have the following result.

\begin{prop}\label{prop:Hilbert multipliers_fell_groupoids}
	The space  $C_b(\A*_r\G)$ of continuous bounded sections  equipped with pointwise operations
	and supremum norm is a $C^*$-algebra.
	We have an injective $*$-homomorphism $C_b(\A*_r\G)\ni h\to \widetilde{m}_{h}\in \mathcal{L}(B_E)$ 
	where
	\begin{equation}\label{eq:multiplier_on_Hilbert_module}
	(\widetilde{m}_{h} (a))(\gamma):= h(\gamma)\cdot a(\gamma), \qquad a\in  C_c(\bu)\subseteq B_E, \,\, \gamma \in \G.
	\end{equation}
	Moreover, if $h\in C_b(\A*_r\G)$ then 
	\begin{enumerate}
		\item\label{enu:multipliers_fell_groupoids1}  \(h\in C_c(\A*_r\G)\) if and only if 
		$\widetilde{m}_h\in \mathcal{F}(C_c(\bu)):=\text{span}\{\Theta_{f,g}: f,g\in C_c(\bu)\}$.
	
		\item\label{enu:multipliers_fell_groupoids2} \(h\in C_0(\A*_r\G)\) if and only if \(\widetilde{m}_h\in \mathcal{K}(B_E)\).
			\end{enumerate}
Similarly, if $\Phi:B\to B$ 	 is a bounded right $A$-module map that extends to a bounded map $\tilde{\Phi}:B_E\to B_E$, then we have
\begin{enumerate}
	\setcounter{enumi}{2}
	\item\label{enu:multipliers_fell_groupoids3}  \(h^\Phi\in C_c(\A*_r\G)\) if  $\tilde{\Phi}\in \mathcal{F}(C_c(\bu))$.
	\item\label{enu:multipliers_fell_groupoids4}
 \(h^\Phi\in C_0(\A*_r\G)\) if  $\tilde{\Phi}\in \mathcal{K}(B_E)$.
\end{enumerate}		
\end{prop}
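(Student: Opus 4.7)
The proof has four main stages, with the Haagerup sections $f_\gamma$ constructed in Proposition \ref{prop:Haagerup Trick}, which satisfy $f_\gamma(\gamma) f_\gamma(\gamma)^* = 1_{r(\gamma)}$ by a direct computation in the twisted Fell bundle, playing the central role throughout.

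First I would set up the multiplier $*$-homomorphism. That $C_b(\A*_r\G)$ forms a $C^*$-algebra with pointwise operations and supremum norm is standard for upper semicontinuous $C^*$-bundles with continuous operations, using the fibrewise $C^*$-identity. Boundedness of $\widetilde{m}_h$ on $B_E$ follows from the Hilbert module inequality $E((\widetilde{m}_h a)^* \widetilde{m}_h a) \leq \|h\|_\infty^2 E(a^*a)$; adjointability with $\widetilde{m}_h^* = \widetilde{m}_{h^*}$ is immediate from $\langle \widetilde{m}_h a, b \rangle_E = \langle a, \widetilde{m}_{h^*} b\rangle_E$; and the $*$-homomorphism property is fibrewise. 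Injectivity follows from $\widetilde{m}_h(f_\gamma)(\gamma) f_\gamma(\gamma)^* = h(\gamma)$, so $\widetilde{m}_h = 0$ forces $h \equiv 0$; being an injective $*$-homomorphism between $C^*$-algebras, $m$ is then automatically isometric.

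For the forward direction of (1), I would use a partition of unity on $\supp(h)$ subordinate to a finite cover by trivialising bisections $U_1,\dots, U_n \in S$, decomposing $h = \sum_i h_i$ with each $h_i \in C_c(\A*_r\G|_{U_i})$, and showing $\widetilde{m}_{h_i} = \Theta_{\xi_i, \eta_i}$ where $\xi_i(\gamma) := [h_i(\gamma), c_{U_i}(\gamma)]$ and $\eta_i(\gamma) := [\chi_i(r(\gamma)), c_{U_i}(\gamma)]$ with $\chi_i \in C_c(r(U_i))$ equal to $1$ on $r(\supp(h_i))$. Since $\eta_i$ is supported on the bisection $U_i$, only the term $\beta = \gamma$ survives in the sum defining $\Theta_{\xi_i, \eta_i}(a)(\gamma)$, and the identity $\xi_i(\gamma)\eta_i(\gamma)^* = [h_i(\gamma), r(\gamma)] = h_i(\gamma) \in A_{r(\gamma)}$ completes the verification. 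For the reverse direction, if $\widetilde{m}_h = \sum_i \Theta_{\xi_i, \eta_i}$ with $\xi_i \in C_c(\bu)$ and $\gamma_0 \notin K := \bigcup_i \supp(\xi_i)$, then $(\widetilde{m}_h f_{\gamma_0})(\gamma_0) = 0$ forces $h(\gamma_0) = 0$, so $\supp(h) \subseteq K$. Part (2) then follows by density: $\K(B_E) = \overline{\mathcal{F}(C_c(\bu))}$ because $C_c(\bu)$ is dense in $B_E$ in the $E$-norm, so the isometry $m$ yields that $h \in C_0$ implies $\widetilde{m}_h \in \K(B_E)$. The reverse implication in (2) --- which I expect to be the main technical obstacle --- is proved by contradiction: if $h \notin C_0$ there exist $\epsilon > 0$ and a net $(\gamma_n)$ eventually leaving every compact set with $\|h(\gamma_n)\| \geq \epsilon$; approximating $\widetilde{m}_h$ within $\epsilon/2$ by some $T' \in \mathcal{F}(C_c(\bu))$, which vanishes outside a compact set $K'$, one picks $\gamma_n \notin K'$ and derives $\|(\widetilde{m}_h - T')f_{\gamma_n}\|_\infty \geq \|h(\gamma_n)\| \geq \epsilon$, contradicting $\|(\widetilde{m}_h - T')f_{\gamma_n}\|_\infty \leq \|(\widetilde{m}_h - T')f_{\gamma_n}\|_E \leq (\epsilon/2)\|f_{\gamma_n}\|_E = \epsilon/2$.

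For (3) and (4), the key computation is that when $\tilde{\Phi} = \Theta_{\xi, \eta}$ with $\xi, \eta \in C_c(\bu)$, using that $f_\gamma$ is supported on a bisection, the Haagerup formula reduces to $\tilde{\Phi}(f_\gamma)(\gamma) = \xi(\gamma)\eta(\gamma)^* f_\gamma(\gamma)$, and multiplying by $f_\gamma(\gamma)^*$ gives $h^\Phi(\gamma) = \xi(\gamma)\eta(\gamma)^* \in A_{r(\gamma)}$, a continuous section with support in $\supp(\xi) \cap \supp(\eta)$. Linearity extends this to all of $\mathcal{F}(C_c(\bu))$, establishing (3). For (4) I would first prove the contractivity estimate $\|h^\Phi\|_\infty \leq \|\tilde{\Phi}\|_{\L(B_E)}$ via $\|h^\Phi(\gamma)\| \leq \|\tilde{\Phi}(f_\gamma)\|_\infty \leq \|\tilde{\Phi}(f_\gamma)\|_E \leq \|\tilde{\Phi}\|$ combined with $\|f_\gamma\|_E = 1$, then approximate $\tilde{\Phi} \in \K(B_E)$ by $T_n \in \mathcal{F}(C_c(\bu))$ in operator norm. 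Each $T_n$ restricts to a bounded right $A$-module map on $B$ (using $\|\Theta_{\xi, \eta}(f)\|_r \leq \|\xi\|_r \|\eta\|_r \|f\|_r$ for $\xi, \eta \in C_c(\bu)$), so (3) gives $h^{T_n} \in C_c$, and contractivity yields uniform convergence $h^{T_n} \to h^\Phi$, whence $h^\Phi \in C_0$.
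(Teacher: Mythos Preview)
Your proof is correct and follows essentially the same route as the paper's: the rank-one formula $\Theta_{\xi,\eta}(a)(\gamma)=\xi(\gamma)\eta(\gamma)^*a(\gamma)$ (for $\eta$ supported on a bisection) combined with the Haagerup sections $f_\gamma$ drives both the forward and reverse directions of (1)--(4). The only notable tactical differences are that for the reverse of (2) you use the compact set $K'=\bigcup_i\supp(\xi_i)$ and pointwise vanishing of $T'f_{\gamma_n}$ at $\gamma_n$, whereas the paper arranges $R(f_\gamma)\equiv0$ via a different compact set, and for (4) you argue via the contractivity estimate $\|h^\Psi\|_\infty\le\|\tilde\Psi\|_{\L(B_E)}$ and density while the paper treats (2) and (4) in parallel --- both variants are valid and of comparable length.
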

\begin{proof}
	The first claim is immediate. 
	Let $h\in C_b(\A*_r\G)$. Clearly,  \eqref{eq:multiplier_on_Hilbert_module} defines a map 
	$\widetilde{m}_{h}:  C_c(\bu)\to C_c(\bu)$. Let  $a,b \in C_c(\bu)$ and $x\in \Gz$. Since  
	$ E(a^* b)(x)=(a^* * b)(x) = \sum_{\gamma\in \G_x} a(\gamma)^*b(\gamma)
	$, we get
	\begin{equation}\label{multadjoint}
	E(\widetilde{m}_{h}(a)^*b)(x)= \sum_{\gamma\in \G_x} a(\gamma)^* h(\gamma)^{*}b(\gamma)
	=E(a^* \widetilde{m}_{h^*}(b))(x).
	\end{equation}
	That is, $\langle \widetilde{m}_{h}(a), b\rangle_{E}=\langle a, \widetilde{m}_{h^*}(b)\rangle_{E}$. 
	Similarly,
	\begin{align*}
	E(\widetilde{m}_{h}(a)^* \widetilde{m}_{h}(a))(x)&=  \sum_{\gamma\in \G_x}  a(\gamma)^*|h(\gamma)|^2 a(\gamma)
	\leq \|h\|^2 \sum_{\gamma\in \G_x}  a(\gamma)^*a(\gamma)=  \|h\|^2    E(a^* a)(x).
	\end{align*}
	This implies that $\langle \widetilde{m}_{h}(a), \widetilde{m}_{h}(a)\rangle_{E}\leq \|h\| \langle a,a\rangle_{E}$ in $A$, and hence 
	$
	\|\widetilde{m}_{h}(a)\|^2_{E}\leq \|h\|^2 \|a\|^2_{E}
	$.  So $\widetilde{m}_{h}$ is bounded, and $\|\widetilde{m}_{h}\|\leq \|h\|$.
	Accordingly,   $\widetilde{m}_{h}:C_c(\bu)\to C_c(\bu)$ extends to a bounded map on 
	$B_E$. By \eqref{multadjoint} this map is adjointable, with the  adjoint equal to $\widetilde{m}_{h^*}$.
	Thus the map $C_b(\A*_r\G)\ni h\to \widetilde{m}_{h}\in \mathcal{L}(B_E)$  is well defined and $*$-preserving.
	By  \eqref{eq:multiplier_on_Hilbert_module} it is immediate that this map is also linear, multiplicative and injective.
	
	\eqref{enu:multipliers_fell_groupoids1} and  	\eqref{enu:multipliers_fell_groupoids3}:
	To  show 
	that $h\in C_c(\A*_r\G)$ implies  $\widetilde{m}_h \in \mathcal{F}(C_c(\bu))$  it suffices to consider the case when  $h$ is supported on an open bisection
	\(U\in S\) where $S$ is as in Lemma \ref{lem:twisted_action_groupoid_vs_inverse_semigroup}, cf.\ also Lemma \ref{lem:equivalent actions on A}.  
		Using that $C_0(\A|_{r(U)})$ is a $C^*$-algebra and $r$ induces isomorphisms  $\bu|_{U}\cong \A|_{r(U)}$ and  $C_c(\bu|_{U})\cong C_c(\A|_{r(U)})$, we can find elements $f,g\in C_c(\A*_r\G|_{U})$
	such that $h(\gamma)=f(\gamma)g(\gamma)^*\in A_{r(\gamma)}$ for every $\gamma \in \G$. 
	Then for any  $a\in C_c(\A*_{r} \G|_{V})$, $V\in S$,  and $\gamma \in U\cap V$ we get
	\begin{align*}
	h(\gamma)[a](\gamma)&=  [f(\gamma)g(\gamma)^*a(\gamma), c_U(\gamma)]=[f(\gamma),c_U(\gamma)][\alpha_{c_U(\gamma)}^{-1}
	\big(g(\gamma)^*a(\gamma)\big), s(\gamma)]
	\\	&
	= [f](\gamma)\langle [g], [a]\rangle_A (s(\gamma))=( \Theta_{[f],[g]} [a])(\gamma).
	\end{align*}
	Hence $\widetilde{m}_{h}= \Theta_{[f],[g]}\in \mathcal{F}(C_c(\G))$. 
	
	Suppose now that $h\in C_b(\A*_r\G)$ is any section such that $\widetilde{m}_{h}\in \mathcal{F}(C_c(\bu))$  
or that $h=h^\Phi$ where $\tilde{\Phi}\in \mathcal{F}(C_c(\bu))$. Then either  $\widetilde{m}_{h}$ or $\tilde{\Phi}$
	is of the form $T=\sum_{i=1}^n \Theta_{f_i,g_i}$ for  $f_i\in C_c(\bu|_{U_i})$, $g_i\in C_c(\bu|_{V_i})$, 
	where $U_i, V_i\subseteq \G$ are  open bisections,  $i=1,...,n$.  For each $\gamma \in \G$ choose  $f_\gamma$ is as in Proposition \ref{prop:Haagerup Trick}. Then 	
	\begin{align*}
	(Tf_{\gamma})(\gamma)&=\left(\sum_{i=1}^n f_i \cdot E (g_i^*\cdot  f_{\gamma})\right)(\gamma)=
	\sum_{i=1}^n f_i(\gamma)g_i(\gamma)^*f_{\gamma}(\gamma)=\left(\sum_{i=1}^n f_i(\gamma)g_i(\gamma)^*\right) f_{\gamma}(\gamma).
	\end{align*}
		If $T=\widetilde{m}_{h}$ we get $h(\gamma) f_{\gamma}(\gamma)=\left(\sum_{i=1}^n f_i(\gamma)g_i(\gamma)^*\right)  f_{\gamma}(\gamma)$.
		And if $T=\tilde{\Phi}$ we get $h(\gamma)=h^\Phi(\gamma)= \left(\sum_{i=1}^n f_i(\gamma)g_i(\gamma)^*\right)$.
		In both cases, we conclude that $h(\gamma)\neq 0$ if and only if $\sum_{i=1}^n f_i(\gamma)g_i(\gamma)^* \neq 0$, so the support of $h$ is compact, that is $h\in C_c(\A*_r\G)$.

	\eqref{enu:multipliers_fell_groupoids2} and  	\eqref{enu:multipliers_fell_groupoids4}:
	By what we have proved  $C_c(\A*_r\G)\ni h\to \widetilde{m}_{h}\in \mathcal{F}(C_c(\B))\subseteq \mathcal{K}(B_E)$ 
	is a contractive linear  map. Hence it extends uniquely to a contractive map 
	$C_0(\A*_r\G)\ni h\to \widetilde{m}_{h}\in \mathcal{K}(B_E)$, and it is easy to check that this extension is compatible with the formula defining the map in Proposition \ref{prop:Hilbert multipliers_fell_groupoids}.
	Thus if  $h\in C_0(\A*_r\G)$, then  $\widetilde{m}_{h}\in \mathcal{K}(B_E)$.
	
	Conversely, assume that $h\in C_b(\A*_r\G)$ is such that $\widetilde{m}_{h}\in \mathcal{K}(B_E)$ 
	or that  $h=h^\Phi$ where $\tilde{\Phi}\in \mathcal{K}(B_E)$.  Let $T\in \mathcal{K}(B_E)$ stand either for 
	$\widetilde{m}_{h}$ or for $\tilde{\Phi}$ and let $\varepsilon >0$. Since 
	$\overline{\mathcal{F}(C_c(\bu))}=\mathcal{K}(B_E)$, there is 
	$R=\sum_{i=1}^n \Theta_{f_i,g_i}$ with $f_i\in C_c(\bu|_{U_i})$, $g_i\in C_c(\bu|_{V_i})$, where $U_i, V_i\in S$,  $i=1,...,n$,
	and $\|T-R\|_{\L(B_E)}<\varepsilon/2$.
	It suffices to show  that the closed set $\{\gamma\in \G: \|h(\gamma)\|\geq \varepsilon\}$ is contained in the following compact subset of $\G$: 
	$$
	K:=\bigcup_{i=1}^n (s|_{V_i})^{-1} ( s(\supp f_i)).
	$$
	To see this, note that for any $\gamma\notin K$ we may find $U$ and  $f_\gamma\in C_c(\bu|_U)$  as in Proposition \ref{prop:Haagerup Trick} 
	such that  $U\subseteq \G\setminus K$ and $\|f_\gamma\|_{\infty}=1$.  Then $\|f_\gamma\|_E^2=\|E(f_\gamma^*f_\gamma)\|_{\infty}=\|f_\gamma^*f_\gamma\|_{\infty}=1$. Moreover, $R(f_\gamma)=0$, as for any $\eta\in \G$, 
	$$
	(R (f_\gamma))(\eta)
	=\sum_{i=1}^n f_i(\eta)g_i(s|_{V_i}^{-1}(s(\eta)))^* f_\gamma(s|_{V_i}^{-1}(s(\eta)))=0.
	$$ 
	If $T=\tilde{\Phi}$ then
	$ 
	\|h^\Phi(\gamma)\|=\|\Phi(f_\gamma)(\gamma) f_\gamma(\gamma)^*\|\leq \|\Phi(f_\gamma)(\gamma)\| 
	\leq \|\tilde{\Phi}(f_{\gamma})\|_E=\|\tilde{\Phi}(f_{\gamma}) -R(f_\gamma)\|_{E}<\varepsilon.
	$ 
	If $T=\widetilde{m}_{h}$, then
	$ 
	\|h(\gamma)\|=\|h(\gamma)f_{\gamma}(\gamma)\| \leq \|\widetilde{m}_{h}(f_{\gamma})\|_{E}
	=\|\widetilde{m}_{h}(f_{\gamma}) -R(f_{\gamma})\|_{E}<\varepsilon.
	$ 
	Thus in both cases we get  that $\{\gamma\in \G: \|h^\Phi(\gamma)\|\geq \varepsilon\}\subseteq K$.
	Hence $h\in C_0(\B)$.	
	\end{proof}

The uniform convergence on compact subsets of $\G$ can be rephrased in terms of 
pointwise convergence in the $\|\cdot\|_E$ norm.

\begin{lem}\label{lem:convergence_multipliers}
A net  $(h_i)_{i \in \Ind}\subseteq	  C_b(\A*_r\G) $ converges to the unit section  uniformly on compact sets,
i.e. $\sup_{\gamma\in K}\|h_i(\gamma)-1_{r(\gamma)}\|\to 0$ for every compact $K\subseteq \G$, if 
and only if  $(\widetilde{m}_{h_i})_{i \in \Ind}$ converges  pointwise on $C_c(\bu)\subseteq B_E$ to the identity operator
(so that if $(\widetilde{m}_{h_i})_{i \in \Ind}$ is bounded it converges  pointwise  to the identity operator on $B_E$).
Also	if $(\Phi_i)_{i\in I}$ is a net of  bounded  $A$-bimodule maps on  $C_r^*(\bu)$  which converges to identity pointwise on $C_c(\bu)\subseteq C_r^*(\bu)$ in the  $\|\cdot\|_E$ norm, then $(h^{\Phi_i})_{i \in \Ind}$ converges to the unit section uniformly on compact subsets.
\end{lem}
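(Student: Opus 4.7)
My plan is to prove the two assertions separately, using essentially the same partition-of-unity argument across open bisections for the non-routine direction of the equivalence and for the second statement.

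For the forward direction of the equivalence, fix $f\in C_c(\bu)$ with compact support $K$. A direct computation, using $E(g^{*}g)(x)=\sum_{\gamma\in\G_x}g(\gamma)^{*}g(\gamma)$ together with the Hilbert-bimodule inequality $\xi^{*}c\xi\le\|c\|\,\xi^{*}\xi$ in $A_{s(\gamma)}$ for $c\ge 0$ in $A_{r(\gamma)}$ and $\xi\in B_\gamma$, yields
\[
\|\widetilde{m}_{h_i}(f)-f\|_{E}^{2}\le\Bigl(\sup_{\gamma\in K}\|h_i(\gamma)-1_{r(\gamma)}\|\Bigr)^{2}\,\|f\|_{E}^{2}\longrightarrow 0.
\]
When the net $(\widetilde{m}_{h_i})$ is norm-bounded in $\mathcal{L}(B_E)$, the standard three-$\varepsilon$ argument combined with the density of $C_c(\bu)$ in $B_E$ upgrades pointwise convergence on $C_c(\bu)$ to pointwise convergence on all of $B_E$.

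For the converse, given a compact $K\subseteq\G$, I will cover $K$ by finitely many open bisections $U_1,\dots,U_n\in S$ (Lemma \ref{lem:twisted_action_groupoid_vs_inverse_semigroup}), shrink to open $V_j\subseteq U_j$ still covering $K$, and choose $\psi_j\in C_c(U_j)$ with $0\le\psi_j\le 1$ and $\psi_j\equiv 1$ on $V_j$. Setting
\[
f_j(\eta):=[\psi_j(\eta)\cdot 1_{r(\eta)},\,c_{U_j}(\eta)]\in B_\eta,\qquad \eta\in U_j,
\]
gives sections $f_j\in C_c(\bu|_{U_j})$. The key observation is that for a section $g$ supported in a single bisection one has $\|g\|_E=\|g\|_{\infty}$ (compute $E(g^{*}g)$ directly); hence
\[
\|\widetilde{m}_{h_i}(f_j)-f_j\|_{E}=\sup_{\eta\in U_j}\psi_j(\eta)\,\|h_i(\eta)-1_{r(\eta)}\|.
\]
For any $\gamma\in K$ there is an index $j$ with $\gamma\in V_j$, where $\psi_j(\gamma)=1$, whence $\|h_i(\gamma)-1_{r(\gamma)}\|\le\max_k\|\widetilde{m}_{h_i}(f_k)-f_k\|_E\to 0$. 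Taking the supremum over $K$ yields the required uniform convergence. The second statement is treated with exactly the same cover and test sections $f_j$: since $\psi_j\equiv 1$ on $V_j$, each $f_j$ attains units at a neighbourhood of $r(\gamma)$ for every $\gamma\in V_j$, so by the independence clause in Proposition \ref{prop:Haagerup Trick} we have $h^{\Phi_i}(\gamma)=\Phi_i(f_j)(\gamma)f_j(\gamma)^{*}$ and $f_j(\gamma)f_j(\gamma)^{*}=1_{r(\gamma)}$, whence
\[
\|h^{\Phi_i}(\gamma)-1_{r(\gamma)}\|\le\|\Phi_i(f_j)-f_j\|_{\infty}\le\|\Phi_i(f_j)-f_j\|_{E}\quad(\gamma\in V_j),
\]
and maximising over $j$ gives $\sup_K\|h^{\Phi_i}-1\|\le\max_j\|\Phi_i(f_j)-f_j\|_E\to 0$.

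The main obstacle in both parts is manufacturing a \emph{finite, $\gamma$-independent} family of test sections whose $\|\cdot\|_E$-sized error controls $\|h_i(\gamma)-1_{r(\gamma)}\|$ uniformly for $\gamma$ in a compact set. The combination of the étale bisection cover, a partition of unity $\{\psi_j\}$, and the equality $\|g\|_E=\|g\|_{\infty}$ for sections supported in a bisection is what bridges the gap between the coarse $\|\cdot\|_E$-norm on $B_E$ and pointwise control on $K$.
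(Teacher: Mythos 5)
Your proposal is correct and follows essentially the same route as the paper: the easy direction via the estimate $E((\widetilde{m}_{h_i}(f)-f)^*(\widetilde{m}_{h_i}(f)-f))(x)=\sum_{\gamma\in\G_x}f(\gamma)^*(h_i(\gamma)-1)^*(h_i(\gamma)-1)f(\gamma)$, and the converse (and the statement about $h^{\Phi_i}$) via a finite cover of $K$ by bisections, test sections equal to the unit near each point, and the inequality $\|\cdot\|_\infty\le\|\cdot\|_E$. The only cosmetic differences are that you treat general $f\in C_c(\bu)$ directly instead of reducing to a single bisection, and you record the equality $\|g\|_E=\|g\|_\infty$ for bisection-supported $g$ where the paper only needs the inequality; neither changes the substance.
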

\begin{proof}
	
	Suppose that  the net $(h_i)_{i \in \Ind} $ as above converges to the unit section uniformly on compact sets.  To show 
	that 
	$\lim_{i \in \Ind}\|\widetilde{m}_{h_i}(b) - b\|_{E} = 0$ for  $b\in C_c(\bu)$  we may assume that $b\in C_c(\bu|_U)$ where $U\in S$ is an open bisection. Thus there is   $a\in C_c(\A*_r\G|_{U})$   such that $b(\gamma)=[a(\gamma), c_U(\gamma)]$ for all $\gamma \in \G$. Since
	$K:=\overline{\{\gamma: b(\gamma)\neq 0\}}$ is compact we then get
	\begin{align*}
	\|\widetilde{m}_{h_i}(b) -b \|_{E}^2&=\sup_{x\in \Gz}\| \left((\widetilde{m}_{h_i}(b) -b)^*(\widetilde{m}_{h_i}(b) -b)\right)(x)\|
		\\
	&=\sup_{\gamma \in K} \| a(\gamma)^*(h_i(\gamma) -1_{r(\gamma)})^*(h_i(\gamma) -1_{r(\gamma)})a(\gamma)\|
	\\
	&\leq \sup_{\gamma \in K} \| a(\gamma)^*a(\gamma)\| \|h_i(\gamma) -1_{r(\gamma)}\|^2
	\\
	&\leq \|a\|^{2}_\infty\cdot  \sup_{\gamma\in K}\|h_i(\gamma) -1_{r(\gamma)}\|^2\to 0.
	\end{align*}

Now suppose conversely that   $(\widetilde{m}_{h_i})_{i \in \Ind}$ converges  pointwise on $C_c(\bu)\subseteq B_E$ to the identity operator, 
or that   $(h_i)_{i \in \Ind}=(h^{\Phi_i})_{i \in \Ind}$ where $(\Phi_i)_{i\in I}$  converges to identity pointwise on $C_c(\bu)\subseteq C_r^*(\bu)$ in the  $\|\cdot\|_E$ norm. Let $K\subseteq \G$ be a compact set. 
	We need to show that $
	\sup_{\gamma\in K}\|h(\gamma)-1_{r(\gamma)}\|\to 0$. There is a finite  open cover $\{V_j\}_{j=1}^n$ of $K$
 such that $\overline{V_j}$ is compact
	and contained in an open bisection $U_j\in S$, for each $j=1,...,n$.
	Accordingly, there are sections $f_j\in C_c(\B|_{U_j})$ such that
	$f_j(\gamma)=[1_{r(\gamma)}, c_{U_j}(\gamma)]$ for $\gamma\in \overline{V_j}$ and $j=1,...,n$.
	If $(h_i)_{i \in \Ind}=(h^{\Phi_i})_{i \in \Ind}$, then
	\begin{align*}
	\sup_{\gamma\in \overline{V_j}}\|h^{\Phi_i}(\gamma)-1_{r(\gamma)}\|
	&=\sup_{\gamma\in \overline{V_j}}\|\Phi_i(f_j)(\gamma) f_j(\gamma)^* -f_j(\gamma) f_j(\gamma)^*\|
	\\
	&\leq \sup_{\gamma\in \overline{V_j}}\| \Phi_i(f_j)(\gamma)- f_j(\gamma) \|\leq \|\Phi_i(f_j) -f_j\|_{E} \to 0.
	\end{align*}
	In the remaining case we have 
	$
	\sup_{\gamma\in \overline{V_j}}\|h_i(\gamma)-1_{r(\gamma)}\|
	=\sup_{\gamma\in \overline{V_j}}\|\widetilde{m}_{h_i}(f_j)(\gamma) -f_j(\gamma)\|
	\leq \|\widetilde{m}_{h_i}(f_j) -f_j\|_{E}\to 0.
	$
		Thus in both cases $
	\sup_{\gamma\in K}\|h(\gamma)-1_{r(\gamma)}\|\leq \max_{j=1,...,n} \sup_{\gamma\in \overline{V_j}}\|h(\gamma)-1_{r(\gamma)}\|\to 0$.
\end{proof}

\section{The Haagerup property for twisted groupoid dynamical systems}\label{Haagerup property}

In this section we analyse the relations between the geometric Haagerup property for group\-oids, based on conditions introduced in \cite{Tu}, and its $C^*$-algebraic counterpart from \cite{dong_ruan}, working in the context of general twisted groupoid $C^*$-dynamical systems. 
For second countable groups the Haagerup property can be formulated using various equivalent conditions, see \cite[Definition 1.1.1]{book}.
These conditions can be naturally generalised to \'etale topological groupoids. However, as we explain below, the relationship between some of them  is subtle when the space of units is not compact.

\begin{defn}[\cite{Tu}]\label{def:Tu}
 A	\emph{locally proper negative type function }on an  \'etale groupoid $\G$ is a continuous function $\psi: \G \to \R$ which is
	\begin{rlist}
		\item \emph{normalised}: $\psi|_{\Gz}=0$;
		\item \emph{symmetric}: $ \psi(g) = \psi(g^{-1})$ for all $g \in G$;
		\item 	\emph{conditionally negative-definite}, i.e.\ for each $x \in \Gz$, $n \in \N$, $g_1, \ldots g_n \in \G_x$ and $\lambda_1, \ldots, \lambda_n \in \R$ such that $\sum_{i=1}^n \lambda_i = 0$ we have
		\[ \sum_{i,j=1}^n \overline{\lambda_i}\psi(g_i g_j^{-1}) \lambda _j \leq 0,\]
		\item \emph{locally proper}: the function $(\psi, r, s): \G \to \R \times \Gz \times \Gz$ is proper.	
	\end{rlist}
\end{defn}
\begin{rem}\label{rem:Haagerup_affine_actions} Suppose that the groupoid $\G$ is $\sigma$-compact. 
 If $\G$ acts properly on a continuous field $H=(H_x)_{x\in \Gz}$ of affine Euclidean spaces, see \cite[Definition 3.2, 3.3]{Tu}, then the field admits a continuous section that allows us to write the action  as $\gamma \xi=\alpha_\gamma(\xi)+b(\gamma)$ where $\alpha$ is the linear part and $\gamma\to b(\gamma)\in H_{r(\gamma)}$ is a cocycle. 
Putting $\psi(\gamma):=\|b(\gamma)\|^2$ we obtain a locally proper negative type function. 
Moreover, every locally proper function of negative type has this form, see \cite[Proposition 3.8]{Tu}.
Hence, $\G$  admits a locally proper negative type function if and only if  $\G$ acts properly on a continuous field of affine Euclidean spaces.
In particular, this holds when $\G$ is amenable, by \cite[Lemma 3.5]{Tu}.
\end{rem}

It is standard that if $h: \G \to \C$ is positive-definite then for each $g \in \G$ we have  $h(g) = \overline{h(g^{-1})}, 
|h(g)| \leq h(r(g))$. Further if the function as above is real-valued, then to verify positive-definiteness it suffices to show the corresponding condition working only with real coefficients. Normalised symmetric conditionally negative-definite functions take only non-negative values. Finally if $\Gz$ is compact then a function $\psi:\G \to \R$ as above is locally proper if and only if it is proper in the usual sense (as a function from $\G$ to $\R$). 
We will make an extensive use of  countable exhaustions by compact sets, as discussed in the lemma below (which is likely well-known, but we could not locate a reference).

\begin{lem}\label{lem:rudin}
	A locally compact Hausdorff space  is $\sigma$-compact if and only if it admits a countable exhaustion by compact sets, i.e.\ there is a sequence $(L_n)_{n \in \N}$ of compact sets
	whose union is the whole space and  $L_n$ is contained in the interior of $L_{n+1}$, for each $n\in \N$.
\end{lem}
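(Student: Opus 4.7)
The backward direction is immediate: if such an exhaustion $(L_n)_{n\in\N}$ exists, then $X=\bigcup_{n\in\N}L_n$ is a countable union of compact sets, hence $\sigma$-compact.

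For the forward direction, suppose $X$ is locally compact, Hausdorff and $\sigma$-compact. Write $X=\bigcup_{n\in\N}K_n$ for some sequence of compact sets $K_n$. The plan is to construct the exhaustion $(L_n)$ inductively using the following basic fact, which I would establish first: in a locally compact Hausdorff space, every compact set $K$ is contained in the interior of some compact set. Indeed, by local compactness each $x\in K$ has an open neighbourhood $U_x$ with compact closure $\overline{U_x}$; by compactness of $K$ finitely many such $U_{x_1},\dots,U_{x_m}$ cover $K$, and then $L:=\overline{U_{x_1}}\cup\cdots\cup \overline{U_{x_m}}$ is compact with $K\subseteq U_{x_1}\cup\cdots\cup U_{x_m}\subseteq \operatorname{int}(L)$.

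Given this, set $L_1:=K_1$ (replacing it by a compact neighbourhood if desired). Having constructed compact sets $L_1,\dots,L_n$ with $L_i\subseteq \operatorname{int}(L_{i+1})$ for $i<n$ and $K_i\subseteq L_i$ for $i\le n$, apply the fact above to the compact set $L_n\cup K_{n+1}$ to obtain a compact set $L_{n+1}$ with $L_n\cup K_{n+1}\subseteq \operatorname{int}(L_{n+1})$. This gives simultaneously $L_n\subseteq \operatorname{int}(L_{n+1})$ and $K_{n+1}\subseteq L_{n+1}$. Hence $X=\bigcup_{n\in\N}K_n\subseteq \bigcup_{n\in\N}L_n\subseteq X$, and $(L_n)_{n\in\N}$ is the desired exhaustion.

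The only step requiring actual argument is the auxiliary fact about compact neighbourhoods of compact sets, which is a straightforward application of local compactness together with a finite subcover; everything else is a routine induction. I do not anticipate any genuine obstacle.
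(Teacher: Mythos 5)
Your proof is correct and follows essentially the same route as the paper: the paper also sets $L_1:=K_1$ and inductively encloses $L_n\cup K_{n+1}$ in the interior of a compact set, merely citing Rudin's Theorem 2.7 for the auxiliary fact that you prove directly via local compactness and a finite subcover.
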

\begin{proof}
	The `if' part is clear. To show the `only if' part let $(K_n)_{n\in \N}$  be a sequence of compact sets  that cover the  space. 
	Put $L_1:=K_1$. By \cite[Theorem 2.7]{Rudin87}, for any open $U$ containing $L_1\cup K_2$ there is an open set $V$ with  compact closure such that
	$L_1\cup K_2\subseteq V\subseteq \overline{V}\subseteq U$. Put $L_2=\overline{V}$. Apply the same fact  to $L_2\cup K_3$ to find $L_3$, and proceed by induction.  
\end{proof}

The proof of the next fact 
uses standard ideas,  appearing in different setups for example in \cite{Clare} in the setting of measured groupoids, in  \cite{Jolissaint} for measured equivalence relations.
\begin{prop} \label{Haagequivgeneral}
	Let $\G$ be an \'etale locally compact  Hausdorff groupoid. Consider the following conditions: 
	\begin{enumerate}
	\item\label{enu:Haagequivgeneral1} $\G$ admits a locally proper negative type function;
	\item \label{enu:Haagequivgeneral2} 
	there exists a sequence of continuous positive-definite functions $(h_n)_{n \in \N}$ such that
	\begin{rlist}
		\item each $h_n$ is \emph{normalised}: $h_n|_{\Gz}=1$,
		\item each $h_n$ is \emph{locally $C_0$}: for each  compact set $K\subseteq \Gz$ 
		we have
		$h_n|_{\G_K^K}\in C_0(\G_K^K)$ where $\G_K^K:=r^{-1}(K)\cap s^{-1}(K)$,
		\item the sequence converges to $1$ uniformly on compact sets;
	\end{rlist}
\item\label{enu:Haagequivgeneral3}  there is a  net $(h_i)_{i \in \Ind}\subseteq  C_0(\G)$ of  positive-definite functions, with values in the unit disk and converging to $1$ uniformly on 
	compact subsets of $\G$.	
	\end{enumerate}
	Then \eqref{enu:Haagequivgeneral1}$\Rightarrow$\eqref{enu:Haagequivgeneral2}$\Rightarrow$\eqref{enu:Haagequivgeneral3}. 
	If $\G$ is $\sigma$-compact, then \eqref{enu:Haagequivgeneral1}$\Leftrightarrow$\eqref{enu:Haagequivgeneral2} and a net in \eqref{enu:Haagequivgeneral3} may be replaced by a sequence.
	If $\G$ is $\sigma$-compact and $\Gz$ is a  union of compact open sets, 
	then all the conditions \eqref{enu:Haagequivgeneral1},\eqref{enu:Haagequivgeneral2},\eqref{enu:Haagequivgeneral3} are equivalent. 
\end{prop}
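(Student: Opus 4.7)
The plan is to establish $(1)\Rightarrow(2)\Rightarrow(3)$ unconditionally and then to handle the reverse implications under the indicated hypotheses. I expect the main obstacle to be the Schoenberg-type reconstruction $(2)\Rightarrow(1)$ in the $\sigma$-compact case, where a weighted subsequence must be chosen that simultaneously converges pointwise (giving continuity) and forces compact level sets over compact pieces of $\Gz$ (giving local properness).

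For $(1)\Rightarrow(2)$, set $h_n(\gamma):=e^{-\psi(\gamma)/n}$. Schoenberg's theorem, applied pointwise to each conditionally negative-definite matrix $[\psi(g_ig_j^{-1})]_{g_i,g_j\in\G_x}$, yields positive-definiteness; normalization and continuity transfer from $\psi$. Local properness of $(\psi,r,s)$ says exactly that $\{\gamma\in\G_K^K:\psi(\gamma)\le M\}$ is compact for every compact $K\subseteq\Gz$ and $M>0$; hence $\psi|_{\G_K^K}$ is proper and $h_n|_{\G_K^K}\in C_0(\G_K^K)$. Uniform convergence on compacts follows from uniform continuity of $t\mapsto e^{-t}$ on the bounded image of a compact $L\subseteq\G$ under $\psi$. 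For $(2)\Rightarrow(3)$, I would use a cut-off trick: for $\chi\in C_c(\Gz)$ with $0\le\chi\le 1$, the function $\tilde\chi(\gamma):=\chi(r(\gamma))\chi(s(\gamma))$ is positive-definite, since $[\tilde\chi(g_ig_j^{-1})]=[\chi(r(g_i))\chi(r(g_j))]$ is rank-one positive, and it lies in $C_c(\G)$. Pointwise products preserve positive-definiteness, so $\tilde\chi\cdot h_n$ is positive-definite and, being supported where $h_n$ is locally $C_0$, belongs to $C_0(\G)$. A net indexed by pairs $(\chi,n)$ (with $\chi$ increasing towards the constant $1$ and $n\to\infty$) satisfies (3); in the $\sigma$-compact case Lemma \ref{lem:rudin} lets one diagonalize to a sequence.

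For the hardest step $(2)\Rightarrow(1)$, replace each $h_n$ by $\mathrm{Re}\,h_n=(h_n+h_n\circ\mathrm{inv})/2$, which is still positive-definite, normalised, locally $C_0$, real, and symmetric, and set $\psi_n:=1-\mathrm{Re}\,h_n$, a continuous, normalised, symmetric, conditionally negative-definite function with $0\le\psi_n\le 2$. Using an exhaustion $(L_m)$ of $\G$ by compacts and uniform convergence, pass to a subsequence with $\sup_{L_n}\psi_n\le 2^{-n}/n$, and define $\psi:=\sum_{n\ge 1}n\,\psi_n$. The tail estimate on each $L_m$ gives uniform convergence (hence continuity), and pointwise sums preserve conditional negative-definiteness. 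For local properness, fix compact $K\subseteq\Gz$ and $M>0$ and pick $n>M$. Then
\[
\{\psi\le M\}\cap\G_K^K\subseteq\{\psi_n\le M/n\}\cap\G_K^K=\{\mathrm{Re}\,h_n\ge 1-M/n\}\cap\G_K^K,
\]
and the right-hand side is compact because $\mathrm{Re}\,h_n|_{\G_K^K}\in C_0$ and $1-M/n>0$; the original level set is closed in this compact set, hence compact.

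For $(3)\Rightarrow(2)$ under the additional hypothesis on $\Gz$, combine $\sigma$-compactness with the compact-open cover to obtain an increasing exhaustion $K_n\uparrow\Gz$ by \emph{compact open} sets. From the net extract a sequence $h_{i_n}$ converging to $1$ uniformly on a compact exhaustion of $\G$, with $h_{i_n}|_{K_n}>1/2$. Since $K_n$ is clopen, $\G_{K_n}^{K_n}$ is a clopen subgroupoid; define
\[
\tilde h_n(\gamma):=\frac{h_{i_n}(\gamma)}{\sqrt{h_{i_n}(r(\gamma))h_{i_n}(s(\gamma))}}\ \text{on}\ \G_{K_n}^{K_n},\quad \tilde h_n:=0\ \text{elsewhere,}
\]
which is continuous and positive-definite: on each fiber the matrix splits according to whether $r(g_i)\in K_n$, with the $K_n$-block being a diagonal rescaling of a positive block and the complementary block vanishing. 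Adding the continuous positive-definite function $\gamma\mapsto\mathbf{1}_{K_n^c}(r(\gamma))\mathbf{1}_{K_n^c}(s(\gamma))$ produces $g_n$ with $g_n|_{\Gz}=\mathbf{1}_{K_n}+\mathbf{1}_{K_n^c}=1$; the locally $C_0$ property and uniform convergence to $1$ on compacts are immediate once one observes that any compact $K'\subseteq\G$ is eventually contained in $\G_{K_n}^{K_n}$, where $g_n$ reduces to $\tilde h_n$ and $\tilde h_n\to 1$ uniformly since $h_{i_n}\to 1$ does.
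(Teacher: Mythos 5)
Your arguments for \eqref{enu:Haagequivgeneral1}$\Rightarrow$\eqref{enu:Haagequivgeneral2}, \eqref{enu:Haagequivgeneral2}$\Rightarrow$\eqref{enu:Haagequivgeneral3}, and \eqref{enu:Haagequivgeneral2}$\Rightarrow$\eqref{enu:Haagequivgeneral1} are correct and essentially identical to the paper's (the paper uses general weights $\alpha_n$ with $\sum_n\alpha_n\epsilon_n<\infty$ and bounds $\psi$ from below by a finite partial sum, where you fix $\alpha_n=n$, $\epsilon_n=2^{-n}/n$ and use the single term $n\psi_n$; both work).

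The last step, however, has a genuine gap. Your $g_n=\tilde h_n+u_n$ with $u_n(\gamma)=1_{K_n^c}(r(\gamma))\,1_{K_n^c}(s(\gamma))$ is indeed continuous, positive-definite, normalised, and converges to $1$ uniformly on compacts, but it is \emph{not} locally $C_0$, so it does not witness condition \eqref{enu:Haagequivgeneral2}. The point is that (2)(ii) is a condition on each \emph{fixed} $g_n$ and \emph{every} compact $K\subseteq\Gz$, whereas your closing observation (``any compact $K'\subseteq\G$ is eventually contained in $\G_{K_n}^{K_n}$'') only controls the tail of the sequence. For a fixed $n$ and a compact $K\not\subseteq K_n$, the restriction $u_n|_{\G_K^K}$ is the indicator of the clopen set $\G_{K\setminus K_n}^{K\setminus K_n}$, and $\G_{K'}^{K'}$ is in general non-compact for compact $K'\subseteq\Gz$ (take $\G$ to be a disjoint union of two infinite discrete groups $\Gamma_1\sqcup\Gamma_2$ over $\Gz=\{x_1,x_2\}$, $K_1=\{x_1\}$, $K=\Gz$: then $g_1|_{\G_K^K}=\tilde h_1+1_{\Gamma_2}$ contains the constant $1$ on the infinite set $\Gamma_2$). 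This is exactly why the paper does not attempt \eqref{enu:Haagequivgeneral3}$\Rightarrow$\eqref{enu:Haagequivgeneral2} directly but instead proves \eqref{enu:Haagequivgeneral3}$\Rightarrow$\eqref{enu:Haagequivgeneral1}: it forms $\psi=\sum_n\alpha_n\,\textup{Re}\big(1_{\G_{K_n}^{K_n}}-\phi_n\big)$, where $\phi_n$ is your $\tilde h_n$; each summand is supported on $\G_{K_n}^{K_n}$ and is conditionally negative-definite precisely because of that support condition, and local properness over a given compact $K$ is then checked using only the indices $n$ with $K\subseteq K_n$. Your construction can be repaired along the same lines — e.g.\ sum $\alpha_n(1-\textup{Re}\,g_n)$ directly and verify local properness using only large $n$, or replace $g_n$ by $1_{\G_{K_n}^{K_n}}-\tilde h_n$ as summands — but as written the claim that $g_n$ satisfies (2)(ii) is false and the chain \eqref{enu:Haagequivgeneral3}$\Rightarrow$\eqref{enu:Haagequivgeneral2}$\Rightarrow$\eqref{enu:Haagequivgeneral1} does not close.
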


\begin{proof}  \eqref{enu:Haagequivgeneral1}$\Rightarrow$\eqref{enu:Haagequivgeneral2}:
	Assume  that $\psi$ is a locally proper negative type function, and define 
	\[h_n (\gamma)= \exp\left(-\frac{1}{n} \psi(\gamma)\right), \;\;\;\; n \in \N, \gamma \in G.\]
	Then $h_n$ is continuous and  normalised. It is positive-definite by well-known arguments (here we use the fact it is real-valued).
	Moreover the sequence $(h_n)_{n \in \N}$  converges to $1$ (uniformly on compacts, as $\psi$ is continuous, so bounded on compact sets). 
	It remains to note that each $h_n$ is locally $C_0$: without loss of generality we consider $n=1$. Fix $\epsilon\in (0,1)$  and note that the pre-image of the set $[0,- \log \epsilon] \times K \times K$ with respect to $(\psi, r, s)$ is compact -- denote it by $L$. 
	Then for $\gamma \in \G$ such that $r(\gamma), s(\gamma) \in K$ we have that $|h_1(\gamma)|< \epsilon$  if and only if $\gamma \notin L$.
	
	\eqref{enu:Haagequivgeneral2}$\Rightarrow$\eqref{enu:Haagequivgeneral3}: Let $(h_n)_{n \in \N}$ be as described in \eqref{enu:Haagequivgeneral2}. Then it is bounded by $1$ as using positive-definiteness we have $|h(\gamma)|\leq |h(r(\gamma))|=1$ for $\gamma\in \G$.
	We can modify  $(h_n)_{n \in \N}$, dropping the normalisation condition, so that its members are $C_0$, and not just locally $C_0$. 
	To this end, we fix an approximate unit $(f_i)_{i\in I}\subseteq C_c(\Gz)^+$ for $C_0(\Gz)$.
	Then the net $((f_i\circ r) h_n (f_i\circ s))_{(n,i) \in \N\times I}$ satisfies the conditions in \eqref{enu:Haagequivgeneral3}; in particular each of the functions is positive-definite and is bounded by $1$.

From now on \emph{we assume $\G$ is $\sigma$-compact}. By Lemma \ref{lem:rudin}, $\G$  admits a countable exhaustion $(L_n)_{n \in \N}$ by compact sets.
	In particular, if $(h_i)_{i \in \Ind}$ is a net as in	\eqref{enu:Haagequivgeneral3} for each $n\in \N$ we may 
	choose $i_n\in \Ind$ so that $\|h_{i_n}-1_{L_n}\|_\infty <1/n$. Then the sequence $h_{i_n}$ converges to $1$ uniformly on 
	compact subsets of $\G$. Hence the net $(h_i)_{i \in \Ind}$ might be replaced by a sequence.
	
\eqref{enu:Haagequivgeneral2}$\Rightarrow$\eqref{enu:Haagequivgeneral1}:
 Fix two sequences of strictly positive numbers $(\alpha_n)_{n \in \N}$ and $(\epsilon_n)_{n \in \N}$ such that the first one is increasing to infinity, and the series $\sum_{n \in \N} \alpha_n \epsilon_n $ is convergent.
	Then proceed as follows: for each $n \in \N$ find a function $\phi_n:\G \to \C$ which is positive-definite, normalised, vanishes at infinity, and such that $|1-\phi_n(\gamma)|< \epsilon_n$ for $\gamma \in L_n$  (these are naturally just chosen by picking a subsequence of $(h_n)_{n \in \N}$). Then define the function $\psi:\G \to \R$ by the formula:
	\[ \psi(\gamma) = \sum_{n=1}^\infty \alpha_n \textup{Re} (1- \phi_n(\gamma)), \;\;\; \gamma \in \G.    \]
	The series above converges uniformly on compact subsets (as any of these is contained in almost all $L_n$), so yields a continuous function, which is conditionally negative-definite as a sum of  conditionally negative-definite  functions. It is obviously normalised.
	
	It remains to check that it is locally proper. Another exhaustion argument (for example) guarantees that any compact subset of 
	$\R \times \Gz \times \Gz$ is contained in a set of the form $[-M,M]\times K \times K$ for some $M >0$ and $K\subseteq \Gz$ compact. Fix then such $M$ and $K$. 
	Let $k \in \N$ be such that $\sum_{n=1}^k \alpha_n >2M$ and for each $n=1, \ldots, k$ find a compact set $L_n\subseteq \G$ such that for $\gamma \in \G\setminus L_n$ such that $r(\gamma), s(\gamma) \in K$ we have $|\phi_n(\gamma)| \leq \frac{1}{2}$. Then put $L = \bigcup_{n=1}^k L_n$. If $\gamma \in \G \setminus L$,  $r(\gamma), s(\gamma) \in K$, then
	\[ 
	\psi(\gamma) \geq  \sum_{n=1}^k \alpha_n \textup{Re} (1- \phi_n(\gamma)) \geq \sum_{n=1}^k \alpha_n  (1- |\phi_n(\gamma)|) \geq M.  
	\]
	Thus  the pre-image of the set $[-M, M] \times K \times K$ with respect to $(\psi, r, s)$ is contained in $L$. This ends the proof of \eqref{enu:Haagequivgeneral2}$\Rightarrow$\eqref{enu:Haagequivgeneral1}.

Assume now that  $\G$ is $\sigma$-compact and $\Gz$ is a  union of compact open sets. We will then show that \eqref{enu:Haagequivgeneral3}$\Rightarrow$\eqref{enu:Haagequivgeneral1}. 
		There is an exhaustion $(L_n)_{n \in \N}$ of $\G$ such that $r(L_n)=s(L_n)$ and this set, denoted by $K_n$, is compact open in $
	\Gz$, for all $n\in \N$. Indeed, our assumptions clearly imply that there is an exhaustion $(K_n)_{n \in \N}$ of $\Gz$ by compact open sets  and an exhaustion $(\tilde{L}_n)_{n \in \N}$ of $\G$ by compact sets.  Then it suffices to find an increasing sequence $(m_n)_{n \in \N}$ such that $K_n \subseteq \tilde{L}_{m_n}$ for each $n \in \N$ and put $L_n:=\tilde{L}_{m_n}\cap s^{-1}(K_n)\cap r^{-1}(K_n)$. 
	
	Let $(h_n)_{n=1}^\infty\subseteq  C_0(\G)$ be   an approximate unit	   consisting  of  positive-definite functions.
	We can modify this sequence so that for each $n\in \N$  the $n$-th element of the sequence is normalised on $K_n$. Indeed, we may find 
	an increasing sequence of  natural numbers $(k_n)_{n\in \N}$ such that $\|h_{k_n}|_{K_n} - 1_{K_n}\|< \frac{1}{2}$. Consider   the new functions defined by the formula
	\[ 
	\widetilde{h_n}(\gamma) = \begin{cases}
	(h_{k_n}(r(\gamma))^{-\frac{1}{2}}  h_{k_n}(\gamma) (h_{k_n}(s(\gamma))^{-\frac{1}{2}},&  \gamma \in s^{-1}(K_n)\cap r^{-1}(K_n),
	\\
	0, & \gamma \notin  s^{-1}(K_n)\cap r^{-1}(K_n).
		\end{cases}
	\]
		The function $\widetilde{h_n}$ is continuous, as $K_n$ is clopen in $\G$, and explicit verification shows it is positive-definite. 
		Thus the sequence  $(\widetilde{h_n})_{n =1}^\infty$
	satisfies all the requirements, including the normalization condition: $\widetilde{h_n}|_{K_n}=1$, $n\in \N$. 	
	Choose  sequences $(\alpha_n)_{n\in \N}$ and  $(\epsilon_n)_{n\in \N}$ as in the proof of  the implication \eqref{enu:Haagequivgeneral2}$\Rightarrow$\eqref{enu:Haagequivgeneral1}.
	By choosing a subsequence   of $(\widetilde{h_n})_{n =1}^\infty$  we get a sequence $(\phi_n)_{n\in \N}$ such that for
	each $n\in \N$,  $\phi_n\in C_0(\G_{K_{n}}^{K_{n}})$, $\phi_n|_{K_n}=1$ and $|1-\phi_n(\gamma)|< \epsilon_n$ for $\gamma \in L_n$. Then define the function $\psi:\G \to \R$ by the formula:
	\[ \psi(\gamma) = \sum_{n=1}^\infty \alpha_n \textup{Re} (1_{\G_{K_n}^{K_n}}(\gamma)- \phi_n(\gamma)), \;\;\; \gamma \in \G.    \]
	The series above as before converges uniformly on compact subsets so yields a continuous function, which is conditionally negative-definite as a sum of  conditionally negative-definite  functions; the fact that the individual factors are conditionally negative-definite can be verified directly, and uses the fact that each $\phi_n$ is supported on $\G_{K_n}^{K_n}$. The function $\psi$ is obviously normalised and symmetric. 
\end{proof}
\begin{rem}
It follows from the proof above that if $\G$ is $\sigma$-compact and $\Gz$ is a  union of compact open sets, 
then the net in \eqref{enu:Haagequivgeneral3}, if it exists, can be arranged so that $\sup_{i\in \Ind, x\in \Gz} |h_i(x)|\leq 1$.
\end{rem}
Let $\G$ be a $\sigma$-compact  \'etale  Hausdorff groupoid. Then $\Gz$ is a  union of compact open sets if and only if the $C^*$-inclusion 
$C_0(\Gz)\subseteq C_r^*(\G)$ is \emph{relative $\sigma$-unital} in the sense of  \cite[Definition 2.1]{Matsumoto}.
This condition is automatically satisfied when $\Gz$ is compact  or totally disconnected (equivalently, the inclusion is unital or the groupoid $\G$ is ample). 
We do not know whether this condition is necessary to deduce the equivalence between conditions \eqref{enu:Haagequivgeneral3} and \eqref{enu:Haagequivgeneral1}  in Proposition \ref{Haagequivgeneral}. On the other hand, we would like to view the Haagerup property as an approximation property. Therefore in this paper we  choose the condition \eqref{enu:Haagequivgeneral2} as the most convenient one.
It generalises to groupoid actions as follows.

	\begin{defn}\label{def:Haagerup_for_twisted_actions}
	Let $(\A,\G,\Sigma,\alpha)$ be a twisted groupoid  $C^*$-dynamical system
	such that the bundle $\A$   has a continuous unit section.
	We say that $(\A,\G,\Sigma,\alpha)$ has the \emph{Haagerup  property} 
	if 
	if there is a net $(h_i)_{i \in \Ind}$ of continuous positive-definite sections of $\A*_r\G$ such that
	\begin{rlist}
		\item\label{item:Haagerup_for_twisted_actions1} each $h_i$ is \emph{normalised}: $h_i|_{\Gz}$ is the unit section of $\A$,
		\item\label{item:Haagerup_for_twisted_actions2}
		each $h_i$ is \emph{locally $C_0$}: for each  compact set $K\subseteq \Gz$ 
		we have
		$h_i|_{\G^K_K} \in  C_0(\A|_K*_r\G^K_K)$,
		\item\label{item:Haagerup_for_twisted_actions3} the net converges to the unit section uniformly on compact sets: for every compact $ K\subseteq \G$ we have
	$\sup_{\gamma\in K} \|h_i(\gamma) - 1_{r(\gamma)}\| \stackrel{i \in \Ind}{\longrightarrow} 0$.
		\end{rlist}
The groupoid $G$ itself has the \emph{Haagerup property} if the above holds for the trivial bundle $\A=\Gz\times \C$ (i.e.\
 there is a net with properties listed in Proposition \ref{Haagequivgeneral} (\ref{enu:Haagequivgeneral2})).
\end{defn}

\begin{rem}\label{Haagerup for twisted actions} 
We could define the  Haagerup property above without the assumption that $\A$ has a continuous unit section by replacing the last condition
by the following one: for every compact $a\in C_c(\A*_r\G)$ we have
	$ \|h_ia - a\|_{\infty} \stackrel{i \in \Ind}{\longrightarrow} 0.$
However, this would itself imply that $A:=C_0(\A)$ has a central approximate unit. Indeed, by Proposition \ref{prop:pdCrossed} 
positive-definite sections $(h_i)_{i \in \Ind}$ are necessarily central, that is we necessarily have
$h_i(\gamma)\in Z(M(A_{r(\gamma)}))$ for each $\gamma$ and $i\in \Ind$. Thus 
$(m_{h_i}|_A)_{i \in \Ind}$ would be a central approximate unit in  $A$. 

One of the benefits from the continuous unit section assumption on  $\A$
is that we immediately obtain the desirable fact that if $\G$ has the Haagerup  property then every twisted $C^*$-dynamical systems  $(\A,\G,\Sigma,\alpha)$  the Haagerup  property. 
\end{rem}

\begin{prop}\label{prop:unital_untwisted_Haagerups}
A  $C^*$-dynamical system $(\A,\G,\alpha)$ with
$\Gz$ compact and  $A:=C_0(\A)$ unital has the Haagerup  property if and only if there is a net $(h_i)_{i \in \Ind}\subseteq C_0(\A*_r\G)$ of
	positive-definite sections  that  converge to the unit section  uniformly on compact sets.
\end{prop}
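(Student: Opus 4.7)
The forward implication is essentially definitional: when $\Gz$ is compact, taking $K=\Gz$ in condition \eqref{item:Haagerup_for_twisted_actions2} of Definition \ref{def:Haagerup_for_twisted_actions} forces $h_i\in C_0(\A*_r\G)$, so a witness of the Haagerup property yields, after forgetting normalisation, a net of the type described in the proposition.

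For the converse, given $(h_i)_{i\in \Ind}\subseteq C_0(\A*_r\G)$ as in the statement, I plan to produce a normalised net by means of a renormalisation trick in the spirit of the proof of \eqref{enu:Haagequivgeneral3}$\Rightarrow$\eqref{enu:Haagequivgeneral1} of Proposition \ref{Haagequivgeneral}, adapted to the operator-valued setting. Setting $p_i:=h_i|_{\Gz}\in A$, positive-definiteness with singleton $F$ yields $p_i\geq 0$, while Proposition \ref{prop:pdCrossed} guarantees that $p_i$ takes central values. Since $\Gz$ is compact and $h_i\to 1_\A$ uniformly on compacts, $p_i\to 1_A$ in norm; passing to a cofinal subnet I may assume each $p_i$ is invertible in $A$, with central positive square root $p_i^{1/2}$ and bounded inverse $p_i^{-1/2}$. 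The candidate normalised sections are then
\[
\widetilde{h}_i(\gamma) := p_i(r(\gamma))^{-1/2}\, h_i(\gamma)\, \alpha_\gamma\!\bigl(p_i(s(\gamma))^{-1/2}\bigr), \qquad \gamma\in \G.
\]
Three of the four requirements will be immediate: continuity of $\widetilde{h}_i$ is clear; $\widetilde{h}_i|_{\Gz}$ is the unit section because $\alpha_x=\mathrm{id}$ for $x\in\Gz$; the pointwise norm bound $\|\widetilde{h}_i(\gamma)\|\le \|p_i^{-1/2}\|^2\,\|h_i(\gamma)\|$ keeps $\widetilde{h}_i$ in $C_0(\A*_r\G)$; and the triangle inequality, combined with $p_i\to 1_A$ and $h_i\to 1$ uniformly on compacts, yields uniform convergence of $\widetilde{h}_i$ to the unit section on compact subsets of $\G$.

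The only substantial step, and the principal obstacle, is verifying that each $\widetilde{h}_i$ remains positive-definite. Fix $x\in\Gz$ and a finite $F\subseteq \G_x$. For $\gamma,\eta\in F$ one has $r(\gamma\eta^{-1})=r(\gamma)$, $s(\gamma\eta^{-1})=r(\eta)$ and the cocycle identity $\alpha_\gamma^{-1}\circ\alpha_{\gamma\eta^{-1}}=\alpha_\eta^{-1}$, so a direct computation will give
\[
\alpha_\gamma^{-1}\!\bigl(\widetilde{h}_i(\gamma\eta^{-1})\bigr)= q_i(\gamma)^{-1}\,\alpha_\gamma^{-1}\!\bigl(h_i(\gamma\eta^{-1})\bigr)\, q_i(\eta)^{-1},
\]
where $q_i(\gamma):=\alpha_\gamma^{-1}(p_i(r(\gamma)))^{1/2}$ is a central, positive, invertible element of $A_x$. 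Because the $q_i(\gamma)$ are central and self-adjoint, the matrix $\bigl[\alpha_\gamma^{-1}(\widetilde{h}_i(\gamma\eta^{-1}))\bigr]_{\eta,\gamma\in F}$ is a two-sided congruence $DMD^*$ of the matrix $M=\bigl[\alpha_\gamma^{-1}(h_i(\gamma\eta^{-1}))\bigr]_{\eta,\gamma\in F}$ by the diagonal self-adjoint matrix $D$ with entries $q_i(\gamma)^{-1}$, and is therefore positive by positive-definiteness of $h_i$. This produces the desired net witnessing the Haagerup property of $(\A,\G,\alpha)$.
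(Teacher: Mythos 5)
Your proposal is correct and follows essentially the same route as the paper: the forward direction is the observation that locally $C_0$ equals $C_0$ for compact $\Gz$, and the converse uses exactly the renormalisation $\widetilde{h}_i(\gamma)=h_i(r(\gamma))^{-1/2}\,h_i(\gamma)\,\alpha_\gamma\bigl(h_i(s(\gamma))^{-1/2}\bigr)$ together with the diagonal congruence $D M D$ (with $D$ self-adjoint, so centrality is not actually needed there) to preserve positive-definiteness. The only cosmetic difference is that you invoke Proposition \ref{prop:pdCrossed} for centrality of $h_i|_{\Gz}$, which the paper's argument gets by without.
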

\begin{proof}
Since $\Gz$ is compact, being locally $C_0$ is the same as being $C_0$. Hence the ``only if'' part is trivial.
So  assume $(h_i)_{i \in \Ind}\subseteq C_0(\A*_r\G)$ is a net of
	positive-definite sections  that  converge to the unit section  uniformly on compact sets. 
	We need to  modify the elements of this net so that they are normalised. Since  $\Gz$ is compact this is  easy to observe: for big enough $i\in \Ind$, say $i \geq i_0$, we have that $\|h_i|_{\Gz} - 1_{\Gz}\|< \frac{1}{2}$. Consider  for $i\geq i_0$ the new functions defined by the formula
	\[ \widetilde{h_i}(\gamma) = (h_{i}(r(\gamma))^{-\frac{1}{2}}  h_{i}(\gamma) \alpha_{\gamma}\Big((h_{i}(s(\gamma))^{-\frac{1}{2}}\Big), \qquad \gamma \in \G.  \]
	The function $\widetilde{h_i}$ is continuous, as $\Gz$ is clopen in $\G$, and in fact $\widetilde{h_i}\in C_0(\A*_r\G)$.
	By construction $\widetilde{h_i}|_{\Gz}=1_{\Gz}$. 
	Since $h_i$ is positive-definite for 	
	 any finite set $F
\subseteq \G_x$, $x \in \Gz$, we have $\left[\alpha_{\gamma}^{-1} \left(h_i(\gamma\eta^{-1})\right)\right]_{\eta,\gamma \in F} \geq 0$ in $M_{|F|}(A_x)$. Putting $a_\gamma:=\alpha_{\gamma}^{-1}\big((h_{i}(r(\gamma))^{-\frac{1}{2}}\big)$, $\gamma\in F$ we get
positive elements in $A_x$ and  
$$
\left[\alpha_{\gamma}^{-1} \left(\widetilde{h_i}(\gamma\eta^{-1})\right)\right]_{\eta,\gamma \in F} 
=\left[a_\gamma\alpha_{\gamma}^{-1} \big(h_i(\gamma\eta^{-1})\big)   a_\eta\right]_{\eta,\gamma \in F} \geq 0.
$$
Hence $\widetilde{h_i}$ is positive-definite.  Let  $K\subseteq \G$ be a compact set.  The net   $(\widetilde{h_i}|_{K})_{i \in \Ind}$ 
 converges uniformly to the unit section $1_K$, as   the net $h_i$ 
converges uniformly to the unit section on  $K \cup r(K)\cup s(K)$, and therefore  
the nets of sections $K\in\gamma \to h_{i}(r(\gamma))^{-\frac{1}{2}}$ and  $K\in\gamma \to \alpha_{\gamma}\Big(h_{i}(s(\gamma))^{-\frac{1}{2}}\Big)$ converge uniformly to the unit section on $K$.
\end{proof}
\begin{rem}\label{rem:Haagerup for twisted actions2} 
By Proposition \ref{prop:unital_untwisted_Haagerups}, Definition \ref{def:Haagerup_for_twisted_actions} generalises 
the  Haagerup property introduced  in \cite[Section 3]{dong_ruan} for discrete group actions on unital $C^*$-algebras.
Indeed,  
if $\G=\Gamma$ is a discrete group and the twist is trivial, then the groupoid action $(\A,\G,\alpha)$ is nothing but a group action  
$\alpha:\Gamma \curvearrowright A$ on a unital $C^*$-algebra $A=\A_e$. This  action has the  the Haagerup  property  if and only if  there exists a net  $(h_i)_{i \in \Ind}$ of functions $h_i: \Gamma \to Z(A)$ which are positive-definite, in the sense of Section 3 of \cite{dong_ruan},
and such that $h_i \to 1$ pointwise on $\Gamma$. A similar interpretation is
 true for actions of discrete groups twisted by $\T$-valued cocycles: 
again our definition means that  there exists a net  $(h_i)_{i \in \Ind}$ of functions $h_i: \Gamma \to Z(A)$ which are positive-definite in a twisted sense (see for example Definition 4.2 of \cite{BedosConti2}) and converge pointwise to $1$. Note however that even for non-twisted actions our notion differs from the Haagerup property of actions of discrete groups on $C^*$-algebras as defined in \cite{mstt}. Roughly speaking, in \cite{mstt} the authors allowed arbitrary mapping $\B$-multipliers.
\end{rem}
\begin{lem}\label{lem:Haagerup for twisted groupoids} 
Let $(\G,\Sigma)$ be a twisted groupoid 
where $\G$ is \'etale locally compact Hausdorff.
Treated  as a twisted action on  $C_0(\G^{(0)})$, $(\G,\Sigma)$ has the  Haagerup  property if and only if $\G$ has the  Haagerup  property.
\end{lem}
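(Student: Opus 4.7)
The plan is to observe that, when the coefficient bundle is the trivial line bundle, the whole twisted-action apparatus collapses at the scalar level, so that the two Haagerup-type conditions become literally identical. First I would unpack what the twisted action $(\A,\G,\Sigma,\alpha)$ on $A = C_0(\Gz)$ actually amounts to when $\A = \Gz \times \C$: each fiber $A_x = \C$ admits only the identity $*$-automorphism, so every $\alpha_\sigma$ is the identity of $\C$; the multipliers $u_x(z) \in UM(\C) = \T$ act trivially on $\C$ by conjugation, consistent with $\alpha_{(x,z)} = \mathrm{id}$; and sections of $M(\A *_r \G) = \A *_r \G$ are nothing but continuous scalar functions $h \colon \G \to \C$.

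Next I would translate the positive-definiteness condition of Definition \ref{defn:positive_definite} for such an $h$. Since $\alpha_{c(\gamma)}^{-1}$ is the identity of $\C$, the matrix $\bigl[\alpha_{c(\gamma)}^{-1}(h(\gamma\eta^{-1}))\bigr]_{\eta,\gamma \in F}$ collapses to $[h(\gamma\eta^{-1})]_{\eta,\gamma \in F} \in M_{|F|}(\C)$, so $h$ is positive-definite in the sense of Definition \ref{defn:positive_definite} precisely when it is positive-definite as a scalar function on $\G$. The choice of lift $c \colon F \to \Sigma$ is irrelevant, as already guaranteed by Lemma \ref{lem:independence_of_section}\eqref{enu:independence_of_section1}, and in particular the twist $\Sigma$ contributes nothing at this level.

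Finally I would match conditions \tu{(i)}--\tu{(iii)} of Definition \ref{def:Haagerup_for_twisted_actions} to those of Proposition \ref{Haagequivgeneral}\eqref{enu:Haagequivgeneral2}. The unit section of $\Gz \times \C$ is the constant function $1$, so normalisation reads $h_i|_{\Gz} \equiv 1$; the bundle $\A|_K *_r \G^K_K$ is a trivial scalar bundle over $\G^K_K$, so the locally $C_0$ condition reads $h_i|_{\G^K_K} \in C_0(\G^K_K)$; and uniform convergence to the unit section on compacts becomes uniform convergence to $1$. Hence the nets witnessing each version of the Haagerup property are one and the same.

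I do not anticipate any genuine obstacle; the statement is essentially a definitional unfolding. The only subtlety worth flagging is that the twist $\Sigma$ does produce a potentially non-trivial Fell line bundle $\L$ over $\G$ (in the sense of Remark \ref{rem:line_bundles_twists}), but the positive-definite objects used in the Haagerup condition live on $\A *_r \G$ rather than on $\L$, so $\Sigma$ leaves the relevant scalar condition untouched.
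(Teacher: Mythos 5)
Your proposal is correct and follows exactly the paper's own argument: identify $(\G,\Sigma)$ with the twisted system on the trivial line bundle $\Gz\times\C$, note that $\alpha$ and the multipliers $u_x(z)$ act trivially so that $C_0(\A*_r\G)\cong C_0(\G)$, and observe that the two notions of positive-definiteness and the three defining conditions then coincide verbatim. Your closing remark that the twist only manifests in the Fell line bundle $\L$, not in $\A*_r\G$, is precisely the point the paper's one-line proof leaves implicit.
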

\begin{proof}
We identify $(\G,\Sigma)$ with $(\A,\G,\Sigma,\alpha)$ where $\A=\Gz\times \C$, $\alpha_\gamma=\textup{id}$ and $u_x(z)=1$ for $\gamma\in \G$, $x\in \Gz$ and $z\in \T$. Hence $\A*_r\G\cong \G \times \C$ and therefore $C_0(\A*_r\G)\cong C_0(\G)$.  It is immediate that the respective notions of positive-definiteness coincide.
\end{proof}
We will now modify the definition  of the Haagerup property for a $C^*$-inclusion introduced in \cite{dong_ruan}. Recall that  
if  $E:B \to A\subseteq B$ is a faithful conditional expectation from a $C^*$-algebra $B$ onto $A$, then \(B_E\) denotes the right Hilbert $A$-module associated to $E$ via \eqref{eq:def_B_E}.
In fact $B_E$ is a $C^*$-correspondence over $A$ with natural left action. 
To define ``locally compact'' maps on $B_E$ we will use the notion of a \emph{Pedersen's ideal}, i.e.\  a minimal 
hereditary dense ideal (see \cite[5.6]{Pedersen}).
  Recall  that every $C^*$-algebra $A$ contains such an ideal; we denote it by $K(A)$. The ideal  $K(A)$ is a linear span of 
  $K(A)_+:=\{a\in A_+: a\leq \sum_{i=1}^{n}a_k, a_k\in K(A)_0\}$ where $K(A)_0:=\{f(a): a\in A_+, \,\, f\in C_c(0,\infty), f \geq 0\}$. 
  
\begin{lem} Let $\A$ be a $C^*$-bundle over a locally compact Hausdorff space $X$ that admits a continuous unit section. 
The Pedersen's ideal of $A=C^*(\A)$ is $C_c(\A)$. 
\end{lem}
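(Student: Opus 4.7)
The plan is to prove the two inclusions $K(A)\subseteq C_c(\A)$ and $C_c(\A)\subseteq K(A)$ separately.

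For the first inclusion, I will verify that $C_c(\A)$ is a dense hereditary two-sided ideal of $A$; since $K(A)$ is by definition the minimal such object, this suffices. Density follows from the Douady--dal Soglio-Herault theorem recalled at the beginning of Section~\ref{Sec:prelim}. The two-sided ideal property is immediate: for $f\in C_0(\A)$ and $g\in C_c(\A)$ the section $fg$ is supported in $\supp(g)$, hence compactly supported. For hereditariness, if $0\le a\le b$ in $A$ with $b\in C_c(\A)$, the fibrewise inequality $0\le a(x)\le b(x)$ in $A_x$ forces $a(x)=0$ whenever $b(x)=0$, whence $\supp(a)\subseteq\supp(b)$ and so $a\in C_c(\A)$.

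For the reverse inclusion, fix $f\in C_c(\A)_+$, set $M:=\|f\|$, and let $K:=\supp(f)\subseteq X$. Using Urysohn's lemma, pick $g\in C_c(X)$ with $0\le g\le 1$ and $g\equiv 1$ on $K$, and then, using the continuous unit section $x\mapsto 1_x$, define $a\in C_c(\A)_+\subseteq A_+$ by $a(x):=g(x)\cdot 1_x$. Choose any $h\in C_c(0,\infty)$ with $h\ge 0$ and $h(1)\ge M$. Because every evaluation map $A\to A_x$ is a $*$-homomorphism, continuous functional calculus on $A$ is performed fibrewise, so $h(a)(x)=h(g(x))\cdot 1_x$ for every $x\in X$. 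On $K$ this equals $h(1)\cdot 1_x\ge M\cdot 1_x\ge f(x)$ (the last inequality because $\|f(x)\|\le M$), while off $K$ one has $f(x)=0\le h(a)(x)$. Hence $f\le h(a)$ with $h(a)\in K(A)_0$, so $f\in K(A)_+$; taking linear spans yields $C_c(\A)\subseteq K(A)$.

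The only genuinely delicate step is the construction of the auxiliary element $a$: the continuous unit section lets us manufacture, in one stroke, a single compactly supported positive element of $A$ whose fibre spectra cluster around a prescribed value (here $1$), so that a single $h\in C_c(0,\infty)$ dominates $f$ after functional calculus. Without this hypothesis there appears to be no uniform way to produce such an $a$ of compact support across the varying fibres of the bundle.
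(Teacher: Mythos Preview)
Your proof is correct and follows essentially the same route as the paper's. The paper is simply more terse: it observes that $C_c(X)_+\subseteq K(A)_0$ directly (any $g\in C_c(X)_+$ satisfies $g=h(g)$ for a suitable $h\in C_c(0,\infty)$) and that every element of $C_c(\A)_+$ is dominated by some element of $C_c(X)_+$, while you construct the dominating element of $K(A)_0$ via an explicit bump function and functional calculus; the hereditary ideal argument for the other inclusion is identical.
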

\begin{proof} Recall that 
Since  $\A$ is unital, we may identify $C_c(X)$ with a subalgebra of $C_c(\A)\subseteq A$. 
Then obviously $C_c(X)_+\subseteq K(A)_0$ and every $a\in C_c(\A)_+$ is dominated by some element in $C_c(X)_+$.
Hence  $C_c(\A) \subseteq K(A)$. The reverse inclusion follows because $C_c(\A)$ is clearly a hereditary ideal in $A$.
\end{proof}

 \begin{defn}\label{defn:E-Haagerup}
Let $A \subseteq B$ be a nondegenerate $C^*$-inclusion  equipped with a faithful conditional expectation $E:B \to A$. 
 We say that $B$ has the \emph{$E$-Haagerup  property} 
 if there exists a  net $(\Phi_i)_{i \in \Ind}$
 of  completely positive maps on $B$ such that 
\begin{rlist}
		\item $\Phi_i|_A=id|_A$ for each $i \in \Ind$;
		\item 
		for each $i \in \Ind$, $\Phi_i$ extends to a bounded map  $\widetilde{\Phi}_i:B_E\to B_E$ which is ``locally compact'', that is 
		$k\widetilde{\Phi}_ik\in \K(B_E)$ for every $k\in K(A)$ (here $(k\widetilde{\Phi}_ik)(b):=k \widetilde{\Phi}_i(b)k$ for all $b \in B_E$);
		\item $\lim_{i \in \Ind}\|\Phi_i(b) - b\|_{B_E} = 0$ for every $b \in B$.
		\end{rlist}
 We  will say that a \emph{$C^*$-inclusion $A\subseteq B$ has the
	Haagerup  property} if the inclusion has the unique conditional expectation, denoted $E$, and $B$ has the $E$-Haagerup  property. 
\end{defn}

\begin{rem}\label{rem:Haagerup_for_inclusions}
Condition (i) above implies that $\Phi_i$ is   a contractive  $A$-bimodule map. Indeed, since we assume that $A$ and $B$ have a common (contractive) approximate unit $(\mu_\lambda)_{\lambda \in \Lambda}$ we get $\|\Phi\|=\lim_{\lambda\in \Lambda}\|\Phi_i(\mu_\lambda)\|=\lim_{\lambda\in \Lambda}\|\mu_\lambda\|=1$.
  Hence the multiplicative domain of $\Phi_i$ is equal to\
	$\{b\in B:\Phi_i(bb^*)=\Phi_i(b)\Phi_i(b^*),\,\, \Phi_i(b^*b)=\Phi_i(b^*)\Phi_i(b)\}$. 
	This together with (i) implies  that $\Phi_i$ is an $A$-bimodule map. 
 \end{rem}
\begin{rem}
A sufficient condition for a completely positive map $\Phi:B\to B$ to extend to a bounded map $\widetilde{\Phi}:B_E\to B_E$ with $\|\widetilde{\Phi}\|\leq m$ (where $m>0$), is that  $E\circ \Phi\leq m E$.
Indeed,  for every  $b\in B$ we then have
$$E(\Phi(b)^*\Phi(b))\leq \|\Phi\|E(\Phi(b^*b))\leq m \|\Phi\|E(b^*b),$$ 
cf. \cite{dong_ruan}. In fact, in the definition of the Haagerup property it is usually assumed (cf.\ for instance \cite{dong_ruan}, \cite{Dong}) that $E\circ \Phi\leq E$. We did not include it in our definition as in the context of twisted groupoid crossed products this can always be arranged, see Remark \ref{rem:on_sub_E_condition}.
\end{rem}

\begin{rem}
Dong and Ruan speak simply of the $A$-Haagerup property, but this is not quite precise, as shown by Suzuki in \cite{Suzuki}: in general the property depends not only on the pair $(A,B)$ but also on the choice of a conditional expectation $E$. On the other hand, there are natural situations when the inclusion $A\subseteq B$ has a  unique  conditional expectation. This holds  for instance when $B$ is the reduced cross-sectional $C^*$-algebra of a Fell bundle $\B$ over a topologically free \'etale locally compact Hausdorff groupoid $\G$, and more generally for
	 noncommutative Cartan subalgebras $A\subseteq B$, cf.\ \cite{Exel:noncomm.cartan}, \cite{BartoszRalf3}. 
\end{rem}

It is immediate that our definition is consistent with the one in \cite{Dong}. It is also consistent with the one given by  Dong and Ruan \cite{dong_ruan}, concerning unital inclusions, if we assume further  that the conditional expectation is tracial -- as is the case for the canonical expectation related to crossed products of actions of discrete groups on commutative algebras, being the main focus of \cite{dong_ruan}. 
\begin{prop}\label{prop:unital_Haagerups} Let $A \subseteq B$ be a unital inclusion of $C^*$-algebras and let 
$E:B \to A\subseteq B$ be a faithful tracial conditional expectation. 
Then $B$ has the $E$-Haagerup  property if and only  if there exists a  net $(\Phi_i)_{i \in \Ind}$
of	completely positive  $A$-bimodule maps on $B$ such that, 
for every $i \in \Ind$,  $\Phi_i$ extends to a compact map $\widetilde{\Phi}_i\in \K(B_E)$ and  $\lim_{i \in \Ind}\|\Phi_i(b) - b\|_{B_E} = 0$ for every $b \in B$.	
\end{prop}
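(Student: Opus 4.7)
The plan is to treat this equivalence as a renormalisation issue: the conditions of Definition \ref{defn:E-Haagerup} differ from those of the present proposition only in requiring $\Phi_i|_A = \mathrm{id}_A$, and I will show that this additional condition can always be arranged. The forward direction is then essentially formal: given a net $(\Phi_i)$ witnessing the $E$-Haagerup property, the unitality of $A$ gives $K(A) = A$, so taking $k=1$ in condition (ii) of Definition \ref{defn:E-Haagerup} yields $\widetilde{\Phi}_i \in \K(B_E)$; and the remark following that definition shows that $\Phi_i|_A = \mathrm{id}_A$ forces $\Phi_i$ to be $A$-bimodular.

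For the converse, suppose $(\Phi_i)_{i \in \Ind}$ is a net of completely positive $A$-bimodule maps with $\widetilde{\Phi}_i \in \K(B_E)$ and $\|\Phi_i(b) - b\|_{B_E} \to 0$ for all $b \in B$. I would first observe that bimodularity forces $\Phi_i(1) \in Z(A)_+$, since for each $a \in A$ one has $a \Phi_i(1) = \Phi_i(a) = \Phi_i(1) a$. Next, since $E|_A = \mathrm{id}_A$, the $B_E$-norm agrees with the $A$-norm on $A$, so the hypothesis at $b=1$ forces $\|\Phi_i(1) - 1\|_A \to 0$. Hence, for sufficiently large $i$, the element $\Phi_i(1)$ is positive and invertible in $A$; I would set $c_i := \Phi_i(1)^{-1/2} \in Z(A)_+$ and define
$$
\Psi_i(b) := c_i \, \Phi_i(b) \, c_i, \qquad b \in B.
$$
The map $\Psi_i$ is completely positive as a sandwich of a CP map; using centrality of $c_i$ together with bimodularity of $\Phi_i$ one checks $\Psi_i|_A = \mathrm{id}_A$; and since left and right multiplication by elements of $A$ are adjointable on $B_E$, $\widetilde{\Psi}_i$ is the composition of a compact operator with two adjointable ones and hence lies in $\K(B_E)$.

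The only step requiring genuine care will be verifying $\|\Psi_i(b) - b\|_{B_E} \to 0$ for every $b \in B$. My plan is to decompose
$$
\Psi_i(b) - b = c_i (\Phi_i(b) - b) c_i + (c_i - 1) b c_i + b (c_i - 1),
$$
and then bound each summand using that left and right multiplication by $c \in A$ on $B_E$ have operator norm at most $\|c\|_A$. The first summand vanishes by the assumed pointwise convergence and the uniform boundedness of $\|c_i\|_A$, while the remaining two vanish because $\|c_i - 1\|_A \to 0$. The main conceptual point --- and the place to stay alert --- is the transfer of norm convergence between $A$ and $B_E$: it depends crucially on $E$ restricting to the identity on $A$, which guarantees that the $A$-norm and the $B_E$-norm coincide there. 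The resulting net $(\Psi_i)$ then witnesses the $E$-Haagerup property of $B$.
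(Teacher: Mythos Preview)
Your overall approach --- renormalise by sandwiching with $\Phi_i(1)^{-1/2}$ --- is the same as the paper's. However, there is a genuine gap at a crucial step: the claim that bimodularity forces $\Phi_i(1)\in Z(A)_+$. From $a\,\Phi_i(1)=\Phi_i(a)=\Phi_i(1)\,a$ you only obtain $\Phi_i(1)\in (A'\cap B)_+$; nothing in the hypotheses guarantees that $\Phi_i(1)$ lies in $A$. Everything after this point in your argument rests on $c_i\in A$: you use it to pass from $\|\Phi_i(1)-1\|_{B_E}\to 0$ to $\|\Phi_i(1)-1\|_A\to 0$ (hence invertibility), to conclude that right multiplication by $c_i$ is adjointable on $B_E$, and to bound $\|b(c_i-1)\|_{B_E}$. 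None of these steps survives once $c_i$ is merely in $A'\cap B$.

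A clear sign that something is missing is that your proof never invokes the \emph{tracial} hypothesis on $E$, even though it appears in the statement. In the paper's argument this is precisely where traciality enters: since $c_i$ need only lie in $B$, one needs the estimate $\|bc\|_{B_E}\le \|c\|\,\|b\|_{B_E}$ for $b,c\in B$, and this right-multiplication bound follows from $E(c^*b^*bc)=E(bcc^*b^*)\le \|c\|^2 E(bb^*)$ together with $\|E(bb^*)\|=\|E(b^*b)\|$, both of which use traciality. With that in hand the $3\varepsilon$-decomposition you wrote down goes through with $c_i\in B$, and the compactness of $\widetilde\Psi_i$ follows since left and right multiplication by $c_i$ are then bounded (adjointable) on $B_E$. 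So the fix is not to strengthen the claim about $\Phi_i(1)$, but to drop it and use traciality instead.
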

\begin{proof} 
	
	Since $1\in A\subseteq B$ ``locally compact'' and ``compact'' on $B_E$ is the same.
Hence the ``only if'' part is clear by Remark \ref{rem:Haagerup_for_inclusions}. Let then $(\Phi_i)_{i \in \Ind}$ be 	completely positive  $A$-bimodule maps on $B$ such that for every $i \in \Ind$ we have
$\widetilde{\Phi}_i\in \K(B_E)$ and $\lim_{i \in \Ind}\|\Phi_i(b) - b\|_{B_E} = 0$ for  $b \in B$. Find 
 $i_0\in \Ind$ such that $\|\Phi_i(1) - 1\|< \frac{1}{2}$ for $i \geq i_0$. Note that $\Phi_i(1)$ is a positive element commuting with $A$ (as $\Phi_i$ is an $A$-bimodule map). Thus putting,  for $i \geq i_0$,
$$
\Psi_i(b):=\Phi_i(1)^{-\frac{1}{2}}\Phi_i(b) \Phi_i(1)^{-\frac{1}{2}}, \qquad b\in B, 
$$
we get a completely positive map $\Psi_i$   on $B$ such that 
 $\Psi_i|_A=id|_A$ and $\Psi_i$  extends to a compact map $\widetilde{\Psi}_i=\Phi_i(1)^{-\frac{1}{2}}\widetilde{\Phi}_i \Phi_i(1)^{-\frac{1}{2}}$ on $B_E$.
Moreover, for $b\in B$  we have $\lim_{i \geq i_0}\|\Psi_i(b) - b\|_{B_E} = 0$ 
by a $3\epsilon$-argument. Note that this is the place where we use the tracial property of $E$, so that we have the `right' estimate $\|bc\|_{B_E}\leq \|c\|_\infty \|b\|_{B_E}$ for $b, c \in B$.
\end{proof}

Using the facts from previous sections we get the following far reaching generalisation of  \cite[Theorem 3.6]{dong_ruan},  one of the main results of our paper.
\begin{thm}\label{FH}
Let $B:=C^*_r(\A,\G,\Sigma, \alpha)$ be the reduced crossed product of the twisted action 
 $(\A,\G,\Sigma,\alpha)$ of an \'etale locally compact Hausdorff groupoid $\G$ on 
the $C^*$-algebra $A:=C_0(\A)$, and let   $E:B \to A$ be the canonical  conditional expectation.
Then  $B$ has the $E$-Haagerup  property if and only if $(\A,\G,\Sigma,\alpha)$ has the  Haagerup  property.
\end{thm}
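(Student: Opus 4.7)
The plan is to derive the two implications from the correspondences $h\leftrightarrow m_h$ (Proposition \ref{prop:pdCrossed}) and $\Phi \mapsto h^\Phi$ (Proposition \ref{prop:Haagerup Trick}), invoking Proposition \ref{prop:Hilbert multipliers_fell_groupoids} to convert between local $C_0$ behaviour of sections and local compactness of operators on $B_E$, and Lemma \ref{lem:convergence_multipliers} to handle the approximation conditions.

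For the implication ($\Leftarrow$), given a net $(h_i)_{i\in \Ind}$ witnessing the Haagerup property of $(\A,\G,\Sigma,\alpha)$, set $\Phi_i := m_{h_i}$. Proposition \ref{prop:pdCrossed} ensures $\Phi_i$ is completely positive, a $C_0(\A)$-bimodule map, and extends to $\widetilde{m}_{h_i}\in \L(B_E)$ of norm at most $\|h_i\|_\infty = 1$, while the normalisation $h_i|_{\Gz}=1$ gives $\Phi_i|_A = \mathrm{id}_A$. For the local compactness, fix $k\in K(A) = C_c(\A)$ with $K:=\supp k$; the bimodule property yields $k \widetilde{m}_{h_i} k = \widetilde{m}_{h_i}\circ L_k R_k$, where $L_k,R_k$ denote left/right multiplication by $k$ on $B_E$. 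Since $\G_K^K$ is closed in $\G$ and $h_i|_{\G_K^K}\in C_0(\A|_K *_r \G_K^K)$, one approximates $h_i|_{\G_K^K}$ uniformly by $h'\in C_c(\A *_r \G|_{\G_K^K})$ extended by zero to $\G$; then $\widetilde{m}_{h'}\in \mathcal{F}(C_c(\B))$ by Proposition \ref{prop:Hilbert multipliers_fell_groupoids}(\ref{enu:multipliers_fell_groupoids1}), hence $\widetilde{m}_{h'}\circ L_kR_k\in \K(B_E)$, and a direct pointwise estimate (using that $L_kR_k b$ vanishes outside $\G_K^K$) gives $\|(\widetilde{m}_{h_i-h'})\circ L_kR_k\|_{\L(B_E)}\leq \|h_i-h'\|_\infty\,\|L_kR_k\|$, forcing $k\widetilde{m}_{h_i}k\in \K(B_E)$. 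Finally, $\|\Phi_i(b)-b\|_E\to 0$ for $b\in B$ follows from Lemma \ref{lem:convergence_multipliers} together with a standard density argument that uses the uniform operator-norm bound.

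For the converse ($\Rightarrow$), given a net $(\Phi_i)_{i\in \Ind}$ realising the $E$-Haagerup property, set $h_i:=h^{\Phi_i}$ via the Haagerup Trick; by Proposition \ref{prop:Haagerup Trick} these are continuous positive-definite sections bounded by $\|\Phi_i\|=1$. For normalisation, at $x\in \Gz$ one can take $f_x\in C_c(\A|_U)\subseteq A$ for $U\subseteq \Gz$, so $\Phi_i(f_x)=f_x$ (by $\Phi_i|_A=\mathrm{id}_A$) and $h_i(x)=f_x(x) f_x(x)^* = 1_x$. For the locally $C_0$ property, given compact $K\subseteq \Gz$, use continuity of the unit section and local compactness of $\Gz$ to choose $k\in C_c(\A)$ equal to the unit section on an open neighbourhood $V$ of $K$. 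Set $T_i:=k\widetilde{\Phi}_ik \in \K(B_E)$ and, for each $\gamma\in \G_K^K$, choose $f_\gamma$ supported on a bisection $U\subseteq \G_V^V$; then $kf_\gamma=f_\gamma=f_\gamma k$, so the $A$-bimodule property of $\Phi_i$ (Remark \ref{rem:Haagerup_for_inclusions}) gives $\Phi_i(f_\gamma)=\Phi_i(kf_\gamma k)=k\Phi_i(f_\gamma)k=T_i(f_\gamma)$. This yields the crucial identity
\[
h_i(\gamma) = T_i(f_\gamma)(\gamma)\,f_\gamma(\gamma)^{*} \qquad (\gamma\in \G_K^K),
\]
which allows a direct adaptation of the proof of Proposition \ref{prop:Hilbert multipliers_fell_groupoids}(\ref{enu:multipliers_fell_groupoids4}) (applied with $T_i$ in place of $\widetilde{\Phi}$) to show that $\{\gamma\in \G_K^K : \|h_i(\gamma)\|\geq \varepsilon\}$ is contained in a compact subset of $\G$, and therefore $h_i|_{\G_K^K}\in C_0(\A|_K*_r\G_K^K)$. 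Uniform convergence of $h_i$ to the unit section on compact subsets of $\G$ is provided directly by the last statement of Lemma \ref{lem:convergence_multipliers}.

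The main technical obstacle lies in bridging operator-level compactness on $B_E$ with the fibrewise local $C_0$ behaviour of sections in the presence of non-central elements of $A$ and the twist. Indeed, for $k \in K(A)$ which is not central the map $b\mapsto k\Phi_i(b)k$ fails to be a right $A$-module map on $B$, so the Haagerup Trick does not apply to it directly; dually, $k\widetilde{m}_{h_i}k$ cannot be written as a scalar $\A*_r\G$-multiplier $\widetilde{m}_g$ for any single section $g$. Both obstructions are bypassed by working entirely on $B_E$ through the bimodule commutation $k\widetilde{m}_{h_i}k = \widetilde{m}_{h_i}\circ L_kR_k$ in the forward direction and the pointwise identity $h^{\Phi_i}(\gamma) = T_i(f_\gamma)(\gamma)f_\gamma(\gamma)^*$ on $\G_K^K$ in the backward direction; both mechanisms depend only on the $A$-bimodule property of the maps involved and on the continuous unit section assumption, which guarantees that $k$ can be chosen to equal the unit on a prescribed open neighbourhood of $K$.
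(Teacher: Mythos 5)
Your proof is correct and follows essentially the same route as the paper's: the forward direction via $\Phi_i=m_{h_i}$ together with Propositions \ref{prop:pdCrossed}, \ref{prop:Hilbert multipliers_fell_groupoids} and Lemma \ref{lem:convergence_multipliers}, and the converse via the Haagerup trick and the same two results. The one step that needs repair is the local-compactness argument in the forward direction: a section in $C_c(\A*_r\G|_{\G_K^K})$ extended by zero to $\G$ need not be continuous (the set $\G_K^K$ is closed but not open), so your approximant $h'$ is not literally an element of $C_c(\A*_r\G)$ and Proposition \ref{prop:Hilbert multipliers_fell_groupoids}\,(\ref{enu:multipliers_fell_groupoids1}) does not apply to it as written. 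The fix is immediate, and is what the paper actually does: set $\widetilde h_i(\gamma):=h(r(\gamma))h_i(\gamma)h(s(\gamma))$ for a scalar cutoff $h\in C_c(\Gz)$ equal to $1$ on $\supp k$; this is a continuous positive-definite section lying in $C_0(\A*_r\G)$ by the locally $C_0$ hypothesis, so $\widetilde{m}_{\widetilde h_i}\in\K(B_E)$ by part (\ref{enu:multipliers_fell_groupoids2}) of that proposition and $k\Phi_ik=k\widetilde{m}_{\widetilde h_i}k$, with no approximation needed. Your treatment of the converse --- establishing the identity $h^{\Phi_i}(\gamma)=T_i(f_\gamma)(\gamma)f_\gamma(\gamma)^*$ on $\G_K^K$ and rerunning the argument of Proposition \ref{prop:Hilbert multipliers_fell_groupoids}\,(\ref{enu:multipliers_fell_groupoids4}) --- is a legitimate variant of the paper's, which instead takes $k$ central (a scalar cutoff times the unit section), uses $h^{k\Phi_ik}=(h\circ r)\,h^{\Phi_i}\,(h\circ s)$, and applies that proposition as a black box; the two mechanisms are interchangeable here.
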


\begin{proof}
	Suppose that $(\A,\G,\Sigma,\alpha)$ has the Haagerup  property. Let $(h_i)_{i \in \Ind}$ be a net as in Definition \ref{def:Haagerup_for_twisted_actions}.
	By Proposition \ref{prop:positive-defnite_vs_completely_positive}
		we have the corresponding completely positive contractive multiplier  maps 
	\(m_{h_i}:C_r^*(\bu)\to C_r^*(\bu)\).
	By this proposition \(\Phi_i:=m_{h_i}\) is an  $A$-bimodule map.  
	By Definition \ref{def:Haagerup_for_twisted_actions} \eqref{item:Haagerup_for_twisted_actions1} 
	we have $\Phi_i|_A=id|_A$ and the form of $E$ implies that
	$E\circ \Phi_i=E$. Hence $(\widetilde{\Phi}_i)_{i \in \Ind}$ is uniformly bounded by $1$, and 
	therefore 	it 
converges pointwise to the identity operator on $B_E$, by Lemma \ref{lem:convergence_multipliers}.
	Finally, let $k\in K(A)=C_c(\A)$. Take any positive function $h\in C_c(\Gz)$ 
	which is equal to $1$ on the support of $k$. Then $\widetilde{h}_i(\gamma):=h(r(\gamma))h_i(\gamma)h(s(\gamma))$, $\gamma\in \G$, 
	is a positive-definite section of $\A*_r\G$. 
Denote the support of $h$ by $K$. Since $\widetilde{h}_i$ vanishes outside $\G_{K}^{K}$
 and $h_i|_{K}^{K}\in  C_0(\A|_{K}*_r\G^{K}_{K})$	(by Definition \ref{def:Haagerup_for_twisted_actions} \eqref{item:Haagerup_for_twisted_actions2}) 
	we get $\widetilde{h}_i\in C_0(\A*_r\G)$. Therefore   \(\widetilde{m}_{\widetilde{h}_i }\in \mathcal{K}(B_E)\)
	by Proposition \ref{prop:Hilbert multipliers_fell_groupoids} \eqref{enu:multipliers_fell_groupoids2}. 
	Hence $k\Phi_i k=k \widetilde{m}_{\widetilde{h}_i }k\in \mathcal{K}(B_E)$.

	Now let $(\Phi_i)_{i \in \Ind}$ be any net as in Definition 	\ref{defn:E-Haagerup}. Let $h^{\Phi_{i}}$, $i \in \Ind$, be given by 
	Proposition \ref{prop:Haagerup Trick}. The formula  \eqref{eq:Haagerup_multiplier} and the equality  $\Phi_i|_A=id|_A$ imply that  $h^{\Phi_{i}}|_{\Gz}=1_{\Gz}$. 
	By Lemma \ref{lem:convergence_multipliers},  $(h^{\Phi_{i}})_{i \in \Ind}$
	converges to the unit section uniformly on compact sets. To see that 
		$h_i|_{\G^K_K} \in  C_0(\A|_K*_r\G^K_K)$, for any   compact set $K\subseteq \Gz$, fix such a $K$ and
	take a positive function $h\in C_c(\Gz)$ equal to $1$ on $K$. Multiplying it by the unit section of $\A$ 
	we get  an element $k\in K(A)$. 
	It  follows  from \eqref{eq:Haagerup_multiplier} that $h^{k\Phi_ik}(\gamma)=h(r(\gamma))h^{\Phi_i}(\gamma)h(s(\gamma))$, $\gamma \in \G$.
	Since $k\Phi_ik\in \K(B_E)$, we get $h^{k\Phi_ik}\in C_0(\A*_r\Gz)$ by Proposition
	\ref{prop:Hilbert multipliers_fell_groupoids} \eqref{enu:multipliers_fell_groupoids4}.
	Therefore $h^{\Phi_i}|_{\G^K_K}=h^{k\Phi_ik}|_{\G^K_K}\in C_0(\A|_K*_r\G^K_K)$.
	This verifies that $(\A,\G,\Sigma,\alpha)$ has the Haagerup property.
\end{proof}
\begin{rem}\label{rem:on_sub_E_condition}
It follows from the proof above that $B:=C^*_r(\A,\G,\Sigma, \alpha)$ has the $E$-Haagerup  property if and only if we can find a net $(\Phi_i)_{i \in \Ind}$
as in Definition \ref{defn:E-Haagerup} with the additional property that $E\circ \Phi_i=E$ for each $i \in \Ind$.
\end{rem}

Theorem \ref{FH}, apart from generalising the main result of \cite{dong_ruan}, has an immediate consequence for the relative Haagerup property with respect to a Cartan subalgebra.

\begin{cor}\label{cor:Cartan_Haagerup}
Let  $A \subseteq B$ be a (commutative) Cartan subalgebra of a $C^*$-algebra $B$. Then $A\subseteq B$ has the 
Haagerup  property if and only if
the associated Weyl groupoid has the Haagerup   property. 
\end{cor}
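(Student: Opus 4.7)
The plan is to combine Renault's theorem with Theorem \ref{FH} and Lemma \ref{lem:Haagerup for twisted groupoids}, via the Weyl groupoid construction. Since $A\subseteq B$ is a Cartan subalgebra, Renault's theorem gives a canonical twisted groupoid $(\G,\Sigma)$ --- the Weyl twist of $(B,A)$ --- where $\G$ is an \'etale, locally compact, Hausdorff and topologically principal groupoid with unit space $\Gz$ such that $A\cong C_0(\Gz)$, and an isomorphism $B\cong C^*_r(\G,\Sigma)$ intertwining the inclusion of $A$ with the canonical inclusion $C_0(\Gz)\subseteq C^*_r(\G,\Sigma)$. Moreover, under this identification, the unique conditional expectation $E:B\to A$ coincides with the canonical conditional expectation coming from the restriction of sections to $\Gz$; existence and uniqueness of this expectation was already recorded for noncommutative Cartan subalgebras in the remark preceding this corollary.

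First I would view $(\G,\Sigma)$ as a trivial twisted groupoid $C^*$-dynamical system in the sense of Definition \ref{def:twisted_groupoid_action}, namely $(\A,\G,\Sigma,\alpha)$ with $\A=\Gz\times\C$, all $\alpha_\sigma$ equal to the identity on $\C$, and $u_x\equiv 1$. The $C^*$-bundle $\A=\Gz\times \C$ trivially admits a continuous unit section, so the framework of Section \ref{Sect:HTrick} and Theorem \ref{FH} applies. Under the isomorphism of Example \ref{ex:bundle_from_twisted_action} and Theorem \ref{thm:twisted_action_groupoid_vs_inverse_semigroup}, the reduced crossed product $C^*_r(\A,\G,\Sigma,\alpha)$ is canonically identified with $C^*_r(\G,\Sigma)\cong B$, with the canonical expectation matching $E$.

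Next I would apply Theorem \ref{FH} to deduce that $B$ has the $E$-Haagerup property if and only if the twisted dynamical system $(\A,\G,\Sigma,\alpha)$ has the Haagerup property in the sense of Definition \ref{def:Haagerup_for_twisted_actions}. Then Lemma \ref{lem:Haagerup for twisted groupoids} identifies this with the Haagerup property of the groupoid $\G$ itself, i.e.\ existence of a net as in Proposition \ref{Haagequivgeneral}\eqref{enu:Haagequivgeneral2}, which is precisely the Haagerup property of the Weyl groupoid $\G$ as stated in the corollary.

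None of the steps present a substantial obstacle, since all heavy lifting is done by Theorem \ref{FH} and Renault's reconstruction theorem; the main care is simply bookkeeping: verifying that the various canonical conditional expectations all coincide and that Renault's isomorphism intertwines $A\subseteq B$ with $C_0(\Gz)\subseteq C^*_r(\G,\Sigma)$, so that the Haagerup property of the inclusion $A\subseteq B$ (which is well-defined by uniqueness of $E$) genuinely matches the $E$-Haagerup property to which Theorem \ref{FH} applies.
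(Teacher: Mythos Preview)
Your proposal is correct and follows essentially the same route as the paper: invoke Renault's reconstruction theorem to identify $(B,A)$ with $(C^*_r(\G,\Sigma),C_0(\Gz))$, then combine Theorem \ref{FH} with Lemma \ref{lem:Haagerup for twisted groupoids}. The only point the paper adds is a remark that the identification $B\cong C^*_r(\G,\Sigma)$ holds even without assuming $B$ separable, citing \cite{BartoszRalf3} and \cite{Raad} for the general case; otherwise your argument and the paper's coincide.
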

\begin{proof}
We may assume identifications $A=C_0(\Gz)$ and $B=C_r^*(\G,\Sigma)$ for a twisted groupoid $(\G,\Sigma)$ (this is well known when $B$ is separable \cite{Re} but holds in general, see \cite{BartoszRalf3} or \cite{Raad}). The twisted groupoid $(\G,\Sigma)$  is uniquely determined by the
inclusion $A\subseteq B$, and $\G$ is called the Weyl groupoid of $A\subseteq B$. Thus it suffices to combine Theorem \ref{FH} and Lemma \ref{lem:Haagerup for twisted groupoids}.
\end{proof}

As another immediate consequence, we obtain a generalisation of \cite[Theorem~4.2]{dong_ruan}  from countable groups to metric spaces with bounded geometry.
\begin{cor}
	Let $X$ be a metric space with  bounded geometry and $\G(X)$ be its coarse groupoid. Then $X$ coarsely embeds into a Hilbert space if and only if  there is a  net $(h_i)_{i \in \Ind}\subseteq  C_0(\G(X))$ of  positive-definite functions,  converging to $1$ uniformly on 
	compact subsets of $\G(X)$ if and only  if  for all (equivalently, some) twist $\Sigma$ over $\G(X)$, the $C^*$-inclusion  $C(\G(X)^{(0)})\subseteq C_r^*(\G(X), \Sigma)$ has the Haagerup  property.
\end{cor}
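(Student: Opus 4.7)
The plan is to chain together several results from the paper with the classical characterisation of coarse embeddability due to Skandalis, Tu and Yu. First I would recall that the coarse groupoid $\G(X)$ of a bounded geometry metric space is \'etale, $\sigma$-compact, locally compact and Hausdorff, with compact and totally disconnected unit space $\G(X)^{(0)}=\beta X$; in particular $\beta X$ is a union of compact open sets, so $\G(X)$ falls under the full three-way equivalence of Proposition~\ref{Haagequivgeneral}.

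For the equivalence between coarse embeddability and the existence of the net in $C_0(\G(X))$, I would invoke the Skandalis--Tu--Yu theorem: $X$ coarsely embeds into a Hilbert space if and only if $\G(X)$ acts properly on a continuous field of affine Euclidean spaces. By Remark~\ref{rem:Haagerup_affine_actions} (quoting \cite[Proposition 3.8]{Tu}), this is in turn equivalent to $\G(X)$ admitting a locally proper negative type function in the sense of Definition~\ref{def:Tu}, which by Proposition~\ref{Haagequivgeneral} is equivalent to the existence of a net $(h_i)_{i\in\Ind}\subseteq C_0(\G(X))$ of positive-definite functions converging to $1$ uniformly on compact subsets.

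For the equivalence with the $C^*$-algebraic condition, I would use the standard fact that $\G(X)$ is topologically principal (effective), so that for every twist $\Sigma$ over $\G(X)$ the inclusion $C(\beta X)\subseteq C_r^*(\G(X),\Sigma)$ is a Cartan pair with unique canonical conditional expectation and Weyl groupoid equal to $(\G(X),\Sigma)$. By Corollary~\ref{cor:Cartan_Haagerup}, this inclusion has the Haagerup property if and only if $(\G(X),\Sigma)$ does in the sense of Definition~\ref{def:Haagerup_for_twisted_actions}; and by Lemma~\ref{lem:Haagerup for twisted groupoids} the latter is equivalent to $\G(X)$ having the Haagerup property, independently of $\Sigma$. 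This collapses the ``for all/for some'' dichotomy. Finally, since $\beta X$ is compact, Proposition~\ref{prop:unital_untwisted_Haagerups} applied to the trivial one-dimensional bundle identifies the Haagerup property of $\G(X)$ precisely with the second condition of the corollary.

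The only genuinely external input is the Skandalis--Tu--Yu characterisation of coarse embeddability in terms of proper affine actions of $\G(X)$; I expect this to be the main non-formal step, together with checking that $\G(X)$ is topologically principal so that the Cartan framework of Corollary~\ref{cor:Cartan_Haagerup} applies. All remaining implications are immediate from the structural results already established in the paper.
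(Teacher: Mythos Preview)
Your proposal is correct and follows essentially the same route as the paper: invoke the structural properties of $\G(X)$ from \cite{STY} (\'etale, $\sigma$-compact, compact unit space, principal), use the Skandalis--Tu--Yu characterisation of coarse embeddability, and then combine Proposition~\ref{Haagequivgeneral}, Lemma~\ref{lem:Haagerup for twisted groupoids} and Corollary~\ref{cor:Cartan_Haagerup}. The only cosmetic differences are that the paper cites \cite[Theorem 5.4]{STY} directly for the equivalence of coarse embeddability with the existence of a (locally) proper negative type function, rather than passing through proper affine actions and Remark~\ref{rem:Haagerup_affine_actions}, and that your detour through Proposition~\ref{prop:unital_untwisted_Haagerups} is not needed since Proposition~\ref{Haagequivgeneral} already handles the identification of the $C_0$-net condition with the Haagerup property once $\Gz$ is compact.
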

\begin{proof}
	By \cite[Proposition 3.2]{STY}, $\G(X)$ is a locally
compact Hausdorff, principal \'etale groupoid. Moreover, $\G(X)$ is $\sigma$-compact and has a compact unit space. 
By \cite[Theorem 5.4]{STY}, $X$ coarsely embeds into a Hilbert space if and only if 
$G(X)$ admits a proper negative type function. Hence the assertion follows from Proposition \ref{Haagequivgeneral}, Lemma \ref{lem:Haagerup for twisted groupoids} and  Corollary~\ref{cor:Cartan_Haagerup}.
\end{proof}

\begin{rem} 
Recall that the coarse groupoid $G(X)$ of a bounded geometry metric space $X$ is amenable if and only if $X$ has property A if and only if $C_r^*(\G(X), \Sigma)$ is nuclear for all (equivalently, some) twist $\Sigma$ over $\G(X)$ (see \cite[Theorem~5.3]{STY} and \cite[Theorem 5.4]{Takeishi}). On the other hand, there exist metric spaces with bounded geometry which do not have property A but coarsely embed into a Hilbert space (see e.g. \cite{AGS, Osaj14}).

\end{rem}

\section{The Haagerup property and the Universal Coefficient Theorem} \label{Sect:UCT}

 In this section we discuss the consequences of the results of the last section for the questions related to the Universal Coefficient Theorem. We  recall that the Universal Coefficient Theorem (UCT) for $C^*$-algebras was introduced by Rosenberg and Schochet in \cite{RS87}. A separable $C^*$-algebra $A$ is said to satisfy the UCT if for every separable $C^*$-algebra $B$ the following natural sequence
$$
0\rightarrow \text{Ext}(K_*(A),K_{*-1}(B)) \rightarrow KK_*(A,B)\rightarrow \text{Hom}(K_*(A),K_*(B))\rightarrow 0
$$
is exact.  A separable $C^*$-algebra satisfies the UCT if and only if it is $KK$-equivalent to a commutative $C^*$-algebra (see \cite[Theorem~23.10.5]{B}). 
Although there exist exact non-nuclear $C^*$-algebras that do not satisfy the UCT (see \cite{S})\footnote{It is also worth noting that there exist separable non-exact $C^*$-algebras which satisfy the UCT. Indeed, this is the case for the reduced and full group $C^*$-algebras of $\Gamma$, where $\Gamma$ is a finitely generated non-exact group with the Haagerup property (as exhibited in \cite[Theorem~2]{Osaj14}).}, it is still open whether all separable nuclear $C^*$-algebras satisfy the UCT. This question is often refereed to as the UCT problem, and it is receiving renewed interest due to the recent breakthrough results in the classification program of separable simple nuclear $C^*$-algebras satisfying the UCT (see e.g.\ \cite{Win,Win2}).

It follows from Tu's remarkable paper \cite{Tu} that all $C^*$-algebras associated to groupoids with the Haagerup property satisfy the UCT. 
Building on Tu's techniques, Barlak and Li have proved that this also holds for twisted amenable \'etale groupoids (see \cite{BL}), using recent articles of Takeishi \cite{Takeishi} and of van Erp and Williams \cite{EW}. In this section, we generalise the key results of \cite{BL} concerning UCT, following the same idea. But instead of \cite{EW} we exploit  the Stabilization Theorem \cite{stabilization} of Ionescu, Kumjian, Sims and  Williams, and instead of amenability we assume the Haagerup property. This enables us to obtain  a stronger assertion under weaker assumptions.

 Recall that a Fell bundle is said to be \emph{separable }if each of its fibers is separable.
\begin{thm}\label{uct for crossed product}
Let $\bu$ be a continuous saturated and separable Fell bundle over a second countable locally compact Hausdorff \'etale groupoid $\G$. Assume 
that  $C^*(\bu|_{\Gz})$ is of type I and $\G$ has the Haagerup  property. Then both $C^*_r(\bu)$ and  $C^*(\bu)$ satisfy the UCT.

\end{thm}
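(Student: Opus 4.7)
My plan is to reduce the problem, via the Stabilization Theorem of Ionescu-Kumjian-Sims-Williams \cite{stabilization}, to the case of a twisted groupoid $C^*$-algebra, and then invoke Tu's result from \cite{Tu} that both the reduced and full $C^*$-algebras of a second countable locally compact Hausdorff \'etale groupoid with the Haagerup property (and of its twists) satisfy the UCT. This parallels the strategy of \cite{BL}, with amenability replaced by the Haagerup property and the van Erp-Williams input replaced by \cite{stabilization}, which offers more flexibility at the cost of a type I hypothesis on $C^*(\bu|_{\Gz})$ -- exactly the hypothesis appearing in our statement.

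First I would apply the Stabilization Theorem of \cite{stabilization} to produce a second countable locally compact Hausdorff \'etale groupoid $\H$, equivalent to $\G$ as a topological groupoid, together with a twist $\Sigma$ of $\H$ by the trivial circle bundle, such that $\bu$ is Morita equivalent (as a Fell bundle) to the continuous Fell line bundle over $\H$ associated with $\Sigma$. The separability, continuity, saturation, and type I assumptions on $\bu$ are precisely what is required for the theorem to deliver this form. The equivalence of Fell bundles then passes to $C^*$-algebras, yielding strong Morita equivalences $C^*_r(\bu) \sim_M C^*_r(\H,\Sigma)$ and $C^*(\bu) \sim_M C^*(\H,\Sigma)$.

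Next, the Haagerup property is invariant under groupoid equivalence in the second countable \'etale Hausdorff setting -- this can be seen by transferring a locally proper function of negative type (Definition \ref{def:Tu}) across the linking groupoid and using the reformulation in Proposition \ref{Haagequivgeneral}. Hence $\H$ has the Haagerup property, and by Lemma \ref{lem:Haagerup for twisted groupoids} the twisted groupoid $(\H,\Sigma)$ has the Haagerup property as a twisted $C^*$-dynamical system on $C_0(\H^{(0)})$. Now Tu's theorem applies to $(\H,\Sigma)$ and delivers UCT for both $C^*_r(\H,\Sigma)$ and $C^*(\H,\Sigma)$; Tu's argument proceeds via a Dirac-dual-Dirac construction yielding a $KK$-equivalence with a commutative $C^*$-algebra, which handles the full and reduced versions uniformly. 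Since UCT passes through strong Morita equivalence of separable $C^*$-algebras (both $\bu$'s $C^*$-algebras are separable by our hypotheses), we conclude that $C^*_r(\bu)$ and $C^*(\bu)$ satisfy the UCT.

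The principal difficulty is invoking the Stabilization Theorem in the precise form sketched above: one must verify that, under our hypotheses, the Morita equivalent model is genuinely a line-bundle twist (as opposed to a Fell bundle with more general fibers coming from an extension by a bundle of compact operators), and that this twist lives over a groupoid which remains second countable locally compact Hausdorff and \'etale, so that Tu's machinery applies. A secondary point worth spelling out is the invariance of the Haagerup property under groupoid equivalence -- implicit in \cite{Tu} but deserving a careful citation or short proof since we no longer have amenability to fall back on. Once these pieces are in place, the proof of Theorem \ref{uct for crossed product} is a short composition of the three ingredients: stabilization, Tu's UCT theorem, and Morita invariance of the UCT.
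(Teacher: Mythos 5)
There is a genuine gap, and it sits exactly where you flagged the ``principal difficulty''. The Stabilization Theorem of Ionescu--Kumjian--Sims--Williams does \emph{not} produce a line-bundle twist over a groupoid equivalent to $\G$: it states that a separable saturated Fell bundle $\bu$ over $\G$ is equivalent to the Fell bundle of an \emph{untwisted} groupoid dynamical system $(\A,\G,\alpha)$ over the \emph{same} groupoid $\G$, where $\A$ is a $C^*$-bundle whose fibres are in general large (stabilized) $C^*$-algebras. The hypothesis that $C^*(\bu|_{\Gz})$ is type I does not collapse these fibres to $\C$; it only guarantees that $A=C_0(\A)$ is type I (being Morita equivalent to $C^*(\bu|_{\Gz})$). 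So your reduction to a continuous Fell line bundle over some $\H\sim\G$ is unavailable, and with it the need to transfer the Haagerup property across a groupoid equivalence disappears (the groupoid never changes).

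The second problem is that even if one had a twisted-groupoid model $(\H,\Sigma)$, Tu's paper does not directly yield the UCT for twisted groupoid $C^*$-algebras; that extension is precisely what Barlak--Li proved (via van Erp--Williams, under a continuous-trace hypothesis), and in the present paper it appears as Proposition \ref{uct for twisted}, which is \emph{deduced from} Theorem \ref{uct for crossed product} by viewing a twist as a Fell line bundle. Your route therefore either inverts the logical order or silently assumes the twisted case. The correct argument is: stabilize to $(\A,\G,\alpha)$ with $A$ separable, continuous and type I; use Renault's equivalence theorem to get Morita equivalences of the full and reduced section algebras; invoke Tu's Proposition~4.12 and Theorem~9.3 to see that $C^*(\A,\G,\alpha)$ and $C^*_r(\A,\G,\alpha)$ are $KK$-equivalent (so it suffices to treat the reduced one); use Proposition \ref{Haagequivgeneral} and Remark \ref{rem:Haagerup_affine_actions} to produce a proper action of $\G$ on a continuous field of affine Euclidean spaces from the Haagerup property; and then run Tu's Dirac--dual-Dirac machinery on the untwisted crossed product with type I coefficients, exactly as in the proof of Theorem~3.1 of Barlak--Li.
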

\begin{proof}
We claim that we may reduce  the situation 
to the case where $\bu$
is the Fell bundle associated to a  groupoid dynamical system  $(\A,\G,\alpha)$, using the Stabilization Theorem \cite[Theorem 3.7]{stabilization}, see also  \cite{Lalonde}. 
Indeed, the Stabilization Theorem states that $\bu$ is Morita equivalent to a  groupoid $C^*$-dynamical system $(\A,\G,\alpha)$.
Then $A=C^*(\A)$ is Morita equivalent to $C^*(\bu|_{\Gz})$ and hence is of type I. Since $\B$ is separable and continuous,
$\A$ is separable and continuous as well (this is not stated explicitly in  \cite{stabilization} but it
follows immediately from the construction of the corresponding groupoid dynamical system).
By  Renault's equivalence theorems for Fell bundles, see  \cite[Theorem 14]{SimsWilliams}, 
and \cite[Theorem 6.4]{MW08},  $C^*_r(\bu)$ (resp. $C^*(\bu)$) is Morita equivalent to $C^*_r(\A, \G, \alpha)$ (resp. $C^*(\A, \G, \alpha)$), 
cf.\ also \cite{Lalonde}. Since the UCT is stable under Morita equivalence this shows our claim. 

Moreover, it follows from \cite[Proposition~4.12 and Theorem~9.3]{Tu} that $C^*(\A, \G, \alpha)$ and $C^*_r(\A, \G, \alpha)$ are $KK$-equivalent. Since the UCT is stable under $KK$-equivalences \cite[Theorem~23.10.5]{B}, it suffices to show that $C^*_r(\A, \G, \alpha)$
has the UCT.

So let us consider  a groupoid dynamical system $(\A,\G,\alpha)$ such that $\G$ is a second countable locally compact Hausdorff \'etale groupoid, $\A$ is a continuous bundle and  $A:=C_0(\A)$ is separable and type I.
 By Proposition \ref{Haagequivgeneral} and Remark \ref{rem:Haagerup_affine_actions} (which is essentially \cite[Proposition 3.8]{Tu}), $\G$ acts properly on a continuous field $H$ of affine Euclidean spaces. From this point on one can continue as in the proof of  
\cite[Theorem 3.1]{BL}. 
\end{proof}

In the next few results 
we consider 
twists for an action $\alpha$ of a \emph{discrete group} $\Gamma$ on a $C^*$-algebra $A$, associated to cocycles $\omega: \Gamma \times \Gamma \to UM(A)$. We refer to \cite{PackerRaeburn} for relevant definitions, and denote the respective reduced/universal crossed products by $A\rtimes_{\alpha,r}^\omega \Gamma$ and $A\rtimes_{\alpha}^\omega \Gamma$. Similarly, we write $C^*_r(\Gamma, \omega)$ and $C^*(\Gamma, \omega)$ for twisted reduced/universal group $C^*$-algebras (which  are crossed products by an action of $\Gamma$ on $\C$). In fact, the twisted group actions are particular examples of twisted inverse semigroup actions, as described in Definition \ref{defn:twisted_inverse_semigroup_action}, and the corresponding $C^*$-algebras coincide, cf. page 
\pageref{page:twisted_crossed_products}. In particular, twisted group actions can be viewed as saturated Fell bundles over groups. 
Hence Theorem~\ref{uct for crossed product} gives immediately the following generalisation of \cite[Proposition~6.1]{ELPW}.
\begin{cor}\label{typI+H}
Let $(A,\Gamma,\alpha,\omega)$ be a twisted $C^*$-dynamical system such that the $C^*$-algebra $A$ is separable and type I,  and $\Gamma$ is a countable discrete group with the Haagerup property. Then $A\rtimes_{\alpha,r}^\omega \Gamma$ and $A\rtimes_{\alpha}^\omega \Gamma$ satisfy the UCT. 
\end{cor}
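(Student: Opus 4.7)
The plan is to reduce the corollary to a direct application of Theorem \ref{uct for crossed product}, viewing the twisted group action as a Fell bundle over the discrete groupoid $\Gamma$.

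First, I would regard the countable discrete group $\Gamma$ as a second countable locally compact Hausdorff \'etale groupoid, whose unit space $\Gamma^{(0)} = \{e\}$ is just a point. The twisted dynamical system $(A,\Gamma,\alpha,\omega)$ determines a saturated Fell bundle $\bu = \bigsqcup_{g\in\Gamma} B_g$ over $\Gamma$ in the standard way: each fiber $B_g$ is a copy of $A$ viewed as a Hilbert $A$-bimodule via $\alpha_g$, the multiplication $B_g \times B_h \to B_{gh}$ is given (via the identification with $A$) by $(a,b) \mapsto a\,\alpha_g(b)\,\omega(g,h)$, and the involution by $B_g\ni a \mapsto \omega(g^{-1},g)^* \alpha_{g^{-1}}(a^*) \in B_{g^{-1}}$. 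This is the Fell bundle associated to the twisted action, cf.\ the discussion on page \pageref{page:twisted_crossed_products} which identifies twisted group actions with twisted inverse semigroup actions and hence with saturated Fell bundles over $\Gamma$.

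Next, I would verify each hypothesis of Theorem~\ref{uct for crossed product}. The groupoid $\Gamma$ is second countable, locally compact Hausdorff and \'etale since it is a countable discrete group. The bundle $\bu$ is continuous and indeed locally trivial, because $\Gamma$ is discrete so every point is a clopen singleton and each fiber is isolated; it is saturated by construction (each bimodule $B_g$ comes from an automorphism of $A$, so it is full) and separable since $A$ is separable. The restriction to the unit space gives $C^*(\bu|_{\{e\}}) = B_e = A$, which is of type I by hypothesis. Finally, $\Gamma$ itself has the Haagerup property by assumption, which is exactly the Haagerup property of $\Gamma$ viewed as a groupoid (in the sense used in Theorem \ref{uct for crossed product}).

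Having verified the hypotheses, Theorem~\ref{uct for crossed product} applies and yields that both $C^*_r(\bu)$ and $C^*(\bu)$ satisfy the UCT. It remains only to identify these algebras: by the inverse-semigroup realization recalled just before the statement of the corollary (see also Theorem \ref{thm:twisted_action_groupoid_vs_inverse_semigroup} specialized to the group case), we have natural isomorphisms $C^*_r(\bu) \cong A\rtimes_{\alpha,r}^{\omega}\Gamma$ and $C^*(\bu)\cong A\rtimes_{\alpha}^{\omega}\Gamma$. This completes the proof. There is no serious obstacle: the argument is a matter of matching definitions, the only point worth being explicit about is the identification of the Fell bundle with the twisted crossed product, and this is already established in the preceding sections.
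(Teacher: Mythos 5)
Your argument is correct and is essentially the paper's own: the paper likewise observes that a twisted group action is a twisted inverse semigroup action and hence a saturated Fell bundle over the (discrete, hence continuous and separable) groupoid $\Gamma$ with $C^*(\bu|_{\Gz})=A$ of type I, and then invokes Theorem~\ref{uct for crossed product} directly. Your write-up merely spells out the hypothesis checks that the paper leaves implicit.
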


We  list some more concrete applications.
\begin{cor}
Let $\Gamma$ be a countable discrete group. If $N$ is a virtually abelian normal subgroup of $\Gamma$ such that $\Gamma/N$ has the Haagerup property, then $C^*_{r}(\Gamma)$ and $C^*(\Gamma)$ satisfy the UCT.
\end{cor}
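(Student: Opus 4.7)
The plan is to decompose $C^*(\Gamma)$ and $C^*_r(\Gamma)$ as twisted crossed products of $C^*(N)$ (respectively $C^*_r(N)$) by $\Gamma/N$, and then apply Corollary~\ref{typI+H}. Concretely, from the group extension
\[ 1 \to N \to \Gamma \to \Gamma/N \to 1 \]
and a choice of a set-theoretic section $c\colon \Gamma/N \to \Gamma$ of the quotient map, the Packer--Raeburn construction \cite{PackerRaeburn} yields a twisted action $(\alpha,\omega)$ of $\Gamma/N$ on $C^*(N)$ (and on $C^*_r(N)$) together with canonical isomorphisms
\[ C^*(\Gamma) \cong C^*(N) \rtimes_{\alpha}^{\omega} (\Gamma/N), \qquad C^*_r(\Gamma) \cong C^*_r(N) \rtimes_{\alpha,r}^{\omega} (\Gamma/N). \]

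Next I would argue that $C^*(N)$ is separable and of type I. Separability follows because $N$ is countable, being a subgroup of the countable group $\Gamma$. For the type I assertion, since $N$ is virtually abelian it contains an abelian subgroup $N_0$ of finite index, and the intersection of the finitely many conjugates of $N_0$ in $N$ is an abelian normal subgroup of $N$ of finite index; by Thoma's characterisation of type I discrete groups, $C^*(N)$ is therefore of type I. Moreover $N$ is amenable, so $C^*(N) = C^*_r(N)$ and both decompositions above yield twisted crossed products of a separable type I $C^*$-algebra by the Haagerup group $\Gamma/N$.

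Applying Corollary~\ref{typI+H} to the twisted dynamical system $(C^*(N),\Gamma/N,\alpha,\omega)$ then gives the UCT for both $C^*(N)\rtimes_{\alpha}^{\omega}(\Gamma/N)$ and $C^*(N)\rtimes_{\alpha,r}^{\omega}(\Gamma/N)$, and hence for $C^*(\Gamma)$ and $C^*_r(\Gamma)$. The proof is essentially a bookkeeping exercise once the preceding theory is in place; the only point that requires some care is to invoke a version of the Packer--Raeburn decomposition that simultaneously handles the full and reduced $C^*$-algebras, but because $N$ is amenable the two pictures coincide and no subtleties arise. Thus no serious obstacle is expected.
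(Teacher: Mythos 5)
Your proposal is correct and follows essentially the same route as the paper: decompose $C^*_{(r)}(\Gamma)$ as a twisted crossed product of $C^*_{(r)}(N)$ by $\Gamma/N$ via Packer--Raeburn (with Bédos' theorem supplying the reduced version), note that $C^*_r(N)$ is type I since $N$ is virtually abelian, and apply Corollary~\ref{typI+H}. The extra details you supply (Thoma's characterisation for the type I claim, amenability of $N$ to identify the full and reduced algebras of $N$) merely flesh out steps the paper treats as well known.
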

\begin{proof}
It is well-known that $C^*_{r}(\Gamma)\cong C^*_{r}(N)\rtimes^{\omega}_{\alpha,r}(\Gamma/N)$ for some twisted action $(\alpha,\omega)$ of the quotient group $\Gamma/N$ on $C^*_{r}(N)$ (see e.g.\ \cite[Theorem~4.1]{PackerRaeburn} and \cite[Theorem~2.1]{Be}). Since $N$ is virtually abelian, $C^*_{r}(N)$ is type I and we complete the proof by the previous proposition. The same argument is valid in the universal case.
\end{proof}
\begin{cor}\label{twisted semidirect}
Suppose that $H, N$ are countable discrete groups such that  $H$ acts on $N$ by automorphisms and assume that $\omega:N\times N \rightarrow \mathbb{T}$ is a 2-cocycle invariant under the $H$-action (i.e.\ $\omega(h\cdot n,h\cdot m)=\omega(n,m)$ for all $n,m\in N$ and $h\in H$), so that we have  natural actions of $H$ on  
$C^*_{r}(N,\omega)$ and on $C^*(N,\omega)$.

If both $N$ and $H$ have the Haagerup property, then  both $C^*_{r}(N,\omega)\rtimes_{r}H$ and  $C^*(N,\omega)\rtimes H$ satisfy the UCT.
\end{cor}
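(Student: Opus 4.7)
The plan is to realise both $C^*_r(N,\omega)\rtimes_r H$ and $C^*(N,\omega)\rtimes H$ as twisted group $C^*$-algebras of the semidirect product $\Gamma:=N\rtimes H$, and then apply Corollary~\ref{typI+H} with $A=\C$.

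First, I use the $H$-invariance of $\omega$ to extend it to a $2$-cocycle $\widetilde\omega:\Gamma\times\Gamma\to\T$ on $\Gamma=N\rtimes H$. A natural candidate is
\[
\widetilde\omega\bigl((n_1,h_1),(n_2,h_2)\bigr):=\omega(n_1,h_1\cdot n_2),
\]
and one verifies directly -- using the cocycle identity for $\omega$ together with its $H$-invariance $\omega(h\cdot n,h\cdot m)=\omega(n,m)$ -- that $\widetilde\omega$ is a normalised $2$-cocycle on $\Gamma$.

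Second, I invoke a Packer--Raeburn type decomposition (see \cite{PackerRaeburn}) to obtain canonical isomorphisms
\[
C^*_r(N,\omega)\rtimes_{\alpha,r} H \;\cong\; C^*_r(N\rtimes H,\widetilde\omega),\qquad C^*(N,\omega)\rtimes_\alpha H \;\cong\; C^*(N\rtimes H,\widetilde\omega),
\]
where $\alpha$ denotes the natural $H$-action on the twisted group algebras induced by the $H$-action on $N$.

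Third, I invoke the Haagerup property of the semidirect product $N\rtimes H$ and apply Corollary~\ref{typI+H} with the type~I algebra $\C$ and $\Gamma=N\rtimes H$ (together with the cocycle $\widetilde\omega$). This yields the UCT for both $C^*_r(N\rtimes H,\widetilde\omega)$ and $C^*(N\rtimes H,\widetilde\omega)$, and hence for both $C^*_r(N,\omega)\rtimes_r H$ and $C^*(N,\omega)\rtimes H$.

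The main obstacle is the third step: establishing the Haagerup property for $N\rtimes H$ from the Haagerup properties of its factors $N$ and $H$ separately. This is not automatic for general semidirect products of Haagerup groups, and explicit counterexamples exist (such as $\Z^2\rtimes SL_2(\Z)$, which contains a pair with the relative property (T)); the verification will presumably exploit the $H$-invariance of the cocycle $\omega$ (or some implicit compatibility condition on the action of $H$ on $N$) in order to combine locally proper negative type functions on $N$ and $H$ into a locally proper negative type function on $N\rtimes H$, as in the sufficient criteria from \cite[Proposition~3.8]{Tu} used in the proof of Theorem~\ref{uct for crossed product}.
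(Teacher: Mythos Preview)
Your first two steps are correct: the cocycle $\widetilde\omega$ you define is indeed a $2$-cocycle on $N\rtimes H$, and the isomorphisms $C^*_r(N,\omega)\rtimes_r H\cong C^*_r(N\rtimes H,\widetilde\omega)$ (and the full version) hold; this is exactly \cite[Lemma~2.1]{ELPW}, used in the paper as well.

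The third step, however, is a genuine gap, and your own diagnosis is accurate while your proposed cure is not. The $H$-invariance of $\omega$ cannot help, because the obstruction is purely group-theoretic and has nothing to do with the cocycle: already for $\omega\equiv 1$ (trivially $H$-invariant) the semidirect product $\Z^2\rtimes SL_2(\Z)$ fails the Haagerup property, as you note. No ``compatibility condition'' on $\omega$ can repair the fact that $N\rtimes H$ simply need not be Haagerup.

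The paper circumvents this obstruction by a different mechanism. Rather than working with the full semidirect product, it uses the Haagerup property of $H$ in a $KK$-theoretic way: by the going-down principle of Meyer--Nest \cite[Corollary~9.4]{MN06} combined with Higson--Kasparov \cite{HK}, it suffices to prove the UCT for $C^*_r(N,\omega)\rtimes_r F$ for every \emph{finite} subgroup $F\leq H$. Only then is your isomorphism applied, giving $C^*_r(N,\omega)\rtimes_r F\cong C^*_r(N\rtimes F,\widetilde\omega)$. The point is that $N\rtimes F$ \emph{does} have the Haagerup property, since the Haagerup property passes to extensions by finite groups, and Corollary~\ref{typI+H} finishes the argument. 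So the Haagerup property of $H$ is used via Baum--Connes, not via permanence for semidirect products.
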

\begin{proof}
Since $H$ has the Haagerup property, $C^*_{r}(N,\omega)\rtimes H$ and $C^*_{r}(N,\omega)\rtimes_{r}H$ are $KK$-equivalent (as are $C^*(N,\omega)\rtimes H$ and $C^*(N,\omega)\rtimes_{r}H$). Hence, we only have to show the UCT for $C^*_{r}(N,\omega)\rtimes_{r}H$ (the proof for  $C^*(N,\omega)\rtimes_{r}H$ would be identical from this point). For this it suffices to show the UCT of $C^*_{r}(N,\omega)\rtimes_r F$ for every finite subgroup $F$ of $H$ (see \cite[Corollary~9.4]{MN06} and \cite{HK}). On the other hand, it follows from \cite[Lemma~2.1]{ELPW} that $$C^*_{r}(N,\omega)\rtimes_r F \cong C^*_{r}(N\rtimes F, \tilde{\omega}),$$
where $\tilde{\omega}$ is a 2-cocycle on $N\rtimes F$ given by $\tilde{\omega}((n,h),(n',h'))=\omega(n,h\cdot n')$, $h \in H, n, n' \in N$.

Since $N\rtimes F$ has the Haagerup property as well, $C^*_{r}(N\rtimes F, \tilde{\omega})$ satisfies the UCT for every finite subgroup $F$ by Corollary~\ref{typI+H}. This ends the proof.
\end{proof}
\begin{ex}
Let $\theta$ be an irrational number. We will identify $\theta$ with the real $2\times 2$ skew-symmetric matrix 
$\begin{pmatrix}
0 & -\theta \\
\theta & 0
\end{pmatrix}
$.
Define a 2-cocycle $\omega_\theta:\Z^2\times \Z^2\rightarrow \T$ by $\omega_\theta(x,y):=e^{- \pi i\langle\,\theta x,y\rangle}$, $x,y \in \Z^2$.  Then the reduced twisted group $C^*$-algebra $C_{r}^*(\Z^2,\omega_\theta)$ is known to be isomorphic to $A_\theta$, the irrational rotation algebra. Moreover, $\omega_\theta$ is invariant under the action of $SL(2,\Z)$ on $\Z^2$ via matrix multiplication (see \cite[Section~2]{ELPW} for details). From Corollary~\ref{twisted semidirect} we see that $A_\theta\rtimes_{r}SL(2,\Z)=C_{r}^*(\Z^2,\omega_\theta)\rtimes_{r}SL(2,\Z)$ satisfies the UCT (as does its universal counterpart). It is worth noticing that we can not apply Corollary~\ref{typI+H} directly here, as $A_\theta$ is not a type I $C^*$-algebra, and $A_\theta\rtimes_{r}SL(2,\Z)=C_{r}^*(\Z^2\rtimes SL(2,\Z),\tilde{\omega}_\theta)$ but the group $\Z^2\rtimes SL(2,\Z)$ does not have the Haagerup property.  
\end{ex}

We will now  combine Theorem \ref{uct for crossed product}  with the key results of Section 5. 
We start with a generalisation of  \cite[Theorem 1.1]{BL}.

\begin{prop}\label{uct for twisted}
	Assume that $(\G,\Sigma)$ is a twisted \'etale Hausdorff locally compact second countable groupoid. If $C_r^*(\G,\Sigma)$ has the $E$-Haagerup  property with respect to the canonical conditional expectation $E:C_r^*(\G,\Sigma)\rightarrow C_0(\G^{(0)})$ (which is automatic when $C_r^*(\G,\Sigma)$ is nuclear), then both $C_r^*(G,\Sigma)$  and $C^*(G,\Sigma)$ satisfy the UCT.
\end{prop}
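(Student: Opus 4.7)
The plan is to reduce the statement to Theorem \ref{uct for crossed product} applied to the continuous Fell line bundle $\L$ associated with the twist. Recall from Remark \ref{rem:line_bundles_twists} that every twist $\Sigma$ of $\G$ corresponds to a continuous Fell line bundle $\L=\bigsqcup_{\gamma\in \G}L_\gamma$ over $\G$, with $C^*_r(\L)\cong C^*_r(\G,\Sigma)$ and $C^*(\L)\cong C^*(\G,\Sigma)$. Every line bundle is saturated by definition, and since $\G$ is second countable and second countability of $\L$ follows from local triviality of the twist, $\L$ is separable. Moreover $\L|_{\Gz}$ is the trivial line bundle on $\Gz$, so $C^*(\L|_{\Gz})\cong C_0(\Gz)$ is commutative and hence type I.

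Next I would verify that $\G$ has the Haagerup property. Treating $(\G,\Sigma)$ as the twisted groupoid $C^*$-dynamical system $(\A,\G,\Sigma,\alpha)$ with $\A=\Gz\times\C$ and trivial $\alpha$ (as in Lemma \ref{lem:Haagerup for twisted groupoids}), the canonical expectation is exactly the one given in the statement. By Theorem \ref{FH}, the hypothesis that $C^*_r(\G,\Sigma)$ has the $E$-Haagerup property is equivalent to the twisted system $(\A,\G,\Sigma,\alpha)$ having the Haagerup property. Lemma \ref{lem:Haagerup for twisted groupoids} then identifies this with the Haagerup property of $\G$ itself. All hypotheses of Theorem \ref{uct for crossed product} being now in place for $\L$, both $C^*_r(\G,\Sigma)$ and $C^*(\G,\Sigma)$ satisfy the UCT.

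For the parenthetical assertion that nuclearity of $C^*_r(\G,\Sigma)$ implies the $E$-Haagerup property, I would invoke \cite[Theorem 5.4]{Takeishi} (cited at the end of Section \ref{Haagerup property}), which implies that if the reduced twisted groupoid $C^*$-algebra is nuclear then $\G$ is topologically amenable. By \cite[Lemma 3.5]{Tu} (mentioned in Remark \ref{rem:Haagerup_affine_actions}), amenable groupoids admit a locally proper function of negative type, hence by Proposition \ref{Haagequivgeneral} they have the Haagerup property. Another application of Lemma \ref{lem:Haagerup for twisted groupoids} and Theorem \ref{FH} then yields the $E$-Haagerup property of $C^*_r(\G,\Sigma)$.

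The main obstacle is essentially bookkeeping: making sure the passage from twists to Fell line bundles respects all the relevant structure (separability, continuity, saturation, the identification of unit-fibre algebras, and the canonical expectation). Once these identifications are checked, no further technical work is needed since the deep content is already carried by Theorem \ref{FH} and Theorem \ref{uct for crossed product}, the latter ultimately resting on Tu's Dirac/dual-Dirac arguments together with the Stabilization Theorem of \cite{stabilization}.
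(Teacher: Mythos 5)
Your proposal is correct and follows essentially the same route as the paper: Theorem \ref{FH} together with Lemma \ref{lem:Haagerup for twisted groupoids} yields the Haagerup property of $\G$, the nuclear case is handled via Takeishi's theorem and Remark \ref{rem:Haagerup_affine_actions}, and the conclusion comes from applying Theorem \ref{uct for crossed product} to the associated continuous Fell line bundle. The only difference is that you spell out the routine verification of the hypotheses of Theorem \ref{uct for crossed product} (saturation, separability, type I of the unit-fibre algebra), which the paper leaves implicit.
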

\begin{proof}
	Theorem \ref{FH} and Lemma \ref{lem:Haagerup for twisted groupoids}
	imply that $C_r^*(\G,\Sigma)$ has the $E$-Haagerup  property if and only if $\G$ has the  Haagerup  property.
	When $C_r^*(G,\Sigma)$ is nuclear, then $\G$ is amenable by \cite[Theorem 5.4]{Takeishi} and hence has the Haagerup property by Remark \ref{rem:Haagerup_affine_actions}. 
	If $\G$ has the  Haagerup  property, then treating $(\G,\Sigma)$ as a continuous Fell line bundle over $\G$,  cf.\ Remark \ref{rem:line_bundles_twists},  get  that
	$C_r^*(G,\Sigma)$  and $C^*(G,\Sigma)$ satisfy the UCT by Theorem~\ref{uct for crossed product}.  
\end{proof}

We now present some applications of Proposition \ref{uct for twisted} involving Cartan subalgebras. The first one generalises \cite[Proposition~3.4 and Corollary~3.2]{BL}.

\begin{cor}\label{GammaCartan pair}
	Let $(B,A)$ be a separable  Cartan pair, and let $\Gamma$ be a countable group acting on $B$ such that $\gamma(A)=A$ for every $\gamma\in \Gamma$.  If $ B\rtimes_r \Gamma$ has the the $E$-Haagerup property with respect to the canonical conditional expectation  $E:B\rtimes_r \Gamma \rightarrow  A$ factorising via $B$ (which is automatic when $ B\rtimes_r \Gamma$ is nuclear), then $B\rtimes_r\Gamma$ satisfies the UCT. 
	
	In particular, if $B$ is a   separable $C^*$-algebra which admits a  Cartan subalgebra $A$ and the inclusion $A\subseteq B$ has the Haagerup  property, then $B$ satisfies the UCT.
\end{cor}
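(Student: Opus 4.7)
The plan is to reduce the first (more general) statement to Proposition \ref{uct for twisted} by realising $B\rtimes_r\Gamma$ as the reduced $C^*$-algebra of a twisted \'etale groupoid. First I would invoke Renault's theorem \cite{Re} to choose a twisted, second countable, Hausdorff, \'etale, locally compact groupoid $(\G,\Sigma)$ realising the separable Cartan pair as $B\cong C_r^*(\G,\Sigma)$ and $A\cong C_0(\Gz)$, with canonical faithful conditional expectation $E_0:B\to A$. Since the $\Gamma$-action on $B$ preserves $A$ and hence also preserves $E_0$ (by uniqueness of the expectation for a Cartan pair), functoriality of the Weyl construction lifts it to an action of $\Gamma$ on the twisted groupoid $(\G,\Sigma)$ by automorphisms that respect the circle bundle structure.

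Next I would form the semidirect product twisted groupoid $(\G\rtimes\Gamma,\Sigma\rtimes\Gamma)$. Because $\Gamma$ is countable and discrete while $(\G,\Sigma)$ is second countable, \'etale, Hausdorff, and locally compact, these properties pass to $(\G\rtimes\Gamma,\Sigma\rtimes\Gamma)$, which is again a twist. The expected identification $C_r^*(\G\rtimes\Gamma,\Sigma\rtimes\Gamma)\cong C_r^*(\G,\Sigma)\rtimes_r\Gamma = B\rtimes_r\Gamma$ is a routine iterated-crossed-product computation, and I would verify that under this identification the canonical expectation of the twisted groupoid $C^*$-algebra onto $C_0((\G\rtimes\Gamma)^{(0)})=A$ matches exactly the expectation $E$ in the hypothesis, which factors via $B$.

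Once these identifications are in place, the $E$-Haagerup hypothesis transfers (via Theorem \ref{FH} and Lemma \ref{lem:Haagerup for twisted groupoids}) to the statement that $\G\rtimes\Gamma$ has the Haagerup property; the nuclear case is handled within Proposition \ref{uct for twisted} itself. Applying Proposition \ref{uct for twisted} to the twisted groupoid $(\G\rtimes\Gamma,\Sigma\rtimes\Gamma)$ then yields that $B\rtimes_r\Gamma$ (and its full counterpart) satisfies the UCT.

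For the concluding ``In particular'' sentence I would simply specialise to $\Gamma=\{e\}$: then $B\rtimes_r\Gamma=B$, the expectation $E$ reduces to the canonical $E_0$, and the $E$-Haagerup property becomes precisely the Haagerup property of the Cartan inclusion $A\subseteq B$. A direct alternative is to combine Corollary \ref{cor:Cartan_Haagerup} (to see that the Weyl groupoid has the Haagerup property) with Proposition \ref{uct for twisted}. The main obstacle I anticipate is not conceptually deep but is a bookkeeping issue: carefully checking that lifting the $\Gamma$-action to the twisted Weyl groupoid and passing to the semidirect product twist are compatible with the canonical conditional expectations, so that the $E$-Haagerup hypothesis for $B\rtimes_r\Gamma$ genuinely becomes the hypothesis of Proposition \ref{uct for twisted} for $(\G\rtimes\Gamma,\Sigma\rtimes\Gamma)$.
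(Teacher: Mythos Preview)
Your proposal is correct and follows essentially the same approach as the paper: realise the Cartan pair via Renault's theorem, lift the $\Gamma$-action to the twisted Weyl groupoid, form the semidirect-product twist $(\Gamma\ltimes\G,\Gamma\ltimes\Sigma)$, identify its reduced $C^*$-algebra with $B\rtimes_r\Gamma$ compatibly with the expectations onto $A$, and then apply Proposition~\ref{uct for twisted}; the ``in particular'' is obtained by taking $\Gamma=\{e\}$. The only difference is that the paper outsources the construction and the identification $(C_r^*(\Gamma\ltimes\G,\Gamma\ltimes\Sigma),C_0(\Gz))\cong(B\rtimes_r\Gamma,A)$ to the proof of \cite[Proposition~3.4]{BL}, whereas you sketch these steps yourself.
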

\begin{proof}
	By \cite[Theorem~5.9]{Re} the  pair $(B,A)$ is isomorphic to a pair $(C^*_r(\G,\Sigma)),C_0(\Gz))$
	where $(\G,\Sigma)$ is a twisted \'etale Hausdorff locally compact second countable groupoid.
	In the proof of \cite[Proposition~3.4]{BL} it is shown that $(\Gamma\ltimes \G,\Gamma\ltimes \Sigma)$ is also a twisted \'etale Hausdorff locally compact second countable groupoid and $(C_r^*(\Gamma\ltimes \G,\Gamma\ltimes \Sigma), C_0(\G^{(0)}) )\cong (B\rtimes_r \Gamma, A)$. Hence,  $B\rtimes_r\Gamma$ satisfies the UCT by Proposition~\ref{uct for twisted}.	
	The second statement follows by putting $\Gamma=\{e\}$.
\end{proof}

The second corollary extends  \cite[Corollary~7.4]{BFPR19}.
\begin{cor}
Let $B$ be a  separable $C^*$-algebra and let $A$ be an  abelian subalgebra of $B$. If there exists a discrete abelian group $\Gamma$ such that $(B,A)$ is a $\Gamma$-Cartan pair as defined in \cite[Definition~3.10]{BFPR19} and $B$ has the $\Delta$-Haagerup  property with respect to the canonical conditional expectation $\Delta: B \rightarrow A$, then $B$ satisfies the UCT . 
\end{cor}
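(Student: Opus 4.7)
The plan is to reduce the corollary to an application of Proposition~\ref{uct for twisted} via the twisted groupoid model for $\Gamma$-Cartan pairs established in \cite{BFPR19}. The main structural result from that paper asserts that whenever $(B,A)$ is a separable $\Gamma$-Cartan pair, one can construct an analogue of the Weyl groupoid, yielding a twisted \'etale locally compact Hausdorff second countable groupoid $(\G,\Sigma)$ together with a canonical isomorphism of pairs
\[
(B,A)\;\cong\;\bigl(C^*_r(\G,\Sigma),\,C_0(\Gz)\bigr),
\]
where under this isomorphism the canonical expectation $\Delta: B\to A$ coincides with the canonical conditional expectation $E:C^*_r(\G,\Sigma)\to C_0(\Gz)$ coming from restriction of sections to the unit space.

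Assuming this structural identification, the hypothesis that $B$ has the $\Delta$-Haagerup property translates directly into $C^*_r(\G,\Sigma)$ having the $E$-Haagerup property. By Theorem~\ref{FH} applied to the twisted action $(\G,\Sigma)$ viewed as a twisted action on the trivial line bundle (Lemma~\ref{lem:Haagerup for twisted groupoids}), this in turn is equivalent to $\G$ itself having the Haagerup property in the sense of Definition~\ref{def:Haagerup_for_twisted_actions}. Equivalently, by Proposition~\ref{Haagequivgeneral}, $\G$ admits a locally proper negative type function. Once this is established, Proposition~\ref{uct for twisted} applies verbatim to conclude that $C^*_r(\G,\Sigma)\cong B$ satisfies the UCT.

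The main obstacle, and the only genuinely delicate point, is verifying that the BFPR construction does yield a twisted groupoid with exactly the regularity required to invoke Proposition~\ref{uct for twisted}: namely, that $\G$ is \'etale, Hausdorff, locally compact and second countable, and that the resulting conditional expectation really agrees with $\Delta$. Separability of $B$ gives second countability, and the discreteness of $\Gamma$ together with the $\Gamma$-grading produces an \'etale structure; Hausdorffness of $\G$ should follow from the standard MASA-type normalisers being closed in the groupoid topology under the $\Gamma$-Cartan axioms. The compatibility between $\Delta$ and $E$ is essentially built into the BFPR realisation, since both expectations are characterised by restriction to the zero-grade subalgebra $B_e=A$. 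Once these structural facts are quoted from \cite{BFPR19}, the rest of the argument is a direct application of the machinery developed in Sections~\ref{Haagerup property} and~\ref{Sect:UCT}.
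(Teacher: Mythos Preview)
Your proposal is correct and follows essentially the same approach as the paper: cite the structural theorem from \cite{BFPR19} (specifically Theorem~4.36 there) to identify $(B,A)$ with $(C^*_r(\G,\Sigma),C_0(\Gz))$ for a twisted \'etale Hausdorff locally compact second countable groupoid, and then apply Proposition~\ref{uct for twisted}. Your additional unpacking via Theorem~\ref{FH}, Lemma~\ref{lem:Haagerup for twisted groupoids}, and Proposition~\ref{Haagequivgeneral} is already contained in the proof of Proposition~\ref{uct for twisted}, so it is redundant but harmless.
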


\begin{proof}
It follows from \cite[Theorem~4.36]{BFPR19} that $(B,A)\cong (C_r^*(\G,\Sigma),C_0(\G^{(0)}))$ for a twisted \'etale Hausdorff locally compact second countable groupoid $(\G,\Sigma)$. Hence, $B\cong C_r^*(\G,\Sigma)$ satisfies the UCT by Proposition~\ref{uct for twisted}. 
\end{proof}

We finish by a remark concerning the case where we can deduce the UCT property for all the quotients of a given $C^*$-algebra.
The following corollary applies  to the reduced groupoid $C^*$-algebra  of a second countable locally compact Hausdorff 
\'etale groupoid   which is residually topologically principal and inner exact, see for instance \cite[Corollary 3.12]{BL18}.
\begin{cor} 
	Let $A\subseteq B$ be a Cartan inclusion where $B$ is separable and 
	$A$ separates ideals in $B$, i.e.\ the map $J\mapsto J\cap A$ defined on ideals of $B$ is injective. 
	If the inclusion $A\subseteq B$ has the Haagerup  property then all quotients of $B$ satisfy  the UCT.
\end{cor}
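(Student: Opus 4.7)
The plan is to reduce every quotient $B/J$ to a Cartan pair of the form $(C_r^*(\G|_F, \Sigma|_F), C_0(F))$ for an appropriate closed invariant subset $F$ of the Weyl unit space, and then apply Corollary \ref{GammaCartan pair} (with trivial acting group) to it. Since $B$ is separable, Renault's theorem identifies $(B, A)$ with $(C_r^*(\G, \Sigma), C_0(\Gz))$ for a twist $(\G, \Sigma)$ over an \'etale locally compact Hausdorff second countable groupoid $\G$; by Corollary \ref{cor:Cartan_Haagerup} the Haagerup property of the inclusion transfers to $\G$.

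Given an ideal $J$ of $B$, I would set $I := J \cap A = C_0(U)$ for some open $U \subseteq \Gz$. Since $A$ is regular in $B$, its normalisers generate $B$ and act on $A$ via the natural $\G$-action on $\Gz$; hence two-sidedness of $J$ forces $U$ to be $\G$-invariant, and $F := \Gz \setminus U$ to be closed and $\G$-invariant. Invariance of $F$ gives $(\G|_F)_x = \G_x$ for every $x \in F$, so the restriction of sections is a surjective $*$-homomorphism $C_c(\G, \Sigma) \onto C_c(\G|_F, \Sigma|_F)$, and the regular representations $\pi_x$ of $B$ for $x \in F$ (see \eqref{pix}) factor through $C_r^*(\G|_F, \Sigma|_F)$; restriction thus extends to a surjection $\pi \colon B \onto C_r^*(\G|_F, \Sigma|_F)$. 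Both $\ker \pi$ and $J$ meet $A$ in $C_0(U)$, so the hypothesis that $A$ separates ideals forces $\ker \pi = J$, whence $B/J \cong C_r^*(\G|_F, \Sigma|_F)$ with Cartan subalgebra $C_0(F)$.

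To finish, restrictions to $\G|_F$ of functions witnessing the Haagerup property of $\G$ (in the form of Proposition \ref{Haagequivgeneral}\,(\ref{enu:Haagequivgeneral2})) inherit all four required properties: continuity and normalisation are immediate, positive-definiteness is preserved because $(\G|_F)_x = \G_x$ for $x \in F$, uniform convergence to $1$ on compacts is automatic (compacts of $\G|_F$ are compacts of $\G$), and the local $C_0$ condition uses $(\G|_F)_K^K = \G_K^K$ for every compact $K \subseteq F$. Hence $\G|_F$ has the Haagerup property, the Cartan inclusion $(B/J, A/I)$ inherits it by Corollary \ref{cor:Cartan_Haagerup}, and Corollary \ref{GammaCartan pair} delivers the UCT for $B/J$.

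The hard part will be the identification $B/J \cong C_r^*(\G|_F, \Sigma|_F)$: in general this requires inner exactness of $\G$, but here the hypothesis that $A$ separates ideals plays exactly that role, forcing $\ker \pi$ and $J$ to coincide as they are both ideals of $B$ whose intersection with $A$ equals $C_0(U)$.
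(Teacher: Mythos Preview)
Your argument follows essentially the same route as the paper's proof: identify the pair with a twisted groupoid $C^*$-algebra, use separation of ideals to match every ideal $J$ with the kernel of the restriction map onto $C_r^*(\G|_F,\Sigma|_F)$, verify that the Haagerup property descends to $\G|_F$, and conclude the UCT. The paper outsources the ideal-structure step and the exactness of the restriction sequence to \cite{BartoszRalf4} and \cite{Lalonde}, while you give a direct argument; both are fine.

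There is, however, a genuine slip in your final sentence. You assert that $(B/J,A/I)\cong(C_r^*(\G|_F,\Sigma|_F),C_0(F))$ is a Cartan pair and then route the conclusion through Corollaries~\ref{cor:Cartan_Haagerup} and~\ref{GammaCartan pair}. This need not hold: the Weyl groupoid $\G$ is topologically principal, but the restriction $\G|_F$ to a closed invariant set $F$ need not be (the units with trivial isotropy are dense in $\Gz$, not necessarily in $F$), so $C_0(F)$ may fail to be maximal abelian in the quotient. The paper avoids this by not claiming a Cartan structure on the quotient at all: once you know $\G|_F$ has the Haagerup property, apply Theorem~\ref{uct for crossed product} (equivalently Proposition~\ref{uct for twisted}) directly to the twisted groupoid $(\G|_F,\Sigma|_F)$. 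Replace your last sentence with that direct appeal and the proof is complete.
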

\begin{proof} 
	By \cite{Re} and Remark \ref{rem:line_bundles_twists} we may identify $(B,A)$ with $(C^*_r(\LL),C_0(\Gz))$
	for a continuous Fell line bundle $\LL$ over $\G$. 
	If  $C_0(\Gz)$ separates the ideals of $C_r^*(\LL)$, then every ideal in $C_r^*(\LL)$ is generated by 
	a $\G$-invariant ideal in $C_0(\Gz)$, see for instance \cite[Propositions  2.11, 6.9]{BartoszRalf4}. 
	Hence every ideal in $C_r^*(\LL)$ is generated by   $C_0(\Gz\setminus D)$ where $D$ is a closed \(\G\)-invariant set.
	This implies that the sequence 
	$$
	0 \to C_0(\LL|_{X\setminus D}) \to C_r^*(\LL) \to C_r^*(\LL|_D)\to 0, 
	$$
	which exists by \cite[Propositions 4.2, 4.3]{Lalonde}, is exact. 
	Thus every quotient of $C_r^*(\LL)$ is of the form $C_r^*(\LL|_D)$.	
	If $C_0(\Gz)\subseteq C^*_r(\LL)$ has the Haagerup  property, then by Theorem~\ref{FH}, Lemma~\ref{lem:Haagerup for twisted groupoids} and Proposition~\ref{Haagequivgeneral}, $G$ has the Haagerup  property. So does the closed subgroupoid $G|_D$ and the UCT of $C_r^*(G|_D,\Sigma|_D)$ follows from Theorem~\ref{uct for crossed product}.  
\end{proof}

\subsection*{Acknowledgements}
AS was partially supported by the National Science Center (NCN) grant no.~2014/14/E/ST1/00525. KL has received funding from the European Research Council (ERC) under the European Union's Horizon 2020 research and innovation programme (grant agreement no. 677120-INDEX). BKK was supported by the National Science Center (NCN) grant no.~2019/35/B/ST1/02684. We thank the referee for a careful reading of our manuscript and many useful comments.

\end{document}